\documentclass[12pt]{amsart}
\usepackage{amssymb,latexsym,amsmath,enumitem,mathrsfs,geometry}
\usepackage{thmtools}
\usepackage{fullpage}
\usepackage{fancyhdr}
\usepackage{xcolor}
\usepackage{bbm}
\usepackage{stmaryrd}
\usepackage{mathrsfs}
\usepackage{hyperref}
\usepackage{lineno}
\usepackage{diagbox}
\usepackage{array}
\usepackage{tikz}
\usetikzlibrary{arrows}
\usetikzlibrary{positioning, arrows.meta, shapes.geometric}
\usepackage{float}
\usepackage{comment}
\usepackage{mathtools}
\usepackage{amsthm}
\usepackage{cleveref}

\newtheorem{theorem}{Theorem}[section]
\newtheorem*{theorem*}{Theorem}

\newtheorem{lemma}[theorem]{Lemma}
\newtheorem*{remark*}{Remark}

\newtheorem{corollary}[theorem]{Corollary}
\newtheorem{proposition}[theorem]{Proposition}

\newtheorem{remark}[theorem]{Remark}
\newtheorem{example}[theorem]{Example}

\newcommand{\eps}{\varepsilon}
\newcommand{\R}{\mathbb{R}}
\newcommand{\C}{\mathbb{C}}

\newcommand{\N}{\mathbb{N}}
\newcommand{\Z}{\mathbb{Z}}
\renewcommand{\Im}{\mathrm{Im}}
\renewcommand{\Re}{\mathrm{Re}}
\newcommand{\sinc}{\operatorname{sinc}}
\newcommand{\Poisson}{\mathcal{P}}

\begin{document}

\numberwithin{equation}{section}

\title{Local Bernstein theory, and lower bounds for Lebesgue constants}

\author{Terence Tao}
\address{UCLA Department of Mathematics, Los Angeles, CA 90095-1555.}
\email{tao@math.ucla.edu}

\keywords{}
\subjclass[2020]{41A05, 42A15, 30D15}
\thanks{}

\date{\today}

\begin{abstract} Classical (or ``global'') Bernstein theory establishes sharp control on entire functions of exponential type that are bounded and real-valued on the real axis.  We localize some of this theory to rectangular regions $\{ x+iy: x \in I, 0 \leq y \leq y_0 \}$, showing that Bernstein-type bounds with acceptable errors can continue to hold for functions holomorphic in such rectangles, bounded and real-valued on the lower edge of the rectangle, at most exponentially large on the upper edge, and at most double exponentially large on the vertical sides.

As a consequence of these bounds, we are able to localize the Erd\H{o}s lower bound  $\sup_{x \in [-1,1]} \lambda(x) \geq \frac{2}{\pi} \log n - O(1)$ on the Lebesgue constant of interpolation on $C([-1,1])$ to shorter intervals $I$ than $[-1,1]$, answering a question of Erd\H{o}s and Tur\'an.  By using suitably weighted versions of the residue theorem, we also obtain the asymptotically sharp lower bound $\int_I \lambda(x)\ dx \geq \frac{4|I|}{\pi^2} \log n - o(\log n)$ for integral variants of such constants, answering a further question of Erd\H{o}s.
\end{abstract}

\maketitle

\section{Introduction}

\subsection{Global Bernstein theory}

For any $\lambda>0$, define the \emph{Bernstein space} ${\mathcal B}^\infty_\lambda$ to be the space of entire functions $f$ which is of exponential type $\lambda$ in the sense that
\begin{equation}\label{cbound}
    |f(z)| \leq C e^{\lambda |z|}
\end{equation}
for some $C>0$ and all\footnote{Throughout this paper, $z$ (or $z'$, $z_1$, etc.) is understood to denote a complex number, while $x, y, s, t$ (or $x'$, etc.) is understood to denote a real number.} $z \in \C$ (so in particular they are entire of order $1$), and obeys the $L^\infty$ hypothesis\footnote{In the literature one also studies Bernstein spaces with other $L^p$ hypotheses on the real line, such as $L^2$ (in which case the spaces are also known as Paley--Wiener spaces), but we will not do so here.}
\begin{equation}\label{rbound}
    |f(x)| \leq A
\end{equation}
for some $A>0$ and all $x \in \R$; this is a normed vector space, with $\|f\|_{{\mathcal B}^\infty_\lambda}$ equal to the least $A$ for which \eqref{rbound} holds (i.e., the $L^\infty(\R)$ norm of $f$).  We define the \emph{real-valued Bernstein space} ${\mathcal B}^{\infty,\R}_\lambda$ to be the (real) subspace of ${\mathcal B}^\infty_\lambda$ consisting of those functions $f$ which are real-valued on the real axis, or equivalently (by the Schwartz reflection principle) obey the symmetry
\begin{equation}\label{fsymm}
f(\overline{z}) = \overline{f(z)}
\end{equation}
for all $z \in \C$, with the inherited norm $\|f\|_{{\mathcal B}^{\infty,\R}_\lambda} = \|f\|_{{\mathcal B}^\infty_\lambda}$. The spaces ${\mathcal B}^{\infty,\R}_\lambda$ have the structure of a filtered algebra, in the sense that the spaces ${\mathcal B}^{\infty,\R}_\lambda$ are increasing in $\lambda$, and the product of an element of ${\mathcal B}^{\infty,\R}_\lambda$ and an element of ${\mathcal B}^{\infty,\R}_{\lambda'}$ is an element of ${\mathcal B}^{\infty,\R}_{\lambda+\lambda'}$.

\begin{remark}
In the spirit of the Paley--Wiener theorem, one can think of ${\mathcal B}^\infty_\lambda$ as morally consisting of those elements of $L^\infty(\R)$ whose distributional Fourier transform $\hat f(\xi) \coloneqq \int_\R f(x) e^{-2\pi i \xi x}$ is supported on the interval $[-\frac{\lambda}{2\pi}, \frac{\lambda}{2\pi}]$ and obeys some technical regularity condition at the endpoints of this interval; the requirement of being real on $\R$ then translates to the symmetry $\hat f(-\xi) = \overline{\hat f(\xi)}$.  However, we will not explicitly adopt this Fourier-analytic perspective in this paper.
\end{remark}

We recall some key elements of the class ${\mathcal B}^{\infty,\R}_\lambda$:

\begin{example}[Sinusoids and sinc functions]\label{sinu-ex}  For a \emph{frequency} $\lambda > 0$, \emph{amplitude} $A > 0$ and \emph{phase} (or \emph{central point}) $x_0 \in \R$, the \emph{sinusoid}
\begin{equation}\label{acos}
    A \cos(\lambda (z - x_0))
\end{equation}
and \emph{rescaled sinc function}
\begin{equation}\label{asinc}
    A \sinc(\lambda (z - x_0))
\end{equation}
are elements of ${\mathcal B}^{\infty,\R}_\lambda$, where $\sinc(z) \coloneqq \sin(z)/z$ with the singularity at the origin removed.  As we shall see, the sinusoids are ``extremal'' elements of ${\mathcal B}^{\infty,\R}_\lambda$, while the rescaled sinc functions are useful multipliers that can partially localize elements of ${\mathcal B}^{\infty,\R}_\lambda$ to the region $x = x_0 + O(1/\lambda)$ without significantly increasing the exponential type of the function.
\end{example}

\begin{example}[Trigonometric polynomial]\label{poly-ex} For any natural number $n$ and real coefficients $a_0,\dots,a_n$ and $b_1,\dots,b_n$, the \emph{trigonometric polynomials}
\begin{equation}\label{trig}
 P(z) = a_0 + \sum_{j=1}^n a_j \cos(jz) + b_j \sin(jz)
\end{equation}
of degree (at most) $n$ are precisely the elements of ${\mathcal B}^{\infty,\R}_n$ that are $2\pi$-periodic. Thus, one can think of elements of ${\mathcal B}^{\infty,\R}_\lambda$ as ``generalized trigonometric polynomials'' of ``degree'' at most $\lambda$, which are not required to have any periodicity properties. Conversely, one can view the space of trigonometric polynomials of a given degree as a ``toy model'' for the Bernstein class ${\mathcal B}^{\infty,\R}_\lambda$.  We remark that the leading coefficients $a_n,b_n$ of such a polynomial determine the asymptotic behavior away from the real axis; for instance one has the asymptotic approximation
\begin{equation}\label{P-approx}
 P(x+iy) \approx \frac{a_n-ib_n}{2} e^{ny} e^{-inx}
\end{equation}
when $y$ is large and positive.
\end{example}

We shall informally refer to the body of results controlling the behavior of a function $f$ (or its derivative, zeroes, etc.) in a Bernstein space ${\mathcal B}^\infty_\lambda$ or ${\mathcal B}^{\infty,\R}_\lambda$ as \emph{global Bernstein theory} (to make a distinction between the local Bernstein theory we shall introduce later).  As a first example of a result in this global theory, one can interpolate the bounds \eqref{cbound}, \eqref{rbound} using a standard application of the Phragm\'en--Lindel\"of principle to obtain the unified bound
\begin{equation}\label{pl}
        |f(z)| \leq A e^{\lambda |\Im z|}.
\end{equation}
But with the additional requirement of being real-valued on $\R$, significantly more precise bounds (often with sharp constants) are available:

\begin{theorem}[Global Bernstein theory]\label{ds}  Let $\lambda, A > 0$, and let
$f \in {\mathcal B}^{\infty,\R}_\lambda$ with $\|f\|_{{\mathcal B}^{\infty,\R}_\lambda} \leq A$.
\begin{itemize}
    \item[(i)] (Bernstein inequality \cite{bern2}) For all real $x$, one has\footnote{Bernstein's result holds even if $f$ is not real-valued on $\R$.} $|f'(x)| \leq A \lambda$.
    \item[(ii)] (Boas \cite{boas}) In fact, for all real $x$, one has $\left|f(x) + \frac{i f'(x)}{\lambda}\right| \leq A$, or equivalently $f'(x)^2 + \lambda^2 f(x)^2 \leq \lambda^2 A^2$.
    \item[(iii)]  (Duffin--Schaeffer \cite{duffin})  For any complex number $x+iy$, one has $|f(x+iy)| \leq A \cosh(\lambda y)$.  If $y \neq 0$, then equality can only occur if $f$ is a sinusoid.
    \item[(iv)]  (Linear growth of zeroes) If $x_0$ is real with $f(x_0) \neq 0$, then for any $r>0$, the number of zeroes of $f$ in the disk
    $$D(x_0,r) \coloneqq \{ z: |z-x_0| < r \}$$
     is $O(1+\lambda r + \log \frac{A}{|f(x_0)|})$.
    \item[(v)]  (H\"ormander \cite{hormander}) If $x_0$ is real with $f(x_0) = \pm A$, then for all $x$ in the interval $(x_0 - \frac{\pi}{\lambda}, x_0 + \frac{\pi}{\lambda})$, one has $\pm f(x) \geq A \cos(\lambda (x-x_0))$.  If $x \neq x_0$, then equality can only occur if $f$ is a sinusoid.
    \item[(vi)]  (Zero spacing near maximum) If $x_0$ is real and $|f(x_0)|=A$, then there are no zeroes of $f$ in the interval $[x_0 - \frac{\pi}{2\lambda}, x_0 + \frac{\pi}{2\lambda}]$ except possibly at the endpoints, and the latter can only occur if $f$ is a sinusoid.
\end{itemize}
\end{theorem}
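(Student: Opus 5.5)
The approach is to make (ii) the engine and derive most of the other parts from it, using only the Phragm\'en--Lindel\"of bound \eqref{pl} and the real symmetry \eqref{fsymm}.

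\textbf{Step 1: (ii) and (i).} For (ii) we use the classical Duffin--Schaeffer comparison with sinusoids. By scaling and translation we may take $\lambda=1$, $A=1$ and $x=0$, and we may assume $\|f\|_{L^\infty(\R)}<1$. Suppose for contradiction that $|f(0)+if'(0)|>1$. Then we can choose $\alpha$ with $f(0)=\rho\cos\alpha$ and $f'(0)=-\rho\sin\alpha$ for some $\rho>1$, and set $g(z)=\cos(z+\alpha)$. The difference $h=g-f/\rho$ lies in ${\mathcal B}^{\infty,\R}_1$, and on the real line it alternates in sign at the extremal points of $g$, since $|f/\rho|<1$ there. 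This gives at least one zero of $h$ in each interval between consecutive extrema. In addition, $h$ vanishes to second order at $0$. A zero count on a long interval $[-N\pi,N\pi]$ then exceeds what a function of type $1$ allows, namely $2N+O(1)$ zeroes by the Jensen/argument-principle count, or more precisely the bound for $\cos z-\epsilon\,\text{(perturbation)}$. This is the contradiction. Taking the real part of (ii) gives (i) for real-valued $f$.

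\textbf{Step 2: (iii), (v) and (vi).} We apply the same comparison principle with translated and dilated sinusoids $A\cos(\lambda(z-x_0))$. For (v), suppose $\pm f(x)<A\cos(\lambda(x-x_0))$ at some point of the interval. Then $A\cos(\lambda(\cdot-x_0))\mp f$ has a double zero at $x_0$ and an extra sign change, which again forces too many real zeroes. (vi) follows immediately from (v), because $\cos>0$ on $(-\pi/2,\pi/2)$. For (iii), the standard route is to observe that for fixed $y$ the function $x\mapsto f(x+iy)+f(x-iy)$ is again real-valued and of type $\lambda$. One then shows $|f(x+iy)|^2\le A^2\cosh^2(\lambda y)$ by writing $f(x+iy)=\sum_k f^{(k)}(x)(iy)^k/k!$ and pairing the even and odd terms via (ii), applied iteratively to $f^{(k)}$; by (i) we have $f^{(k)}\in{\mathcal B}^{\infty,\R}_\lambda$ with norm at most $A\lambda^k$. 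Pure term counting only gives $e^{\lambda|y|}$, so (ii) must be used to pair each even derivative with the adjacent odd one. The equality cases come from tracking where the zero count is tight.

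\textbf{Step 3: (iv).} This is Jensen's formula applied in $D(x_0,2r)$, using \eqref{pl} to bound $\log|f|\le \log A+2\lambda r$ on the circle. The number of zeroes in $D(x_0,r)$ is then at most $(\log 2)^{-1}\bigl(2\lambda r+\log\frac{A}{|f(x_0)|}\bigr)$.

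\textbf{Main obstacle.} The key difficulty is the rigorous zero-counting step in Step 1: showing that a nonzero element of ${\mathcal B}^{\infty,\R}_1$ interlacing with $\cos$ cannot have more real zeroes than $\cos$ on long intervals. To handle this, we first replace $f$ by $(1-\epsilon)f(z)\sinc(\epsilon z)$, which has type $1+\epsilon$ and decays on $\R$. This lets us run the argument principle on the boundary of a large rectangle and compare directly with the zero count of $\cos((1+\epsilon)z+\alpha)$, and then let $\epsilon\to0$.
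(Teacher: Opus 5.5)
Your Step 1 is sound and is essentially the Duffin--Schaeffer argument. The paper only cites the literature for (i)--(iii), but its proof of the local analogue in \Cref{boas-sec} is exactly your damped comparison: a $\sinc$ damping, and an \emph{exact} Rouch\'e count of $2N$ zeros in a rectangle whose vertical sides pass through extrema of the comparison cosine, played against the intermediate value theorem. A $2N+O(1)$ count would not suffice, and (i) is the imaginary rather than the real part of (ii). Your (iv) is the paper's argument.

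\textbf{The genuine gap is (iii).} Expanding $f(x+iy)=\sum_k f^{(k)}(x)(iy)^k/k!$, all that (ii) applied to the $f^{(k)}$ gives you is the pairwise constraint $f^{(k)}(x)^2+\lambda^{-2}f^{(k+1)}(x)^2\le A^2\lambda^{2k}$. You give no mechanism turning this into the sharp bound. The natural one is to estimate each pair $k=2m,2m+1$ by (ii) and sum, which gives $A\sum_m\frac{(\lambda|y|)^{2m}}{(2m)!}\max\bigl(1,\frac{\lambda|y|}{2m+1}\bigr)$. This is strictly larger than $A\cosh(\lambda y)$ once $\lambda|y|>1$, since already the $m=0$ pair may contribute $A\lambda|y|$. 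Moreover $A\cosh(\lambda y)-\frac A2e^{\lambda|y|}=\frac A2e^{-\lambda|y|}$, so any repair would need a delicate global optimization over all $k$, which is not sketched. Nor does the observation that $x\mapsto f(x+iy)+f(x-iy)$ is real of type $\lambda$ help, since its sup norm is exactly what (iii) asserts.

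The fix is to extract more from Step 1. In your normalization $A=\lambda=1$, with $f_\eps(z)=(1-\eps)f(z)\sinc(\eps z)$, the same Rouch\'e-versus-IVT count shows that for every $\theta\in\R$ and $0<\alpha<1$, \emph{all} zeros of $\cos((1+\eps)z-\theta)-\alpha f_\eps(z)$ in the rectangle are real and simple. Hence for $y\neq0$ the point $\alpha f_\eps(x+iy)$ never lies on the ellipse $\{\cos((1+\eps)(x+iy)-\theta):\theta\in\R\}$, which has semi-axes $\cosh((1+\eps)y)$ and $|\sinh((1+\eps)y)|$. So $|f_\eps(x+iy)|\le\cosh((1+\eps)y)$, and letting $\eps\to0$ gives (iii). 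This is precisely how the paper obtains \eqref{lds}.

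\textbf{A secondary problem is (v), and through it (vi).} Your zero-counting sketch for (v) does not close as written. Because $|f|\le A$ is not strict, $A\cos(\lambda(\cdot-x_0))\mp f$ alternates only weakly at the extrema of the cosine. The perturbation $f\mapsto\alpha f_\eps$ that would make the alternation strict also removes the double zero at $x_0$, so no surplus zero is produced.

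The paper instead derives (v) in a few lines from (ii). Normalize $A=\lambda=1$, $x_0=0$, $f(0)=1$, and write $f=\cos g$ with $g$ continuous and $g(0)=0$. Then (ii) reads $(g')^2\sin^2 g\le\sin^2 g$, so $|g'|\le 1$ away from $f=\pm1$. Hence $|g(x)|\le|x|$, and $f(x)\ge\cos x$ for $|x|<\pi$. Equality at some $x\neq0$ forces $g(t)=\pm t$ on an interval, so $f=\cos$ there and hence everywhere. That equality case is exactly what (vi) needs to handle zeros at the endpoints $x_0\pm\frac{\pi}{2\lambda}$. Your appeal to ``tracking where the zero count is tight'' does not supply it. For (iii), your proposed argument involves no zero count at all, so its equality clause is also left unaddressed.
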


\begin{proof}  For (i)-(iii), see the cited references.
Now we prove (iv).  From part (iii) (or \eqref{pl}), we have $|f(z)| \leq A e^{2\lambda r}$ for $z$ in the disk $D(x_0, 2r)$.  The claim now follows from Jensen's formula.

    For (v), we can rescale $A=\lambda=1$, $x_0=0$, and $f(0)=1$.  If we write $f(x) = \cos(g(x))$ for a continuous $g(x)$ with $g(0)=0$ that is smooth away from the solutions to $f(x) = \pm 1$, then from (ii) and the chain rule we have $|g'(x)| \leq 1$ for all $x$ with $f(x) \neq \pm 1$.  Integrating, we obtain $|g(x)| \leq |x|$ for all $x$, giving the first part of (v).  If equality occurs for any non-zero $x$ in this region, then $f(z)$ equals $\cos(z)$ on some non-trivial interval, and hence identically by analytic continuation, giving the second part of (v); and (vi) is an immediate corollary of (v).
\end{proof}

\begin{remark} One can improve (iv) asymptotically by showing that the number of zeroes is at most $(1+o(1)) \frac{2\lambda r}{\pi}$ as $r \to \infty$ by the Cartwright--Levinson theorem; see e.g., \cite{levin}.  In the special case of trigonometric polynomials (\Cref{poly-ex}), the bound (i) is a very well known inequality of Bernstein \cite{bernstein}, the sharper bound (ii) was established independently by Szeg\"o \cite{szego} and Schaake and van der Corput \cite{vdc}, the bound (iii) was established by Bernstein \cite{bern3}, and the result (vi) was obtained by Riesz \cite{riesz}.
\end{remark}

A unified treatment of these results was given\footnote{See also \cite{rahman} for an alternate approach.} by Duffin and Schaeffer \cite{duffin}, based primarily on applying both Rouch\'e's theorem and the intermediate value theorem to functions of the form
$$ A \cos(\lambda(z-x_0)) - \alpha f_\eps(z)$$
for $x_0 \in\R$ and $\eps>0$, where $f_\eps$ is a slightly dampened and rescaled version of $f$ and $0 < \alpha < 1$, and comparing the information about zeroes produced by these two theorems.  This argument matches well with the intuition that elements of ${\mathcal B}^{\infty,\R}_\lambda$ are ``dominated'' by sinusoids in various senses.

\subsection{Local Bernstein theory}

At first glance, Bernstein theory seems like an inappropriate tool to study the behavior of ordinary polynomials (rather than trigonometric polynomials), since such polynomials grow far slower than exponentially at infinity, and also cannot be globally bounded on the real line unless they are constant.  However, as we shall see later, when studying high-degree real polynomials $P$ which enjoy reasonable upper and lower bounds on a moderately large interval, these polynomials exhibit Bernstein class (or sinusoidal) type behavior \emph{locally}, for instance on long thin rectangles of the form
\begin{equation}\label{riy-0}
 R(I, y_0) \coloneqq \{ x+iy: x \in I; -y_0 \leq y \leq y_0 \}
\end{equation}
or the upper half
\begin{equation}\label{riy}
 R^+(I, y_0) \coloneqq \{ x+iy: x \in I; 0 \leq y \leq y_0 \}
\end{equation}
of such rectangles, where $I$ is an interval where good upper and lower bounds on $P$ exist, and $y_0>0$ is much smaller than the length of $|I|$.  One precise formalization of this intuition is given in \Cref{prelim} below; another key family of examples to keep in mind are the Chebyshev polynomials, as depicted in \Cref{fig-cheby}.  Because of this phenomenon, it becomes of interest to localize Bernstein theory to rectangles such as \eqref{riy}.  Montel's theorem suggests that such localization ought to be possible at a qualitative level at least; but for applications it is important to obtain local results with quantitative error terms.

It turns out that the Rouch\'e theorem (and intermediate value theorem) arguments of Duffin and Schaeffer are very amenable to localization to such rectangles, though of course some error terms in the estimates must necessarily appear as a consequence.  In \Cref{boas-sec} we establish the following local version of \Cref{ds}:

\begin{theorem}[Local Bernstein theory]\label{main-2}  Let $f$ be a holomorphic function on a rectangle $R^+(I,y_0)$.  Let $A, \lambda, L > 0$, and suppose one has the following bounds:
\begin{itemize}
\item[(a)] (Lower edge) $f(x)$ is real with $|f(x)| \leq A$ for all $x \in I$.
\item[(b)] (Upper edge) $|f(x+iy_0)| \leq A e^{\lambda y_0}$ for all $x \in I$.
\item[(c)] (Vertical edges) $|f(x+iy)| \leq A e^{\lambda y + e^{\frac{\pi L}{4y_0}}}$ for all $x \in \partial I$ and $0 \leq y \leq y_0$.
\end{itemize}
Let $x \in I$ with $\mathrm{dist}(x, \partial I) \geq L$.
\begin{itemize}
\item[(i)]  (Local Bernstein inequality)  One has
\begin{equation}\label{fxa}
 |f'(x)| \leq A \lambda \left( 1 + O( e^{-\frac{\pi L}{4y_0}}) + O\left(\frac{1}{\lambda \min(y_0,L)}\right) \right).
\end{equation}
\item[(ii)]  (Local Boas inequality)  In fact, one has the stronger inequality
\begin{equation}\label{la}
 \left|f(x) + \frac{i f'(x)}{\lambda}\right| \leq A \left( 1 + O\left( e^{-\frac{\pi L}{4y_0}}\right) + O\left(\frac{1}{\lambda \min(y_0,L)}\right) \right).
\end{equation}
\item[(iii)]  (Local Duffin--Schaeffer inequality) For any $0 \leq y \leq y_0$, one has
\begin{equation}\label{lds}
 |f(x+iy)| \leq A \left( 1 + O( e^{-\frac{\pi L}{4y_0}}) \right) \cosh \left( \left( 1 + O\left(\frac{1}{\lambda \min(y_0,L)}\right) \right) \lambda y \right).
\end{equation}
\item[(iv)]  (Local linear growth of zeroes) For $0 < r < \min(y_0,L)/4$ and $f(x) \neq 0$, then the number of zeroes of $f$ in the disk $D(x,r)$ is at most $O( 1 + \lambda r + \log \frac{A}{|f(x)|})$.
\end{itemize}
\end{theorem}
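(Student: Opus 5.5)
The plan is to localize the Duffin--Schaeffer argument. The key device is a harmonic-measure (Phragm\'en--Lindel\"of) estimate on the rectangle that neutralizes the double-exponential vertical-edge bound (c). Fix $x_0 \in I$ with $\mathrm{dist}(x_0,\partial I)\ge L$.

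\emph{Step 1: a damping multiplier.} I will work with an auxiliary holomorphic function such as
$$ G(z) = \exp\Big(-\delta \cosh\big(\tfrac{\pi (z-x_0)}{2y_0}\big)\Big), $$
with $\delta = e^{-\frac{\pi L}{4y_0}}$ (or a variant adapted to the strip $0\le \Im z\le y_0$, e.g.\ shifted by $iy_0/2$). On the strip, $\Re\cosh(\pi(z-x_0)/2y_0)$ is comparable to $e^{\pi|x-x_0|/2y_0}$. At the vertical edges $|x-x_0|\ge L$, so this quantity is at least $e^{\pi L/2y_0}$, and $|G|\le \exp(-e^{\pi L/4 y_0})$. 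That beats the bound in (c). Near $x_0$, with $|x-x_0| \lesssim y_0$, we instead have $G = 1+O(\delta)$.

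\emph{Step 2: Rouch\'e / intermediate value argument.} Consider $F = fG$, or rather $f_\eps G$, where $f_\eps$ is a slight dampening of $f$. It is essentially bounded by $A(1+O(\delta))$ on $I$, is real up to a factor $1+O(\delta)$ in modulus, and satisfies $|F(x+iy_0)| \le Ae^{\lambda y_0}$ on the top edge. Mimicking Duffin--Schaeffer, compare $F$ with the sinusoid $S(z)=A'\cos(\lambda'(z-t))$, where $A' = A(1+O(\delta))$ and $\lambda' = \lambda(1+O(1/(\lambda\min(y_0,L))))$ are chosen slightly larger than needed.

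On the boundary of a subrectangle $R^+([x_0-L',x_0+L'],y_0)$ we want $|S| > \alpha|F|$ for some $\alpha<1$:
\begin{itemize}
\item on the top edge, this holds because $\cosh(\lambda' y_0)$ exceeds $e^{\lambda y_0}$ once $\lambda' - \lambda \gtrsim 1/y_0$;
\item on the sides, it holds because $G$ kills $f$ there;
\item on the bottom edge, it holds only in the sense of counting sign changes.
\end{itemize}
Rouch\'e's theorem on the rectangle (or more precisely, the argument principle applied to its closed upper/lower reflection) shows that $S-\alpha F$ has the same number of zeros in the region as $S$. The intermediate value theorem shows that all of those zeros already lie on the real segment and are simple. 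This yields the real-axis conclusion: $S'$ and $F'$ cannot be ``out of phase'' at $x_0$. Choosing $t$ so that $S(x_0)=\alpha F(x_0)$ then gives $|F'(x_0)| \le |S'(x_0)|$, and optimizing $t$ gives the Boas form (ii). Then (i) follows since $|f'| \le \lambda|f+if'/\lambda|$, and $G'(x_0)=O(\delta/y_0)$ is absorbed into the error. The extra $1/(\lambda L)$ loss comes from the zero-count bookkeeping at the ends of the window of width $L'$.

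\emph{Step 3: (iii) and (iv).} For (iii), the same Rouch\'e comparison at a point $x_0+iy$ gives $|F(x_0+iy)|\le |S|$-type bounds, i.e.\ $A'\cosh(\lambda' y)$. Alternatively, (ii) can be integrated: along vertical lines, apply the Boas bound to $f(\cdot+iy)$ in a Phragm\'en--Lindel\"of fashion. For (iv), (iii) gives $|f|\le A e^{O(\lambda r)+O(1)}$ on $D(x_0,2r)\subset R(I,y_0)$, after reflecting across the real axis using (a) and Schwarz reflection. Jensen's formula then finishes the argument, exactly as in the global proof.

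The main obstacle is Step 2: making the Rouch\'e argument rigorous on a finite rectangle rather than on all of $\C$. The difficulty is controlling the zero count of $S-\alpha F$ near the vertical edges and in the corners, where $F$ is only known to be small through $G$ and the bound in (c) is very weak. I would first try to choose $L'\approx L/2$ and verify $|S|>|F|$ on the sides via the $\exp(-e^{\pi L/4y_0})$ decay. The real part of $F$ on $I$ is perturbed by the non-real factor $G$, so I would apply the intermediate value theorem to $\Re F$ and absorb $\Im F=O(\delta A)$ into $A'$.
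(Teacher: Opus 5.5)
\textbf{There is a genuine gap.} The Rouch\'e/intermediate-value engine is the right one; it is the Duffin--Schaeffer argument the paper localizes. What fails is how you neutralize hypothesis (c), by building the multiplier $G$ into the comparison function $F=f_\eps G$.

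\emph{Shifted $G$.} With the shift by $iy_0/2$, which you need so that $\Re\cosh$ stays $\gtrsim e^{\pi|x-x_0|/2y_0}$ up to the top edge, $G$ is not real on $\R$. Then $F$ admits no Schwarz reflection, and multiplying the reflected $f$ by $G$ fails, because $\Re\cosh<0$ on part of the lower half, where $|G|$ is enormous. The intermediate value theorem also cannot be applied to the complex-valued $S-\alpha F$. Your patch does not repair this. Sign changes of $S-\alpha\Re F$ locate zeros of $\Re(S-\alpha F)$, not of the holomorphic function $S-\alpha F$, so they give no lower bound on its zero count. Nor is $\Im F=O(\delta A)$: $\arg G=-\delta\,\Im\cosh(\cdot)$ is already of size $\asymp 1$ at $|x-x_0|\approx L/2$.

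\emph{Unshifted $G$.} Now $G$ is real on $\R$, but $\Re\cosh(\pi(z-x_0)/2y_0)=\cosh(\pi(x-x_0)/2y_0)\cos(\pi y/2y_0)$ vanishes at $y=\pm y_0$. So $G$ does not beat the double exponential in (c) on the parts of the vertical sides near the corners.

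\emph{Window sides.} On the sides $|x-x_0|=L'\approx L/2$ of your Rouch\'e window, hypothesis (c) says nothing, so you have no a priori bound on $f$. Also $|G|=e^{-O(1)}$ there, so the claim that $G$ kills $f$ there is false. Even the max-modulus bound $|fG|\le Ae^{\lambda y}$ would not give $|S|>\alpha|F|$ there for $y\asymp 1/\lambda$, since $\cosh(1)<e$.

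\textbf{The missing idea is to decouple the two tasks, as the paper does.} Your opening sentence names the right device, but Step 1 replaces it by a multiplier that is then fed into the comparison.

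\emph{Step A (local Phragm\'en--Lindel\"of).} Work on the upper half-rectangle only, where realness is irrelevant. The function $\log(|f(z)e^{i\lambda z}|/A)$ is subharmonic, $\le 0$ on the horizontal edges and $\le e^{\pi L/4y_0}$ on the vertical ones. From any point at distance $\ge L/2$ from $\partial I$, the vertical sides carry harmonic measure $O(e^{-\pi L/2y_0})$. Hence
$|f(x+iy)|\le Ae^{\lambda y}\exp(O(e^{-\pi L/4y_0}))$ uniformly on $R^+([x_0-L/2,x_0+L/2],y_0)$. This is where the exponential error term comes from. After Schwarz reflection of $f$ itself (legitimate by (a)), this bound also gives (iv) directly by Jensen on $D(x_0,2r)$. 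Statement (iii) alone only controls $f$ on the vertical line over $x_0$, not on the whole disk.

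\emph{Step B (realness-preserving damping).} Normalize $A=\lambda=1$, $x_0=0$, and run Rouch\'e plus the intermediate value theorem with
$f_\eps(z)=f(z/(1+\eps))\sinc(\eps z/(1+\eps))$, where $\eps=C/\min(y_0,L)$. The sinc factor supplies $1/(\eps|z|)$ decay, which makes $|f_\eps|<\frac14\cosh y$ near the whole boundary of $R([\theta-k\pi,\theta+k\pi],(1+\eps)y_0)$, whose sides sit at peaks of $\cos(z-\theta)$. At the same time $|f_\eps|\le 1$ on $\R$ and $f_\eps$ stays real there. The $O(1/(\lambda\min(y_0,L)))$ loss comes from this rescaling $z\mapsto z/(1+\eps)$, not from zero-count bookkeeping at the ends of the window.
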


\begin{remark}
The condition (c) is very mild in practice due to the double exponential bound on the right-hand side; this is closely related to the double exponential growth hypothesis in the Phragm\'en--Lindel\"of three-lines theorem.  The error terms in \eqref{fxa}, \eqref{la}, \eqref{lds} could be improved slightly with more effort, and local analogues of the other components (v), (vi) of \Cref{ds} could also be obtained, but the above theorem will be sufficient for our application below.
\end{remark}

\subsection{Application to a problem of Erd\H{o}s and Tur\'an on Lagrange interpolation}

Our main motivation for introducing local Bernstein theory is to make progress on some classical problems regarding Lagrange interpolation.
Given a sequence of distinct points (or \emph{nodes}) $x_1 < \dots <x_n$ in $[-1, 1]$ with $n \geq 1$, we have the \emph{Lagrange
interpolation formula}
\begin{equation}\label{interp}
Q(z) = \sum_{k=1}^n Q(x_k) \ell_k(z),
\end{equation}
for all polynomials $Q$ of degree at most $n-1$, where the \emph{Lagrange basis function} (or \emph{fundamental functions}) $\ell_k$ is the
degree $n-1$ polynomial that vanishes at all of the $x_i$ except for $x_k$, where it takes the value
1. Explicitly, we have
\begin{equation}\label{lk-def}
\ell_k(z) \coloneqq \prod_{i \neq k} \frac{z - x_i}{x_k - x_i}.
\end{equation}
If we view $x_1,\dots,x_n$ as the roots of the monic degree $n$ polynomial
\begin{equation}\label{poly-def}
    P(z) \coloneqq \prod_{i=1}^n (z - x_i),
\end{equation}
we can also write
\begin{equation}\label{lk-form}
 \ell_k(x) = \frac{P(x)}{P'(x_k) (x-x_k)}.
\end{equation}

\begin{remark}\label{res} If we insert \eqref{lk-form} into \eqref{interp}, we obtain the fact that the residues of $\frac{Q(z)}{P(z) (x-z)}$ sum to zero, which is also an immediate consequence of the residue theorem (on the Riemann sphere).
\end{remark}

\begin{example}[Chebyshev polynomial]\label{cheby} If
$$x_k = \cos \left( \frac{2n-2k+1}{2n} \pi \right)$$
for $k=1,\dots,n$, then $P$ becomes the \emph{monic Chebyshev polynomial} $P(x) = 2^{1-n} T_n(x)$ where $T_n$ is the $n^{\operatorname{th}}$ Chebyshev polynomial, defined by the formula $T_n(\cos \theta) = \cos(n \theta)$.  Heuristically, for the purposes of performing interpolation in $[-1,1]$, this choice of nodes is expected to make the Lagrange basis functions nearly as ``small'' as possible in various senses.  A useful heuristic to keep in mind is that these monic Chebyshev polynomials $P(x)$ behave locally like sinusoids of amplitude $2^{1-n}$ and frequency $\pi n \rho_{\mathrm{as}}(x_0)$ around any fixed point $x_0$ in the interior of $[-1,1]$, where $\rho_{\mathrm{as}}(x) \coloneqq \frac{1}{\pi \sqrt{1-x^2}}$ is the arcsine distribution; see \Cref{fig-cheby}.  This is in sharp contrast to the global behavior of such polynomials, which of course diverges to infinity at a polynomial rate rather than growing exponentially or oscillating sinusoidally.
\end{example}

\begin{figure}
    \centering
\includegraphics[width=0.75\textwidth]{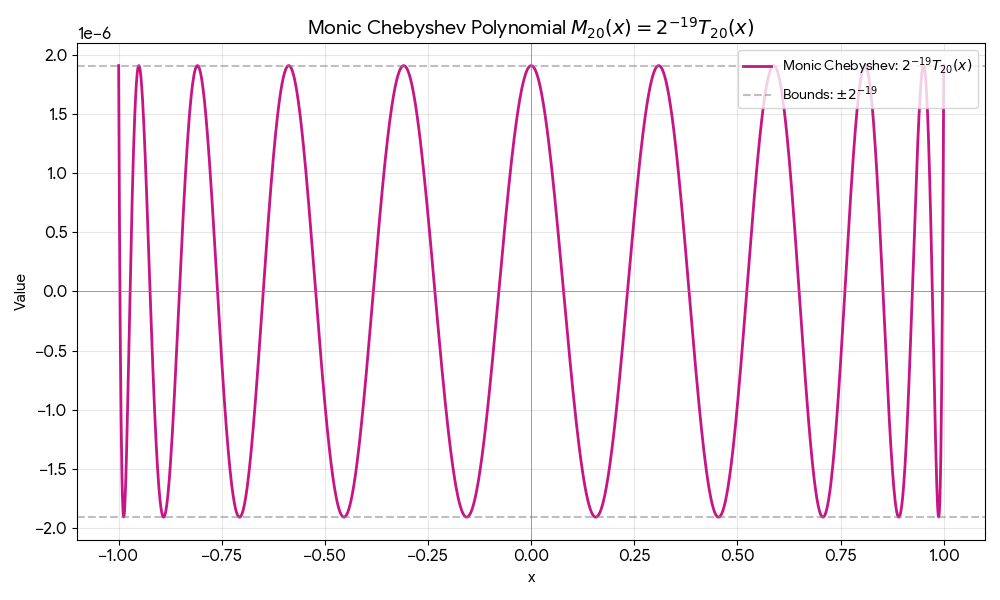}
    \caption{The monic Chebyshev polynomial $P(x) = 2^{1-n} T_n(x)$ with $n=20$.  Note the local sinusoidal behavior in the interior of the interval $[-1,1]$.  Not displayed is the rapid (and non-sinusoidal) growth of $P$ outside of this interval; see \Cref{fig-pot} for a depiction of that growth in (negative) log-scale. (Image generated by Gemini.)}
    \label{fig-cheby}
\end{figure}

The Lagrange interpolation formula \eqref{interp} can be viewed as describing a projection operator from the space $C([-1,1])$ to the space of polynomials of degree at most $n-1$.  The \emph{$C([-1,1])$ Lebesgue constant} is the operator norm of that projection, which can be easily seen to be
\begin{equation}\label{maxl}
 \sup_{x \in [-1,1]} \lambda(x)
\end{equation}
where $\lambda \colon \R \to \R$ is the
\emph{Lebesgue function}
\begin{equation}\label{lambda-def}
\lambda(x) := \sum_{k=1}^n |\ell_k(x)|.
\end{equation}
By \eqref{lk-form} we can also write this function as
\begin{equation}\label{lamax}
 \lambda(x) = \sum_{k=1}^n \frac{|P(x)|}{|P'(x_k)| |x-x_k|}.
\end{equation}
Informally, one can subdivide \eqref{lambda-def} or \eqref{lamax} into three components:
\begin{itemize}
    \item The \emph{macroscopic} contribution where $|x-x_k|$ is somewhat close to $1$ (e.g., $|x-x_k| \geq n^{-c}$ or $|x-x_k| \geq \log^{-C} n$ for some constants $c,C>0$);
    \item The \emph{microscopic} contribution where $|x-x_k|$ is somewhat close to $1/n$ (e.g., $|x-x_k| \leq n^{-1+c}$ or $|x-x_k| \leq \log^C n/n$ for some constants $c,C>0$); and
    \item The \emph{mesoscopic} contribution where $|x-x_k|$ is far from both $1/n$ and $1$ (e.g., $n^{-1+c} < |x-x_k| < n^{-c}$ or $\log^C n/n < |x-x_k| < \log^{-C} n$ for some constants $c,C>0$).
\end{itemize}
In practice, the mesoscopic scales generate the bulk of the contributions to the Lebesgue function, and are the easiest (and most ``universal'') contribution to control; but one also needs to understand the macroscopic and microscopic contributions to obtain the strongest error terms in the estimates.

The study of the $C([-1,1])$ Lebesgue constant \eqref{maxl} received considerable attention in the previous century.  As already observed by Runge \cite{runge}, these constants can become exponentially large even in apparently benign situations, such as equally spaced zeroes; see \cite[\S 2.1]{brutman} for further discussion.  Here we focus instead on lower bounds for this constant that are valid for all choices of $x_k \in [-1,1]$.  The bound
$$ \sup_{x \in [-1,1]} \lambda(x) \gg \log n$$
was proven by Faber \cite{faber}, with a simpler proof (giving the explicit lower bound of $\frac{1}{12} \log n$) given by Fej\'er \cite{fejer}.
Bernstein \cite{bernstein} asserted (without full proof details) the asymptotically optimal lower bound
\begin{equation}\label{bern}
    \sup_{x \in [-1,1]} \lambda(x) \geq \left( \frac{2}{\pi} - o(1) \right) \log n
\end{equation}
which was subsequently improved by Erd\H{o}s and Tur\'an \cite{et} to
$$ \sup_{x \in [-1,1]} \lambda(x) \geq \frac{2}{\pi} \log n - O(\log\log n).$$
Roughly speaking these arguments required understanding the mesoscopic contributions to $\lambda(x)$ only.  By obtaining further control on both macroscopic and microscopic contributions, Erd\H{o}s \cite{erdos} improved the bound further to
\begin{equation}\label{erdos-error}
\sup_{x \in [-1,1]} \lambda(x) \geq \frac{2}{\pi} \log n - O(1)
\end{equation}
for all large $n$.  Finally, Vertesi \cite[Theorem 3.1]{vertesi} showed that
$$ \sup_{x \in [-1,1]} \lambda(x) \geq \frac{2}{\pi} \log n + C + o(1)$$
where
$$ C \coloneqq \frac{2}{\pi} \left(\gamma + \log \frac{4}{\pi}\right) = 0.521251\dots \text{(\href{https://oeis.org/A243258}{OEIS A243258})}; $$
and (using \Cref{cheby}) also showed that this bound is optimal up to the $o(1)$ error;
we refer the reader to \cite{brutman} for further discussion. See also \Cref{fig-lebesgue}.
Thus the minimal value of \eqref{maxl} is determined asymptotically.  We remark that global Bernstein theory, as exemplified by \Cref{ds}, plays an important role in the proofs of these bounds (after making the standard substitution $x = \cos \theta$ to transform a polynomial into a trigonometric polynomial); in particular, the Bernstein inequality in \Cref{ds}(a) can be used to help lower bound the denominators in \eqref{lamax}.

\begin{figure}
    \centering
\includegraphics[width=0.75\textwidth]{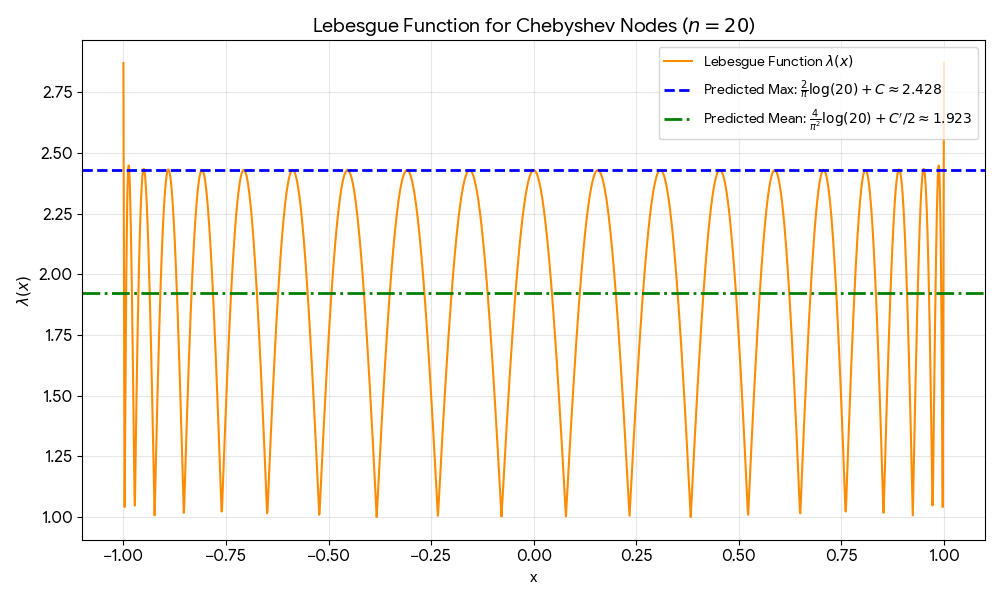}
    \caption{The Lebesgue function $\lambda(x)$ for the polynomial in \Cref{fig-cheby}, together with the predicted maximal value of $\frac{2}{\pi} \log n + C$, which holds up well in the bulk of $[-1,1]$ but becomes less accurate near the endpoints.  The predicted mean of $\frac{4}{\pi^2} \log n + \frac{C'}{2}$, which is smaller by a factor of about $\frac{2}{\pi}$, is also shown.  (Image generated by Gemini.)}
    \label{fig-lebesgue}
\end{figure}

The integral of the Lebesgue function $\lambda$ has also been studied. When the $x_1,\dots, x_n$ are the roots of the Chebyshev polynomial (\Cref{cheby}), it was shown in
\cite[Theorem 3.1]{bgt}, that
$$ \int_{-1}^1 \lambda(x)\ dx = \frac{8}{\pi^2} \log n + C' + o(1)$$
where
$$ C' \coloneqq \frac{8}{\pi^2} \Biggl( \gamma + \int_0^{\pi/2} \frac{\sin x}{x}\ dx - \int_0^{\pi/2} \frac{1-\cos x}{x}\ dx $$
$$ + \int_0^{\pi/2} \frac{\cot x (x-\sin x)}{(1-2x/\pi) x}\ dx\Biggr) = 1.417018\dots.$$
In particular, the mean value $\frac{4}{\pi^2} \log n + \frac{C'}{2} + o(1)$ of $\lambda$ is about $\frac{2}{\pi}$ times the sup norm, which is consistent with the fact that the $L^1$ mean of a sinusoid is $\frac{2}{\pi}$ times its sup norm; see \Cref{fig-lebesgue}.

In \cite{erd-cluj} it was conjectured that the Chebyshev nodes were asymptotically optimal for the integral of $\lambda$, so that one had the general lower bound
\begin{equation}\label{integral-conj}
    \int_{-1}^1 \lambda(x)\ dx \geq \frac{8}{\pi^2} \log n + C' + o(1)
\end{equation}
for arbitrary $-1 \leq x_1 < \dots < x_n \leq 1$.

Erd\H{o}s and Tur\'an asked \cite[p. 224]{et}, \cite{erdos-open}, \cite{various}, \cite[Problem 1153]{bloom} whether analogous lower bounds could be established for the smaller \emph{$C(I)$ Lebesgue constant}
\begin{equation}\label{maxl-loc}
 \sup_{x \in I} \lambda(x)
\end{equation}
where $I$ is a fixed non-trivial subinterval of $[-1,1]$.  In this direction, the lower bound of
$$  \int_I \lambda(x)\ dx \gg |I| \log n$$
was shown by Erd\H{o}s and Szabados \cite{esz} for $n$ sufficiently large depending on $I$ (see also a refinement in \cite{eszv}), which by the pigeonhole principle implies that
$$  \sup_{x \in I} \lambda(x) \gg \log n$$
for the same range of $n$. As recently as 1993, Erd\H{o}s  \cite{erdos-open} wrote (in our notation) that ``Probably
\begin{equation}\label{lln}
    \sup_{x \in I} \lambda(x) \geq \frac{2}{\pi} \log n - o(\log n)
\end{equation}
but we are very far\footnote{This result had been announced in \cite[(6)]{erd-cluj}, but in view of \cite{erdos-open} it appears that the proof was incomplete.} from being able to prove this.''

\subsection{Main application, and a trigonometric toy model}

As an application of \Cref{main-2}, we will be able to answer this question of Erd\H{o}s and Tur\'an in the affirmative (with a stronger error term), while also obtaining a weak form of the integral lower bound \eqref{integral-conj} conjectured in \cite{erd-cluj}:

\begin{theorem}[Main theorem]\label{main}  Let $I \subset [-1,1]$ be a fixed interval.  Let $n$ be sufficiently large, and let $x_1 < \dots < x_n$ be distinct points in $[-1,1]$.
    \begin{itemize}
  \item[(i)] (Sup norm bound) We have
        \begin{equation}\label{erdos-conj}
     \sup_{x \in I} \lambda(x) \geq \frac{2}{\pi} \log n - O(1).
\end{equation}
\item[(ii)] (Integral bound) We have
\begin{equation}\label{erdos-conj-2}
     \int_I \lambda(x)\ dx \geq \frac{4|I|}{\pi^2} \log n - o(\log n).
\end{equation}
In particular,
$$ \int_{-1}^1 \lambda(x)\ dx \geq \frac{8}{\pi^2} \log n - o(\log n).$$
    \end{itemize}
Here and in the sequel the implied constant in the $O()$ notation is allowed to depend on $I$.
\end{theorem}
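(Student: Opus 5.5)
The plan is to work locally on a fixed subinterval $J$ compactly inside $I$, and to apply \Cref{main-2} to a suitably normalized version of $P$ on rectangles over $J$. We may assume $P$ does not vary too wildly on $I$; otherwise $\lambda$ is already large. The key quantity is $M \coloneqq \sup_{x\in J}|P(x)|$. Pick $x_*\in J$ where $|P(x_*)|$ is comparable to $M$; then the nodes and the values $|P'(x_k)|$ control $\lambda(x_*)$ through \eqref{lamax}.

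The first step is a preliminary dichotomy. Either $\lambda$ is already $\gg \log n$ times large on $I$ for trivial reasons, or the node counting measure near $I$ is close to some density in a suitable averaged sense. In the second case, potential theory bounds $\log|P(z)|$ on a rectangle $R(I',y_0)$ with $y_0 \sim \log^C n/n$ or $n^{-c}$. Concretely, I would use the representation $\log|P(z)| = \sum_i \log|z-x_i|$ and compare $\log|P(x+iy)|$ with $\log|P(x)|$: the increment is $\sum_i \tfrac12\log(1+y^2/(x-x_i)^2)$. This is roughly $\lambda y$ with local frequency $\lambda \approx \pi \cdot$ (local node density), up to acceptable errors. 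This should be the "precise formalization" the paper calls \Cref{prelim}. It yields hypotheses (a)--(c) of \Cref{main-2} for $f = P$ (real on $\R$), with $A \approx M$ and $\lambda$ comparable to $n$. The vertical-edge bound (c) is harmless thanks to the double-exponential slack.

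The second step applies the local Boas inequality \Cref{main-2}(ii) at each node $x_k\in J$. Since $P(x_k)=0$, it gives $|P'(x_k)| \le A\lambda(1+o(1))$. Combined with the local zero-count bound (iv), this shows the nodes near $J$ are spaced at least about $\pi/\lambda$ apart in an averaged sense. Inserting these into \eqref{lamax} at the point $x_*$ gives
\[
\lambda(x_*) \ge (1-o(1))\sum_{k: x_k\in J} \frac{|P(x_*)|}{A\lambda |x_*-x_k|}.
\]
The nodes have spacing about $\pi/\lambda$ over a window of length about $1$ on either side, and $|P(x_*)|\approx A$. Summing over the mesoscopic range then gives $\frac{1}{\pi}\log(\lambda)$ from each side, so $\frac{2}{\pi}\log n - O(\cdot)$ in total. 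To get $O(1)$ error rather than $o(\log n)$, I would refine the step using Boas at $x_*$ itself, which locates the maximum with $|P(x_*)|=A(1-O(1/\log))$ errors. I would also use the Duffin--Schaeffer bound (iii) to control how $A$ varies across scales, choosing $y_0$ and $L$ as fixed-ish multiples so that the $O(1/(\lambda\min(y_0,L)))$ errors sum to $O(1)$ over dyadic scales.

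For (ii), the sup bound is insufficient; I would integrate \eqref{lamax} over $I$. The plan is to express $\int_I |P(x)|/|x-x_k|\,dx$ via a weighted residue identity, in the spirit of \Cref{res}, with $Q$ replaced by a weight. Local Bernstein theory then shows $|P|$ averages to at most about $\frac{2}{\pi}A$ times its sinusoidal envelope, giving $\frac{2}{\pi}\cdot\frac{2}{\pi}$ per unit length. I expect the main obstacle to be the first step: obtaining hypotheses (a)--(c) uniformly with the right frequency $\lambda$ from an arbitrary node configuration. The node density may be irregular, so one must show that irregularity itself forces $\lambda(x)$ large somewhere on $I$. I would first try to handle this by a pigeonhole over dyadic scales on the counting function of nodes.
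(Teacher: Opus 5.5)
Both parts have genuine gaps.

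\textbf{Part (i).} The decisive difficulty is the microscopic range, and your argument runs in the wrong direction there. \Cref{main-2}(iv) gives an \emph{upper} bound on the number of nodes near $x_*$. To make $\sum_k |P(x_*)|/(A\lambda|x_*-x_k|)$ large you need a \emph{lower} bound on node counts at every scale down to $1/n$. The density $\rho$ from potential theory supplies this only at scales much larger than $\log n/n$ (in the paper, at $n^{0.01}$ spacings). Below that, the nodes near $x_*$ may be sparser than the local frequency predicts. The bound $|P'(x_k)|\le A\lambda(1+o(1))$ does not improve when the nodes thin out, so spacing $(1+\delta)$ times the expected over $t_0$ spacings costs about $\frac{2\delta}{\pi}\log t_0$ in your lower bound. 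No tuning of $y_0,L$, and no use of Boas or Duffin--Schaeffer at $x_*$, recovers this; at best you reach the Erd\H{o}s--Tur\'an error $O(\log\log n)$, not $O(1)$.

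The missing ingredient is Erd\H{o}s's blending argument. After rescaling to $Q$, suppose the zero count in $[-t_0,t_0]$ is deficient and \eqref{t1} holds. One builds $G\in\mathcal{B}^{\infty,\R}_{\pi(1-1/\log^3 t_0)}$ sharing those zeros, localizes it by a power of $\sinc$ to get $H$, and applies the residue theorem to $H(z)/((z-x_{**})G(x_{**})Q(z))$ to reach a contradiction.

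A second gap is choosing $x_*$ against a single amplitude $M=\sup_J|P|$. ``Comparable to $M$'' already loses a constant factor in front of $\log n$. Even the exact maximizer may sit at $\partial J$, and the amplitude of $P$ can vary by factors as large as $n^{O(1)}$ across $I$. Nodes on the far side can then have $|P'(x_k)|\gg M\lambda$, and a constant fraction of $\log n$ from that side is lost. Your ``otherwise $\lambda$ is already large'' needs a mechanism; the paper supplies it with the regularized amplitude \eqref{eqx} and the AM--GM/pigeonhole selection of \Cref{gp}.

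Conversely, the step you flag as the main obstacle is routine once $\sup_I\lambda\le n^{O(1)}$ is assumed. \Cref{prelim} gets $\delta(x)e^{-n\alpha}\le|P(x)|\ll n^{O(1)}e^{-n\alpha}$ directly from \eqref{lamax}. Working at height $n^{-0.4}$ then absorbs the $n^{O(1)}$ losses into the frequency.

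\textbf{Part (ii).} The proposed mechanism fails. Bernstein-type bounds control $|P'(x_k)|$ by a local \emph{supremum} $A$, so a lower bound would need $|P|$ to average at least (not ``at most'') about $\frac{2}{\pi}A$. That is false once the amplitude varies, and it fails already in the toy model \Cref{toy}(ii). If the $2n$ simple zeros of a degree-$n$ trigonometric polynomial cluster near $0$, then $|P(x)|\approx|\sin(x/2)|^{2n}$ away from $0$. Hence $\int_0^{2\pi}|P|\asymp n^{-1/2}\sup|P|$, and the sup-based bound $\sum_k 1/|P'(x_k)|\ge 2/\sup|P|$ yields only $O(n^{-1/2})$ instead of $8$.

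The missing idea, already visible in \Cref{trig-lem}, is to pivot both factors on an analogue of the leading coefficient $|a_n+ib_n|$ rather than on the sup. The paper does this in five steps:
\begin{enumerate}
\item It averages the kernel $1/|x-x_k|$ over mesoscopic intervals $J_0$, reducing to the product $\int_{J_0}|P|\cdot\sum_{x_k\in J_0}1/|P'(x_k)|$.
\item It picks, via a local Littlewood--Paley estimate and pigeonholing, a height $\eta$ at which $P(x+i\eta)$ is close to a plane wave $|P(x+i\eta)|e^{i\theta(x)}$.
\item It shows $\int|P|\varphi_j$ is at least about $\frac{4}{\pi}\int|P(x+i\eta)|e^{-\pi n\eta\rho(x_J)}\varphi_j$, by expanding $\mathrm{sgn}\cos\theta$ and shifting contours.
\item It shows $\sum_k\varphi_j(x_k)/|P'(x_k)|$ is at least about $\frac{1}{2\pi}\int e^{\pi n\eta\rho(x_J)}|P(x+i\eta)|^{-1}\varphi_j$, by the weighted residue theorem.
\item It combines the two estimates by Cauchy--Schwarz.
\end{enumerate}
The estimate in step 4 is an averaged, harmonic-mean type bound. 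No pointwise upper bound on $|P'(x_k)|$ can replace it.
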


By combining \Cref{main}(i) with a standard Baire category argument, we can get very close to resolving a question of Erd\H{o}s \cite[p. 68]{erd-cluj}, \cite[Problem 1132]{bloom}:

\begin{corollary}[Lower bound at a point]\label{div}  For each $n$ let $-1 \leq x^{(n)}_1 < \dots < x^{(n)}_n \leq 1$ be distinct points, and let $\lambda^{(n)}$ be the associated Lebesgue function, and let $\omega \colon \N \to \R^+$ be any function that goes to infinity as $n \to \infty$ (e.g., $\omega(n) = \log\log\log n$).  Then there exists a dense set of points $x^* \in [-1,1]$ such that $\lambda^{(n)}(x^*) \geq \frac{2}{\pi} \log n - \omega(n)$ for infinitely many $n$.
\end{corollary}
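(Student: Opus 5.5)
The plan is a Baire category argument, using \Cref{main}(i) as the only analytic input. Since $\omega(n) \to \infty$ and the $O(1)$ in \eqref{erdos-conj} depends only on the fixed interval $I$, the theorem yields the following for each fixed nondegenerate interval $I \subset [-1,1]$: for all $n \geq n_0(I)$ there is a point $x \in I$ with $\lambda^{(n)}(x) \geq \frac{2}{\pi}\log n - O_I(1) > \frac{2}{\pi}\log n - \omega(n)$. Both the threshold $n_0(I)$ and the final inequality depend on $I$, which is harmless because $I$ is fixed before $n$ is sent to infinity.

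For each $N$, define
$$ U_N \coloneqq \bigcup_{n \geq N} \left\{ x \in (-1,1) : \lambda^{(n)}(x) > \tfrac{2}{\pi}\log n - \omega(n) \right\}. $$
Each $\lambda^{(n)}$ is continuous, being a finite sum of absolute values of polynomials by \eqref{lambda-def}. Hence every set in the union is open, and so is $U_N$.

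Next, $U_N$ is dense in $[-1,1]$. Take any open subinterval $J \subset (-1,1)$ and let $I$ be a closed nondegenerate interval inside $J$. By the previous paragraph, for every $n \geq \max(N, n_0(I))$ some point of $I$ lies in the $n$-th set of the union. Therefore $U_N$ meets $J$.

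The set $[-1,1]$ is a complete metric space, so by the Baire category theorem $G \coloneqq \bigcap_N U_N$ is a dense $G_\delta$. If $x^* \in G$, then for every $N$ there is some $n \geq N$ with $\lambda^{(n)}(x^*) > \frac{2}{\pi}\log n - \omega(n)$, so the inequality holds for infinitely many $n$. This gives the corollary, in fact with a residual set rather than merely a dense one.

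There is no real obstacle once \Cref{main}(i) is available. The only point needing care is the uniformity just noted: the constant in \eqref{erdos-conj} depends on $I$, and this is absorbed because $\omega(n) \to \infty$ for each fixed $I$. The strict inequality in the definition of $U_N$ is used so that each set is open, which Baire's theorem requires.
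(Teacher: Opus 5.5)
Your proposal is correct and is essentially the paper's argument. Your open sets $U_N$ are the complements of the paper's closed sets $E_{n_0}$, and the paper runs the same Baire category argument by contradiction, using \Cref{main}(i) on a fixed subinterval with $\omega(n)\to\infty$ absorbing the $I$-dependent constant; your direct formulation also gives the comeager conclusion that the paper records in \Cref{rem-baire}.
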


\begin{proof}  Suppose this claim failed for some sufficiently large constant $C$, then there is an interval $I_0 \subset [-1,1]$ such that each $x^* \in I_0$ is contained in a set of the form
    $$ E_{n_0} \coloneqq \left \{ x^* \in [-1,1]: \lambda^{(n)}(x^*) \leq \frac{2}{\pi} \log n - \omega(n) \text{ for all } n \geq n_0 \right \}.$$
These sets are all closed, hence by the Baire category theorem, one of these sets $E_{n_0}$ must contain a non-trivial interval $I \subset I_0 \subset [-1,1]$, so in particular
$$ \sup_{x \in I} \lambda(x) \leq \frac{2}{\pi} \log n - \omega(n)$$
for all sufficiently large $n$.  But this contradicts \Cref{main}(i).
\end{proof}

\begin{remark}\label{rem-baire}  The above argument in fact shows that the set of such $x^*$ is comeager in $[-1,1]$.
    The problem in \cite[p. 68]{erd-cluj}, \cite[Problem 1132]{bloom} seeks to replace the slowly growing function $\omega(n)$ by a constant independent of $n$ (though it is left unspecified whether the constant is permitted to depend on $x_*$).  We do not see a quick way to get this stronger conclusion purely\footnote{The main difficulty here is that the implied constant in the $O(1)$ error that is produced by our arguments will depend on $I$.  It is the author's tentative belief that this dependency on $I$ in the error term cannot be entirely eliminated, and that the final resolution to this problem of Erd\H{o}s cannot be achieved purely by a Baire category approach.  In \cite[p. 68]{erd-cluj}, \cite[Problem 1132]{bloom} it is also asked whether ``dense subset'' can be upgraded to ``full measure subset''; we have nothing new to say in this direction, as it seems to require establishing good lower bounds on $\lambda(x)$ not just for one point in an interval $I$, but on a uniformly positive measure subset of $I$.} from \Cref{main}(i); it is noted in \cite[p. 68]{erd-cluj} that ``this problem does not seem to be easy''.  In these references it is also conjectured that for almost all $x^* \in [-1,1]$ one has $\lambda^{(n)}(x^*) \geq \frac{2}{\pi} \log n - o(\log n)$ for infinitely many $n$, but again we see no quick way to achieve this.
\end{remark}

If one takes the ansatz (inspired by \Cref{cheby}) that the polynomial $P$ behaves like a trigonometric polynomial on $I$ (with period comparable to $1/n$), applies \eqref{lamax}, and focuses on the mesoscopic contributions to \eqref{erdos-conj}, \eqref{erdos-conj-2}, then after some routine rescaling one can extract the following simplified toy model for \Cref{main}:

\begin{theorem}[Trigonometric toy model]\label{toy}
Let $P \colon \R \to \R$ be a trigonometric polynomial of degree $n$, with $2n$ distinct zeroes $x_1,\dots,x_{2n}$ in $[0,2\pi)$.
\begin{itemize}
    \item[(i)] (Sup norm bound)  One has
    \begin{equation}\label{notr}
        \sup_{x \in [0,2\pi)} \sum_{k=1}^{2n} \frac{|P(x)|}{|P'(x_k)|} \geq 2.
    \end{equation}
    \item[(ii)]  (Integral bound)  One has
    \begin{equation}\label{integral}
     \int_0^{2\pi} \sum_{k=1}^{2n} \frac{|P(x)|}{|P'(x_k)|}\ dx \geq 8.
    \end{equation}
\end{itemize}
\end{theorem}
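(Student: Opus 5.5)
Part (i) follows directly from the global Bernstein inequality. By \Cref{poly-ex}, $P \in {\mathcal B}^{\infty,\R}_n$. Let $A \coloneqq \sup_{x}|P(x)|$, which is attained by periodicity. \Cref{ds}(i) gives $|P'(x_k)| \leq nA$ for each of the $2n$ zeroes, so
$$\sup_x \sum_{k=1}^{2n} \frac{|P(x)|}{|P'(x_k)|} \geq A \cdot \frac{2n}{nA} = 2.$$
The sinusoid $\cos(nx)$ shows this is sharp. No localization is needed here, since \Cref{ds} applies globally to periodic $P$.

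For part (ii) the same idea is far too lossy: it would require $\int_0^{2\pi}|P| \geq 4A$, which is false in general. I will therefore compare $\int |P|$ with $\sum_k 1/|P'(x_k)|$ through a residue identity. Substitute $w = e^{iz}$ and write $P(z) = w^{-n} p(w)$, where $p$ has degree $2n$ and its $2n$ roots $e^{ix_k}$ lie on the unit circle. Apply the residue theorem on the cylinder $\R/2\pi\Z \times \R$ to $h(z)/P(z)$, where $h$ is holomorphic with controlled growth as $\Im z \to \pm\infty$. This gives
$$\sum_k \frac{h(x_k)}{P'(x_k)} = \text{(boundary terms at } \Im z = \pm\infty\text{)}.$$
By \eqref{P-approx}, these boundary terms only see the degree-$n$ part of $h$ and the leading coefficients $a_n, b_n$ of $P$. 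Taking $h$ a trigonometric polynomial of degree $<n$ gives zero. Taking $h = e^{\pm i n z}$ gives an explicit multiple of $(a_n \mp i b_n)^{-1}$.

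The second ingredient is the elementary bound
$$\int_0^{2\pi}|P| \geq \left|\int_0^{2\pi} P(x)\, g(x)\,dx\right| \quad \text{for } |g| \leq 1.$$
Taking $g = e^{-in(x-t)}$ only gives $\int |P| \geq \pi |a_n - i b_n|$. Combined with the residue identity this yields a product of about $2\pi$, not $8$; one checks with $\cos(nx)$ that the loss comes from using a pure exponential instead of a square wave. So I will pair an extremal test function on each side. On the integral side I take $g$ to be the square wave $\operatorname{sgn}\cos(n(x-t))$, whose $n$-th Fourier coefficient is $\frac{2}{\pi}$ (times a phase). On the residue side I take a weight $h$ with $|h(x_k)| \leq 1$ at every node, of the form $h = e^{in(z-t)} + (\text{degree} < n)$, whose lower-degree part contributes nothing by the vanishing identity. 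These give
$$\sum_k \frac{1}{|P'(x_k)|} \geq \frac{c_1}{|a_n - i b_n|}, \qquad \int|P| \geq c_2\, |a_n - i b_n|,$$
and I then optimize over the phase $t$ (or average over $t$) to reach $c_1 c_2 = 8$, matching the equality case $P = \cos(nx)$.

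The main obstacle is this sharp constant. Pure exponential weights lose the factor $8/(2\pi)$, and the lost mass in $\int |P|$ sits in the higher harmonics $3n, 5n, \dots$ of the square wave. Those harmonics interact with $P$ through products that exceed the degree-$n$ range where the vanishing identity holds. What I would try first is a weight that is not a trigonometric polynomial: $h(z) = \operatorname{sgn}$-type kernels such as $\frac{1}{2}\cot(\cdot)$ interpolation or a Poisson-smoothed square wave, chosen holomorphic in $0<\Im z<\infty$ so the residue theorem still applies on a half-cylinder. The boundary term on the real axis is then $\int P \bar g$ with $|g|=1$, and the term at $+i\infty$ is controlled by \eqref{P-approx}. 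Checking that the resulting boundary contributions have the right signs, so that the inequality closes with constant exactly $8$, is where the real work lies.
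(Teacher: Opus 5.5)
Part (i) is exactly the paper's argument. For part (ii) you have already named the paper's two ingredients (\Cref{trig-lem}): a square-wave test function for $\int_0^{2\pi}|P|$, and the residue theorem for $e^{inz}/P(z)$ on a period rectangle for $\sum_k 1/|P'(x_k)|$. The gap is that you never evaluate the constants these produce. Instead you declare the sharp constant an open obstacle, so as written (ii) is not proved.

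Nothing more is needed. Take $h(z)=e^{inz}$ with no lower-degree correction, so $|h(x_k)|=1$. On $[0,2\pi]\times[-T,T]$ the vertical sides cancel by periodicity, the top edge contributes $o(1)$, and the bottom edge tends to $4\pi/(a_n-ib_n)$. Hence
\[
\sum_{k=1}^{2n}\frac{1}{|P'(x_k)|}\geq\Bigl|\sum_{k=1}^{2n}\frac{e^{inx_k}}{P'(x_k)}\Bigr|=\frac{2}{|a_n+ib_n|},
\]
so $c_1=2$, which is already sharp for $\cos(nx)$. This is the factor $2$ hidden in your own product $2\pi=2\cdot\pi$.

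Now take $g(x)=\mathrm{sgn}\cos(n(x-t))$ and expand via \eqref{fourier-sum}. Every harmonic $\cos(mn(x-t))$ with $m\geq 3$ odd has frequency $mn>n$, so it is orthogonal to $P$. Only $m=1$ survives:
\[
\int_0^{2\pi}|P(x)|\,dx\geq\frac{4}{\pi}\int_0^{2\pi}P(x)\cos(n(x-t))\,dx=4\bigl(a_n\cos(nt)+b_n\sin(nt)\bigr).
\]
Optimizing in $t$ gives $c_2=4$. If you want to avoid pairing $P$ with an infinite series, use the Fej\'er means \eqref{fourier-sum-fejer}, which are bounded by $1$, and let $M\to\infty$. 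Then $c_1c_2=8$.

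Your diagnosis of the obstacle is backwards. No residue or ``vanishing identity'' is used on the integral side. The harmonics $3n,5n,\dots$ of the square wave do not leak into $\int|P|$: precisely because they exceed $\deg P=n$, they pair to zero with $P$. So the square wave's enlarged first coefficient $4/\pi$ comes at no cost, compared with coefficient $1$ for $\cos(n(x-t))$, which only gives $\pi|a_n+ib_n|$. Nor is a square-wave-type weight needed on the residue side, where the pure exponential is already extremal. Drop the detour through non-polynomial weights ($\cot$-interpolation, Poisson-smoothed square waves on a half-cylinder). Once the two computations above are written out, your argument is the paper's proof.
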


Note that the example of a sinusoid $P(x) = A \cos(n(x-x_0))$ shows that the bounds in \Cref{toy} are sharp.  A naive application of \eqref{integral} and the pigeonhole principle would give the weaker lower bound $\frac{2}{\pi} \times 2$ for \eqref{notr}; the loss of $\frac{2}{\pi}$ corresponds exactly to aforementioned ratio between the $L^1$ mean and $L^\infty$ norm of a sinusoid.

\Cref{toy}(i) is an immediate consequence of the Bernstein inequality (\Cref{ds}(i)) applied to the trigonometric polynomial $P$.  This already explains why (global or local) Bernstein theory is so relevant to the proof of results such as \Cref{main}(i), although to get the strong error term in that result we will also need a separate analysis of the microscopic and macroscopic contributions.  However, there does not seem to be a similarly quick derivation of \Cref{main}(ii) from known results in the literature.

The left-hand side of \eqref{integral} factors as the product of $\int_0^{2\pi} |P(x)|\ dx$ and $\sum_{k=1}^{2n} \frac{1}{|P'(x_k)|}$.  After some experimentation using the tool \emph{AlphaEvolve}, the author was led to conjecture a proof of  \eqref{integral} by separately lower bounding each of these two factors.  The first of these conjectures was then proven by ChatGPT, and the author was able to prove the second, thus giving a complete proof of \Cref{toy}.  More precisely, in \Cref{trig-sec} we will show

\begin{lemma}\label{trig-lem}  Let $P \colon \R \to \R$ be a trigonometric polynomial \eqref{trig}.
\begin{itemize}
    \item[(i)] ($L^1$ lower bound) one has
$$ \int_0^{2\pi} |P(x)|\ dx \geq 4 |a_n+ib_n|.$$
    \item[(ii)] (Lower bound on reciprocal magnitudes of derivatives)  If $P$ has $2n$ distinct zeroes $x_1,\dots,x_{2n}$ in $[0,2\pi)$, then one has
    $$ \sum_{k=1}^{2n} \frac{1}{|P'(x_k)|} \geq \frac{2}{|a_n+ib_n|}.$$
\end{itemize}
\end{lemma}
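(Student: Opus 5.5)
For part (i) I would use duality against a bounded test function that only sees the top frequency of $P$. Let $g_\phi(x) \coloneqq \operatorname{sgn}(\cos(n x - \phi))$ for a phase $\phi \in \R$. Since $|g_\phi| \leq 1$, we have
$$ \int_0^{2\pi} |P(x)|\ dx \geq \left|\int_0^{2\pi} P(x) g_\phi(x)\ dx\right|.$$
The square wave has the classical Fourier expansion
$$ g_\phi(x) = \frac{4}{\pi}\sum_{m \geq 0} \frac{(-1)^m}{2m+1} \cos((2m+1)(nx - \phi)),$$
which contains only the frequencies $n, 3n, 5n, \dots$.

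By orthogonality, pairing with $P$ (which has degree $n$) kills every term except the $m=0$ term. That term pairs only with $a_n\cos(nx)+b_n\sin(nx)$, and gives
$$ \int_0^{2\pi} P g_\phi = \frac{4}{\pi}\cdot \pi\,(a_n\cos\phi + b_n\sin\phi) = 4(a_n\cos\phi+b_n\sin\phi).$$
Choosing $\phi$ with $(\cos\phi,\sin\phi)$ parallel to $(a_n,b_n)$ makes this equal to $4|a_n+ib_n|$, which proves (i). The only care needed is justifying the pairing with the $L^2$-convergent Fourier series, which is routine.

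For part (ii) I would use the residue theorem, in the spirit of \Cref{res}. First, some preliminary facts. A nonzero trigonometric polynomial of degree at most $n$ has at most $2n$ zeroes in a period, counted with multiplicity. This is seen via $w=e^{iz}$, where $P(z) = w^{-n}Q(w)$ with $Q$ a polynomial of degree at most $2n$. Hence the $2n$ distinct zeroes force $a_n+ib_n \neq 0$ (otherwise there are at most $2n-2$ zeroes), and they force all zeroes to be simple, so $P'(x_k)\neq 0$. Consider the meromorphic function $F(z) \coloneqq e^{-inz}/P(z)$. It is $2\pi$-periodic, and inside a rectangle $[0,2\pi]\times[-Y,Y]$ its poles are exactly the real zeroes $x_k$. (Non-real zeroes would give extra roots of $Q$, which is impossible since all $2n$ roots are the $e^{ix_k}$.) The residues are $e^{-inx_k}/P'(x_k)$.

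Next I would integrate $F$ over the boundary of this rectangle, shifting slightly if a zero sits at $0$. The vertical sides cancel by periodicity. For the horizontal sides I use the asymptotic \eqref{P-approx}. As $y\to+\infty$, $F(x+iy)\to \frac{2}{a_n-ib_n}$ uniformly in $x$. As $y\to-\infty$, we have $P(x+iy)\approx \frac{a_n+ib_n}{2}e^{-ny}e^{inx}$ while $|e^{-inz}| = e^{ny}\to 0$, so $F\to 0$. Letting $Y\to\infty$ gives
$$2\pi i \sum_k \frac{e^{-inx_k}}{P'(x_k)} = -2\pi\cdot\frac{2}{a_n-ib_n},$$
the minus sign coming from the orientation of the top edge. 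Taking absolute values and applying the triangle inequality yields
$$\sum_k \frac{1}{|P'(x_k)|} \geq \frac{2}{|a_n+ib_n|}.$$

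I expect no serious obstacle. The main points requiring care are the bookkeeping of signs and conjugates in the asymptotics at $\pm i\infty$, and the choice of the multiplier $e^{-inz}$ rather than $e^{inz}$, so that exactly one horizontal edge contributes a nonzero limit. Combining (i) and (ii) then gives \Cref{toy}(ii), since the left-hand side of \eqref{integral} factors into these two quantities.
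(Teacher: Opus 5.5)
Your proposal is correct and is essentially the paper's argument: in (i) the paper pairs $P$ with Fej\'er means of the square wave (after translating so that $b_n=0$) instead of the full $L^2$-convergent series, and in (ii) it integrates $e^{inz}/P(z)$ over a period rectangle and takes imaginary parts, where you use $e^{-inz}/P(z)$ and take moduli. One harmless slip: since $\sin(nz)\approx \frac{i}{2}e^{-inz}$ as $\Im z\to+\infty$, the limit of $F$ on the top edge is $\frac{2}{a_n+ib_n}$ rather than $\frac{2}{a_n-ib_n}$ (the same conjugation slip appears in \eqref{P-approx}), but only its modulus enters your conclusion.
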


Multiplying the bounds in \Cref{trig-lem} yields \eqref{integral}.  Again, one can check that equality is attained in both inequalities when $P$ is a sinusoid.  The logical dependencies used to prove \Cref{toy} are summarized in \Cref{fig:trig}.

\begin{figure}
\centering
\begin{tikzpicture}[
    node distance=1cm and 1cm,
    box/.style={
        rectangle,
        draw,
        minimum width=3cm,
        minimum height=1cm,
        align=center
    },
    arrow/.style={
        -{Latex[length=3mm]},
        thick
    }
]

\node[box] (loc-bern) {\Cref{ds}};
\node[below =of loc-bern] (grid) {};
\node[right=of loc-bern] (prelim) {\qquad\qquad\qquad\qquad\qquad};
\node[box, below =of prelim] (new) {\Cref{trig-lem}};
\node[box, below =of grid] (parti) {\Cref{toy}(i)};
\node[box, below =of new] (partii) {\Cref{toy}(ii)};

\draw[arrow] (loc-bern) -- (parti);
\draw[arrow] (new) -- (partii);
\end{tikzpicture}
\caption{The logical dependencies between the main results of this paper involving trigonometric polynomials (or functions of global exponential type). The spacing here is chosen to be consistent with that in \Cref{fig:flow} below.}
\label{fig:trig}
\end{figure}
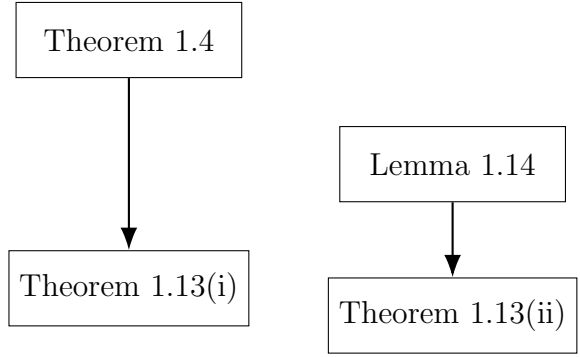

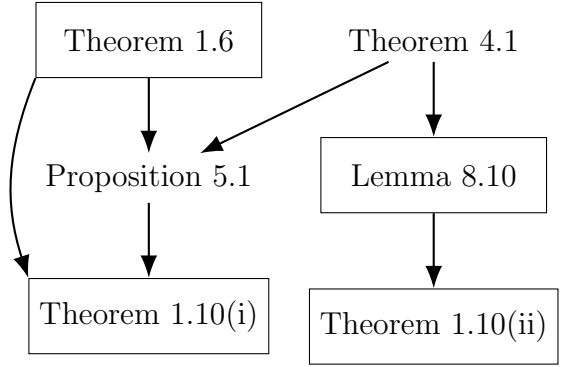
\begin{figure}
\centering
\begin{tikzpicture}[
    node distance=1cm and 1cm,
    box/.style={
        rectangle,
        draw,
        minimum width=3cm,
        minimum height=1cm,
        align=center
    },
    arrow/.style={
        -{Latex[length=3mm]},
        thick
    }
]

\node[box] (loc-bern) {\Cref{main-2}};
\node[below =of loc-bern] (grid) {\Cref{good-grid}};
\node[right=of loc-bern] (prelim) {\Cref{prelim}};
\node[box, below =of grid] (parti) {\Cref{main}(i)};
\node[box, below =of prelim] (new) {\Cref{newtrig}};
\node[box, below =of new] (partii) {\Cref{main}(ii)};

\draw[arrow] (loc-bern) -- (grid);
\draw[arrow, bend right = 20] (loc-bern.south west) to (parti.north west);
\draw[arrow] (prelim) -- (grid);
\draw[arrow] (prelim) -- (new);
\draw[arrow] (new) -- (partii);
\draw[arrow] (grid) -- (parti);
\end{tikzpicture}
\caption{The logical dependencies between the main results of this paper involving polynomials (or functions of local exponential type).  The results in boxes are analogous to the corresponding results in \Cref{fig:trig}. Additional dependencies involving other propositions and lemmas are omitted to reduce clutter.}
\label{fig:flow}
\end{figure}

\subsection{Proof sketch for the sup norm bound}

The logical dependencies used to prove \Cref{main} are analogous to (but more complicated than) those used to establish \Cref{toy}, and are summarized in \Cref{fig:flow}.

We are now ready to discuss the proof of \Cref{main}(i).  Suppose that \eqref{erdos-conj} fails.
We introduce the \emph{empirical measure}
\begin{equation}\label{empirical}
 \mu = \mu^{(n)} \coloneqq \frac{1}{n} \sum_{i=1}^n \delta_{x_i},
\end{equation}
where $\delta_x$ is the Dirac mass at $x$.  This is a probability measure supported on $[-1,1]$, and it generates a \emph{logarithmic potential}
\begin{equation}\label{umu-def}
 U_\mu(z) \coloneqq \int_\R \log \frac{1}{|z-x|}\,d\mu(x) = \frac{1}{n} \sum_{i=1}^n \log \frac{1}{|z-x_i|} = \frac{1}{n} \log \frac{1}{|P(z)|}.
\end{equation}
This potential is harmonic in the upper half-plane.  Informally, the behavior of $U_\mu(x+i\eta)$ captures the macroscopic (resp. mesoscopic, microscopic) behavior of $P$ if $\eta$ is comparable to $1$ (resp. between $1/n$ and $1$, comparable to $1/n$).

From \eqref{umu-def} and the failure of \eqref{erdos-conj} one can obtain good control on $U_\mu$ in the interior of $I$; indeed, it is close to a constant on average on this interval\footnote{We thank Nat Sothanaphan \cite{gpt} (using GPT) for supplying a rigorously proven version of this claim, which we reproduce in \Cref{prelim}(i).}.  Above and below this interval, $U_\mu$ turns out to behave roughly linearly, so that the polynomial $P$ behaves locally as if it were of exponential type, allowing \Cref{main-2} to be utilized.  Among other things, this allows one to assign an amplitude $A(x)$ and frequency $n\pi \rho(x)$ to each location $x$ in $I$, with $P$ being ``dominated'' in various senses by a sinusoid with those parameters; see \Cref{good-grid}.   This already gives good control on the macroscopic portion of the Lebesgue function \eqref{lamax} at some well-chosen location $x_*$: see \Cref{gp}.  The mesoscopic contributions are also easy to control from the previous steps; the main difficulty is to understand the microscopic contributions, and to in particular to avoid having an unexpectedly small number of nodes around $x_*$ at some microscopic scale $\frac{t_0}{n\rho(x_*)}$ for some $1 \ll t_0 \ll n^{0.01}$.  To eliminate this scenario we use a combination of both the global and local Bernstein theory, in which one applies the residue theorem (in the spirit of the residue theorem interpretation of \eqref{interp}) to a blended function which behaves locally like a rescaled version of $P$, but globally like a sinusoid (damped by several powers of a rescaled sinc function to keep certain boundary terms under control); see \Cref{micro-sec}.  This blended function technique is adapted from the arguments of Erd\H{o}s \cite{erdos} (where the role of the sinusoid is played by a Chebyshev polynomial).

\subsection{Proof sketch for the integral bound}

Now we discuss the proof of \Cref{main}(ii), the details of which we give in \Cref{integral-sec}.

We can rewrite the goal \eqref{erdos-conj-2} using \eqref{lamax} as
$$ \int_I \sum_{k: x_k \in I} \frac{|P(x)|}{|P'(x_k)| |x-x_k|}\ dx \geq (4-o(1)) \frac{|I|}{\pi^2} \log n.$$
By a Whitney-type decomposition of the $\frac{1}{|x-x_k|}$ kernel, it essentially suffices to obtain bounds of the form
$$ \int_{J_0} \sum_{k: x_k \in J_0} \frac{|P(x)|}{|P'(x_k)|}\ dx \geq (8-o(1)) \left( \frac{|J_0|}{2\pi} \right)^2$$
for various mesoscopic intervals $J_0$ in $I$ (cf. \eqref{integral}).  By localizing further to microscopic intervals $K_j$ on which $P$ behaves locally as if it were in a Bernstein class, the task is now to lower bound the quantities $\int_\R |P(x)| \varphi_j(x)$ and $\sum_k \frac{1}{|P'(x_k)|} \varphi_j(x_k)$, where $\varphi_j$ is a bump function adapted to $K_j$; see \Cref{newtrig} for the precise lower bounds we will establish.  As indicated by \Cref{fig:trig} and \Cref{fig:flow}, our proof strategy for this lemma will be motivated by the solution of the trigonometric toy model \Cref{toy} via \Cref{trig-lem}, which we give in \Cref{trig-sec}.

The polynomial $P(x)$ is somewhat poorly controlled on the real line.  However, if one shifts upwards to $P(x+i\eta)$ for some well-chosen mesoscopic scale $\eta$ (in our arguments we will take $n^{\eps^3/2-1} \leq \eta \leq n^{\eps^3-1}$ for some small $\eps>0$), then one can establish sinusoidal type behavior: one has a polar representation $P(x+i\eta) = |P(x+i\eta)| e^{i \theta(x)}$ where $\theta$ is a smooth function with derivative comparable to $n$.  The expectation is then that $P(x)$ also oscillates with this phase ``on the average'', and the roots $x_k$ of $P$ are roughly located at the points where $e^{i\theta(x)}$ is $\pm 1$ (with the sign here corresponding to the sign of $P'(x_k)$).  With this heuristic, it becomes reasonable to use the lower bounds
\begin{equation}\label{ir}
\int_\R |P(x)| \varphi_j(x) \geq \int_\R P(x) \varphi_j(x) \mathrm{sgn}(\cos(\theta(x)))\ dx
\end{equation}
and
\begin{equation}\label{ir-2}
    \sum_k \frac{1}{|P'(x_k)|} \varphi_j(x_k) \geq \mathrm{Re} \sum_k \frac{1}{P'(x_k)} \varphi_j(x_k) e^{i\theta(x_k)}.
\end{equation}
It turns out that the right-hand side of \eqref{ir} can be adequately controlled by Fourier expanding the signum function and then shifting the contour by $i\eta$, using an approximately holomorphic function $e^{\pi n y \rho(x_J)} e^{-i\theta(x)} \varphi_j(x)$ as a weight.  The right-hand side of \eqref{ir-2} can be similarly controlled by applying the (weighted) residue theorem to $\frac{1}{P(z)}$ integrated against a complementary weight $e^{-\pi n y \rho(x_J)} e^{i\theta(x)} \varphi_j(x)$.  Neglecting some manageable error terms, one can then control these right-hand sides in terms of weighted integrals of $|P(x+i\eta)|$ and $\frac{1}{|P(x+i\eta)|}$ respectively (cf., the factors $|a_n+ib_n|$ and $\frac{1}{|a_n+ib_n|}$ in \Cref{toy}, as well as \eqref{P-approx}).  The product of such integrals can then be lower-bounded using the Cauchy--Schwarz inequality to obtain the required estimate.

\begin{remark}
It is plausible that the methods of \Cref{main}(i) and \Cref{main}(ii) can be combined to improve the error term in \eqref{erdos-conj-2} to $O(1)$, but this appears to be somewhat complicated technically (requiring one to control the microscopic and macroscopic contributions to the integral in \eqref{integral}), and we will not attempt to do so here.
\end{remark}

\subsection{Acknowledgments and AI tool disclosure}

The author learned about the problem \eqref{lln} from the Erd\H{o}s problem site \cite{bloom}. The author was supported by the James and Carol Collins Chair, the Mathematical Analysis \& Application Research Fund, and by NSF grant DMS-2347850, and is particularly grateful to recent donors to the Research Fund. We thank Bum Jun Kim for corrections.

Many of the references here were located by ChatGPT DeepResearch, Gemini DeepResearch, and Claude.  The proof of \Cref{tpl} is based on suggestions from ChatGPT Pro, Gemini Pro, and Claude.  GPT was used separately by Nat Sothanaphan, Aron Bhalla, and myself to identify corrections in an earlier version of the manuscript.  As already mentioned, AlphaEvolve and ChatGPT Pro were used to discover the proof of \Cref{toy} via \Cref{trig-lem}.  Github Copilot was used to perform autocompletion of text, Claude Code was used to perform routine typesetting, and Gemini was used to generate most of the plots in this paper; but besides those tool usages, the text in this paper was human-written.

\subsection{Notation} We use $X \ll Y$, $Y \gg X$, or $X = O(Y)$ to denote the estimate $|X| \leq CY$ for some constant $C>0$, and write $X \asymp Y$ for $X \ll Y \ll X$. If we need the implied constant $C$ to depend on parameters, we indicate this by subscripts, thus for instance $O_{C_0}(1)$ denotes a quantity bounded in magnitude by $C_{C_0}$ for some $C_{C_0}$ depending only on $C_0$.  However, in many cases we will explicitly omit the dependence on some very frequently occurring parameters (such as the interval $I$) to reduce clutter.

We will designate some mathematical objects (e.g., an interval $I$) as ``fixed'' with respect to the asymptotic parameter $n$.  We then use $o(X)$ to denote any quantity bounded in magnitude by $c(n) X$, where $c(n)$ is an expression depending on $n$ and the fixed quantities that goes to zero as $n \to \infty$ (holding the other quantities fixed).

If $I,J$ are intervals, we use $J \Subset I$ to denote the claim that the closure of $J$ is contained in the interior of $I$.  We use $\partial I$ to denote the boundary of $I$, and $|I|$ to denote its length.  All intervals will be understood to have positive finite length.  If $c>0$, we use $cI$ to denote the interval with the same midpoint as $I$ but with length $c|I|$.

\section{Some Fourier, complex, and harmonic analysis}

We record here some standard results from Fourier, complex, and harmonic analysis that we use in the paper.

\subsection{The square wave}

We first recall the standard Fourier expansion of a square wave:

\begin{lemma}[Fourier expansion of square wave]\label{lem-square}  One has the Fourier expansions
    \begin{equation}\label{fourier-sum-sine}
 \mathrm{sgn}(\sin x) = \frac{4}{\pi} \sum_{m \text{ odd}} \frac{\sin(mx)}{m}
\end{equation}
and
    \begin{equation}\label{fourier-sum}
 \mathrm{sgn}(\cos x) = \frac{4}{\pi} \sum_{m \text{ odd}} (-1)^{(m-1)/2} \frac{\cos(mx)}{m}
\end{equation}
where the Fourier expansion converges in $L^2([0,2\pi))$, and $m$ ranges over odd natural numbers.  In particular, taking Fej\'er sums in \eqref{fourier-sum}, we have
\begin{equation}\label{fourier-sum-fejer}
 \left|\frac{4}{\pi} \sum_{m \text{ odd}} \left(1-\frac{m}{M}\right)_+ \frac{\sin(mx)}{m}\right|, \left|\frac{4}{\pi} \sum_{m \text{ odd}} \left(1-\frac{m}{M}\right)_+ (-1)^{(m-1)/2} \frac{\cos(mx)}{m}\right| \leq 1
\end{equation}
for any integer $M \geq 1$ and real $x$. (See \Cref{fig:fejer}.)
\end{lemma}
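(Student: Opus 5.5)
The identities \eqref{fourier-sum-sine} and \eqref{fourier-sum} are obtained by computing Fourier coefficients directly. The function $\mathrm{sgn}(\sin x)$ is odd and $2\pi$-periodic, so only sine terms appear. Its coefficients are
$$\frac{1}{\pi}\int_0^{2\pi}\mathrm{sgn}(\sin x)\sin(mx)\,dx=\frac{2}{\pi}\int_0^\pi \sin(mx)\,dx=\frac{2(1-(-1)^m)}{\pi m},$$
which vanishes for even $m$ and equals $\frac{4}{\pi m}$ for odd $m$. Since the function is in $L^2([0,2\pi))$, Parseval/Riesz--Fischer gives $L^2$ convergence. Substituting $x\mapsto x+\pi/2$, and using $\mathrm{sgn}(\sin(x+\pi/2))=\mathrm{sgn}(\cos x)$ together with $\sin(m(x+\pi/2))=(-1)^{(m-1)/2}\cos(mx)$ for odd $m$, then yields \eqref{fourier-sum}.

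For \eqref{fourier-sum-fejer}, write the Fej\'er mean as a convolution. The sum $\frac{4}{\pi}\sum_{m \text{ odd}}(1-\frac{m}{M})_+\frac{\sin(mx)}{m}$ is exactly the Fourier series of the square wave with the coefficient of frequency $\pm m$ multiplied by $(1-|m|/M)_+$, and that multiplier is the Fourier transform of the Fej\'er kernel. So it equals $F_M * \mathrm{sgn}(\sin\cdot)(x)=\frac{1}{2\pi}\int_0^{2\pi}F_M(t)\,\mathrm{sgn}(\sin(x-t))\,dt$, where
$$F_M(t)=\sum_{|j|<M}\left(1-\frac{|j|}{M}\right)e^{ijt}=\frac{1}{M}\left(\frac{\sin(Mt/2)}{\sin(t/2)}\right)^2.$$
The two facts we need are $F_M\ge 0$ and $\frac{1}{2\pi}\int F_M=1$. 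From them, $|F_M*g|\le \|g\|_{L^\infty}=1$ for $g=\mathrm{sgn}(\sin\cdot)$. The cosine version follows the same way, or directly by the translation $x\mapsto x+\pi/2$, which commutes with convolution.

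The only point needing care is justifying that the Fej\'er-weighted sum really equals this convolution pointwise for every real $x$, not just in $L^2$. This holds because the sum is finite: pairing the absolutely convergent kernel coefficients with the bounded square-wave coefficients gives the identity at every $x$ by Fubini, since $F_M$ is a trigonometric polynomial. With that settled, the pointwise bound $\le 1$ holds for all real $x$, and the lemma follows.
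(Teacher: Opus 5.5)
Your proposal is correct and follows the route the paper indicates. The paper omits the proof as a routine computation of Fourier coefficients, and the caption of \Cref{fig:fejer} attributes the bound \eqref{fourier-sum-fejer} to the Fej\'er means being convolutions of the square wave with the nonnegative, unit-mass Fej\'er kernel, which is exactly your argument. (The pointwise identity between the Fej\'er sum and $F_M * \mathrm{sgn}(\sin\cdot)$ does not need Fubini: $F_M$ is a finite trigonometric sum, so linearity of the integral suffices.)
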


This lemma can be proven by a routine calculation of the Fourier coefficients of $\mathrm{sgn}(\sin x)$ and $\mathrm{sgn}(\cos x)$, and is omitted; see for instance \cite[Exercise 11.19]{apostol} or \cite[1.442.3-4]{gradshteyn} for essentially the same Fourier series expansion.

\begin{figure}
\centering
\includegraphics[width=0.75\textwidth]{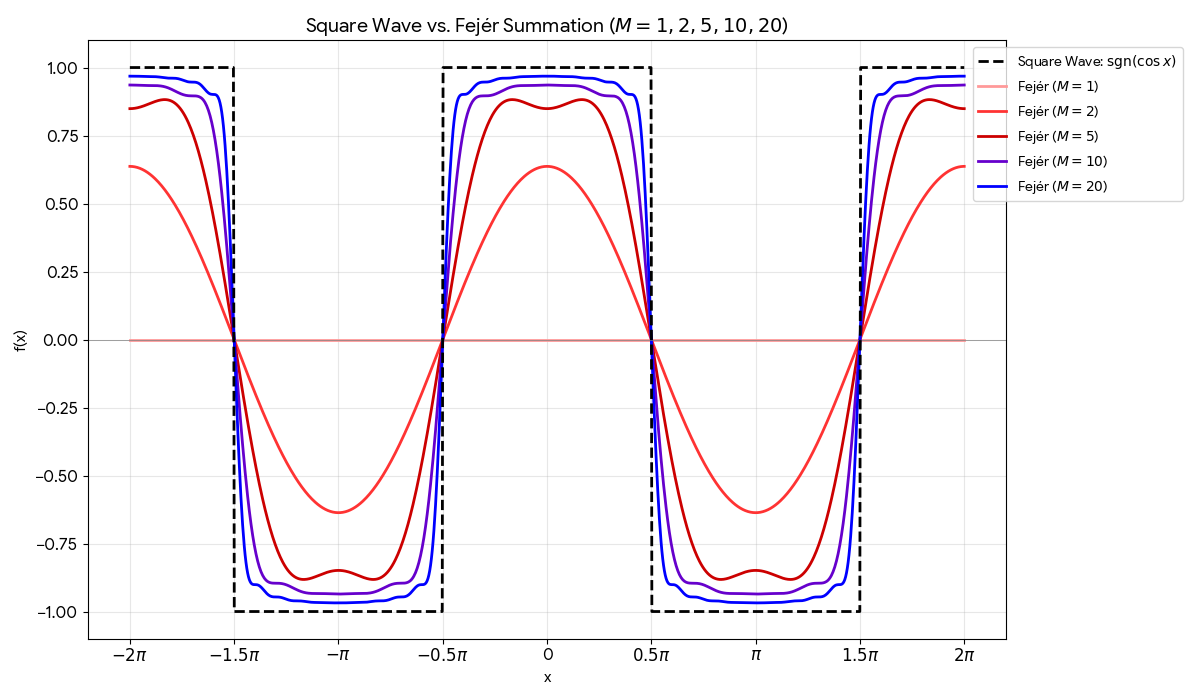}
\caption{The square wave $\mathrm{sgn}(\cos x)$, together with the Fej\'er sum approximant \eqref{fourier-sum-fejer} with $M=1, 2, 5,10, 20$.  Note how the approximant, being the convolution of the square wave with a non-negative approximation to the identity (the Fej\'er kernel), stays bounded by $1$ in magnitude, avoiding the Gibbs phenomenon.  (Image generated by Gemini.)}
\label{fig:fejer}
\end{figure}

\subsection{The residue theorem}

A fundamental tool in complex analysis is the residue theorem, one version of which we give here:

\begin{theorem}[Residue theorem]\label{residue}  Let $f$ be a meromorphic function on a neighborhood of a polygon $\Omega$, with no poles on the boundary $\partial \Omega$.  Then one has
    $$ \frac{1}{2\pi i} \int_{\partial \Omega} f(z)\ dz = \sum_{z_0 \in \Omega} \mathrm{Res}(f, z_0)$$
where $z_0$ ranges over poles of $f$ in $\Omega$, $\mathrm{Res}(f, z_0)$ is the residue of $f$ at $z_0$, and $\partial \Omega$ is traversed once anticlockwise.
\end{theorem}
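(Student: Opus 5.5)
The plan is to reduce to the classical case of a positively oriented simple closed curve, using Cauchy's theorem applied to a function with the principal parts removed. Since $f$ is meromorphic on a neighborhood $U$ of the compact polygon $\Omega$, its poles are isolated. So only finitely many poles $z_1,\dots,z_m$ lie in $\Omega$, and by hypothesis none lies on $\partial\Omega$.

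At each $z_j$ write the Laurent principal part
$$ S_j(z) = \sum_{k=1}^{K_j} \frac{c_{j,k}}{(z-z_j)^k}, \qquad c_{j,1} = \mathrm{Res}(f,z_j). $$
The function $g \coloneqq f - \sum_j S_j$ then has removable singularities at every $z_j$. After shrinking $U$ to a smaller open neighborhood of $\Omega$ that contains no other poles, $g$ is holomorphic there.

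The integrals of the principal parts are handled termwise. For $k \geq 2$ the term $(z-z_j)^{-k}$ has the primitive $-(z-z_j)^{1-k}/(k-1)$ on $\C \setminus \{z_j\}$, so its integral over the closed curve $\partial\Omega$ vanishes. For $k=1$ we get $2\pi i\, c_{j,1}$ times the winding number of $\partial\Omega$ about $z_j$. That winding number equals $1$ for points in the interior of a polygon traversed once anticlockwise; this is the Jordan curve theorem in its elementary polygonal version, which I will take as standard.

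It remains to show $\int_{\partial\Omega} g = 0$. I would triangulate the polygon $\Omega$ into finitely many triangles. The edges interior to $\Omega$ are traversed twice with opposite orientations, so their contributions cancel. On each triangle, the Goursat (Cauchy) theorem gives a zero integral, since $g$ is holomorphic on a neighborhood of the triangle. Summing over triangles gives the result.

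I expect the main obstacle to be purely topological: justifying the winding number and the triangulation of a general (possibly nonconvex) simple polygon. Both are standard facts I would cite rather than reprove; in this paper the residue theorem is only ever applied to rectangles, where they are immediate.
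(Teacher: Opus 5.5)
Your proof is correct. The paper does not prove this statement; it quotes it as the standard residue theorem, so there is no proof of its own to match.

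The closest argument in the paper is the proof of the weighted version, \Cref{residue-weight}. Setting $w \equiv 1$ there recovers the present theorem by a different route. One applies Stokes' theorem on $\Omega$ with small disks $D(z_0,\eps)$ about the poles removed. The area term vanishes because $\partial_{\bar z} f = 0$ on that region. Each small circle then contributes $\frac{1}{2\pi i}\int_{\partial D(z_0,\eps)} f\,dz = \mathrm{Res}(f,z_0)$, by termwise integration of the Laurent series, so with $w \equiv 1$ poles of any order are allowed.

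Your route instead subtracts principal parts and integrates the $(z-z_j)^{-k}$ terms directly: primitives handle $k \geq 2$ and the winding number handles $k=1$. Triangulation plus Goursat then disposes of the holomorphic remainder. This needs only Goursat's theorem, not Green's theorem on a multiply connected region. It also makes the planar topology explicit (winding number $1$ about interior points, triangulability of a simple polygon). The excision argument leaves that topology implicit in the applicability of Stokes' theorem to the punctured polygon.

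The price is that your argument does not extend to non-holomorphic weights. Once $w$ is not holomorphic, $\int_{\partial\Omega} g w\,dz$ no longer vanishes and must itself be computed by Stokes' theorem. That is why the paper uses the excision argument for \Cref{residue-weight}. It is also why that result needs simple poles: $(z-z_0)^{-k}\,\partial_{\bar z} w$ is in general not locally integrable for $k \geq 2$.
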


In our main arguments, it will be convenient to work with a generalization of this theorem in which we insert a smooth weight $w$ that is only \emph{approximately} holomorphic in the sense that the \emph{Wirtinger derivative}
$$ \partial_{\bar z} w \coloneqq \frac{1}{2} \left( \partial_x + i \partial_y \right) w$$
is small, but not necessarily zero.  Of course, the Cauchy--Riemann equations assert that this derivative vanishes if and only if $w$ is genuinely holomorphic.

\begin{theorem}[Weighted residue theorem]\label{residue-weight}  Let $f$ be a meromorphic function on a neighborhood of a polygon $\Omega$, with no poles on $\partial \Omega$ and only simple poles in the interior of $\Omega$.  Let $w$ be a smooth (but not necessarily holomorphic) complex-valued function on a neighborhood of $\Omega$.  Then
    $$ \frac{1}{2\pi i} \int_{\partial \Omega} f(z) w(z)\ dz = \sum_{z_0 \in \Omega} \mathrm{Res}(f, z_0) w(z_0) + \frac{1}{\pi} \int_\Omega f(z) \partial_{\bar z} w(z)\ dA(z)$$
where $dA$ is area measure.
\end{theorem}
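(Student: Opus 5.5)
The plan is to reduce the weighted residue theorem to two standard facts. The first is Stokes' theorem (Green's theorem) in complex form: for a smooth function $g$ on a neighbourhood of a region $\Omega'$ with piecewise smooth boundary,
$$\int_{\partial \Omega'} g\ dz = 2i \int_{\Omega'} \partial_{\bar z} g\ dA.$$
This follows from Green's theorem applied to $g\,dx + ig\,dy$, since $d(g\,dz) = \partial_{\bar z} g\, d\bar z \wedge dz = 2i\,\partial_{\bar z} g\, dx\wedge dy$. The second fact is a local computation near each simple pole.

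First I would excise small disks. Let $z_1,\dots,z_m$ be the poles of $f$ in $\Omega$. There are finitely many of them, since $\Omega$ is compact and the poles are isolated. For small $\eps>0$ set
$$\Omega_\eps \coloneqq \Omega \setminus \bigcup_j D(z_j,\eps).$$
On a neighbourhood of $\Omega_\eps$ the function $g = fw$ is smooth. Because $f$ is holomorphic there, $\partial_{\bar z}(fw) = f\,\partial_{\bar z} w$. Applying Stokes and dividing by $2\pi i$ gives
$$\frac{1}{2\pi i}\int_{\partial\Omega} fw\ dz - \sum_j \frac{1}{2\pi i}\oint_{|z-z_j|=\eps} fw\ dz = \frac{1}{\pi}\int_{\Omega_\eps} f\,\partial_{\bar z} w\ dA,$$
where the small circles are traversed anticlockwise.

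Next I would let $\eps\to 0$.
\begin{itemize}
\item[(a)] Near $z_j$, write $f(z) = \frac{r_j}{z-z_j} + h_j(z)$ with $h_j$ holomorphic, and $w(z) = w(z_j) + O(|z-z_j|)$ by smoothness. The circle integral is then $r_j w(z_j) + O(\eps)$: the main term $\frac{1}{2\pi i}\oint \frac{r_j w(z_j)}{z-z_j}\,dz = r_jw(z_j)$, and every remaining term has a bounded integrand on a circle of length $2\pi\eps$.
\item[(b)] The function $f\,\partial_{\bar z} w$ is $O(1/|z-z_j|)$ near each pole, which is locally integrable in the plane. Dominated convergence therefore sends the right-hand side to $\frac1\pi\int_\Omega f\,\partial_{\bar z}w\ dA$.
\end{itemize}
Combining (a) and (b) yields the stated identity.

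The only delicate point is exactly where the simple-pole hypothesis is used. It ensures both that the circle integrals converge to $\mathrm{Res}(f,z_j)w(z_j)$ and that the area integral is absolutely convergent. With higher-order poles, derivatives of $w$ would enter the circle terms, and $f\,\partial_{\bar z}w$ would fail to be integrable.

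A minor technical issue is a pole lying on a vertex or edge of the polygon. This is excluded by hypothesis, so the small disks can be taken inside the interior of $\Omega$, and $\Omega_\eps$ is a region with piecewise smooth boundary to which Green's theorem applies.
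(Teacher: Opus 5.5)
Your proposal is correct and follows essentially the same route as the paper. The paper also excises small disks around the simple poles, applies Stokes' theorem using $\partial_{\bar z}(fw) = f\,\partial_{\bar z} w$ away from the poles, and passes to the limit via $w(z)=w(z_0)+O(\eps)$ and $f(z)=\mathrm{Res}(f,z_0)/(z-z_0)+O(1)$ on the small circles, together with local integrability of $f\,\partial_{\bar z}w$. You simply write out the details the paper leaves as ``standard calculations'', namely the identity $d(g\,dz)=2i\,\partial_{\bar z}g\,dx\wedge dy$ and the dominated convergence step.
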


\begin{proof}  As $f$ only has simple poles, $\partial_{\bar z} (fw)$ is locally integrable in $\Omega$.  Excising small disks $D(z_0,\eps)$ around each pole $z_0$ and using Stokes' theorem in the remaining region, we can write
$$ \frac{1}{\pi} \int_\Omega \partial_{\bar z} (f(z) w(z))\ dA(z) = \frac{1}{2\pi i} \int_{\partial \Omega} f(z) w(z)\ dz - \lim_{\eps \to 0} \sum_{z_0 \in\Omega} \frac{1}{2\pi i} \int_{\partial D(z_0,\eps)}  f(z) w(z)\ dz,$$
where the circles $\partial D(z_0,\eps)$ are traversed once anticlockwise.

From the Leibniz rule and the Cauchy--Riemann equations we have
$$ \partial_{\bar z} (f(z) w(z)) = f(z) \partial_{\bar z} w(z)$$
away from the poles. For $z \in D(z_0,\eps)$, we can approximate $w(z) = w(z_0) + O(\eps)$ and $f(z) = \frac{\mathrm{Res}(f,z_0)}{z-z_0} + O(1)$, and the claim then follows by standard calculations.
\end{proof}

\subsection{Some inequalities on trigonometric polynomials}\label{trig-sec}

As a quick application of the above tools, we can now establish \Cref{trig-lem}.

We first prove \Cref{trig-lem}(i).  By\footnote{This argument was provided by ChatGPT Pro.} translating $P$ if necessary, we may assume that $b_n=0$, and then we can rescale $a_n=1$.  Thus we have
\begin{equation}\label{PQ}
    P(x) = \cos nx + Q(x)
\end{equation}
for some trigonometric polynomial $Q$ of degree at most $n-1$, and our task is now to show that $\int_0^{2\pi} |P(x)|\ dx \geq 4$.  On the other hand, from \eqref{fourier-sum-fejer} and the triangle inequality we have
$$ \int_0^{2\pi} \frac{4}{\pi} \sum_{m \text{ odd}} \left(1-\frac{m}{M}\right)_+ (-1)^{(m-1)/2} \frac{\cos(mnx)}{m} P(x)\ dx \leq \int_0^{2\pi} |P(x)|\ dx$$
for any integer $M \geq 1$.  By \eqref{PQ} and Fourier orthogonality\footnote{Alternatively, one can write $\cos(mnx) = \frac{1}{2} \mathrm{Re} e^{imnx}$ and shift the contour of integration upwards towards $+i\infty$.  This will be closer in spirit to the arguments we will develop below to establish \Cref{main}(ii).}, the left-hand side is $4(1-\frac{1}{M})$.  Sending $M \to \infty$ gives the claim.

Now we prove \Cref{trig-lem}(ii).  Again we can translate and normalize so that $b_n=0$ and $a_n=1$, so that $P$ takes the form \eqref{PQ}. By viewing $P$ as a degree $2n$ polynomial in $e^{ix}$, divided by $e^{inx}$, we see that all the zeroes of $P$ are on the real line. By the triangle inequality, we have
$$ \sum_{k=1}^{2n} \frac{1}{|P'(x_k)|} \geq -\sum_{k=1}^{2n} \frac{\sin(nx_k)}{P'(x_k)}  = -\Im \sum_{k=1}^{2n} \frac{e^{inx_k}}{P'(x_k)} .$$
The summands $-\frac{e^{inx_k}}{P'(x_k)}$ are the residues of $-\frac{e^{inz}}{P(z)}$.  By \Cref{residue} and the $2\pi$-periodicity of $P$, one can write the right-hand side as
$$\Im \frac{1}{2\pi i} \left( \int_0^{2\pi} \frac{e^{in(x+iT)}}{P(x+iT)}\ dx - \int_0^{2\pi} \frac{e^{in(x-iT)}}{P(x-iT)}\ dx \right)$$
for any $T>0$.  From \eqref{PQ} we have
$$ P(x \pm iT) = \frac{1}{2} e^{nT \mp inx} (1 + O( e^{-T} ))$$
(cf., \eqref{P-approx}) from which we see that the integrals in the above expression converge to $0$ and $4\pi$ respectively as $T \to +\infty$, giving the claim.

\subsection{Harmonic measure}

Given a planar domain $\Omega$ and a point $z \in \Omega$, the harmonic measure $\omega_z$ is the probability measure on the boundary $\partial \Omega$ describing the distribution of the first exit point of a Brownian motion started at $z$, so that
\begin{equation}\label{harmonic}
    u(z) = \int_{\partial \Omega} u(v)\ d\omega_z(v)
\end{equation}
when $u$ is harmonic on $\Omega$ (and continuous up to the boundary), and
\begin{equation}\label{subharmonic}
    u(z) \leq \int_{\partial \Omega} u(v)\ d\omega_z(v)
\end{equation}
if $u$ is subharmonic, assuming reasonable growth conditions at infinity in the case that $\Omega$ is unbounded.

For instance, if $\Omega$ is the upper half-plane, the probability distribution $\mu_{x_0+i\eta}$ is given by the Poisson kernel
\begin{equation}\label{poisson-mes}
 \mu_{x_0+i\eta} = \Poisson_\eta(x_0-x)\ dx,
\end{equation}
where $\Poisson_\eta$ is the Poisson kernel
\begin{equation}\label{poisson-kernel}
 \Poisson_\eta(x) \coloneqq \frac{\eta}{\pi(x^2 + \eta^2)}.
\end{equation}
We therefore have the representation
\begin{equation}\label{poisson}
 u(x_0+i\eta) = \int_\R \Poisson_\eta(x_0-x) u(x)\ dx
\end{equation}
if $u$ is harmonic on the upper half-plane, continuous up to the boundary, and grows slower than linearly with $\int_\R \frac{|u(x)|}{1+x^2}\ dx < \infty$ (e.g., one has $u(z) \ll \log |z|$ for all large $z$).  Indeed, one can easily check that the right-hand side of \eqref{poisson} is harmonic in the upper half-plane, continuous up to the boundary, and equals $u(x)$ on the real line, so by reflection the difference between the two sides of \eqref{poisson} is harmonic on all of $\C$, vanishes on the real line, and grows slower than linearly, hence vanishes entirely.

If in addition $u$ is square-integrable on the real line, we recall the classical Littlewood--Paley $L^2$ identity
\begin{equation}\label{littlewood-paley}
     \int_0^\infty \int_\R |\nabla u(x+iy)|^2\ y dx dy = \frac{1}{2} \int_{\R} u(x)^2\ dx
\end{equation}
where we write $|\nabla u|^2$ as shorthand for $|\partial_x u|^2 + |\partial_y u|^2$.  See for instance \cite[\S IV.1.2]{stein} for a proof.

We will need some estimates of harmonic measure on a rectangle $R^+(I,y_0)$.

\begin{lemma}[Harmonic measure]\label{harm-lemma}  Let $R^+(I,y_0)$ be a rectangle of the form \eqref{riy}, and let $x_0+i\eta$ be a point in the interior of this rectangle.
    \begin{itemize}
        \item[(i)] On the lower edge $\{ x: x \in I\}$ of the rectangle, the harmonic measure $\omega_{x_0+i\eta}$ is bounded by the Poisson kernel \eqref{poisson-mes}.
        \item[(ii)] The mass that the harmonic measure $\omega_{x_0+i\eta}$ assigns to the lower edge is at most $1 - \frac{\eta}{y_0}$, and the mass it assigns to the upper edge is at most $\frac{\eta}{y_0}$.
        \item[(iii)] The mass that the harmonic measure $\omega_{x_0+i\eta}$ assigns to the left and right edges is $O(e^{-\pi \mathrm{dist}(x_0, \partial I) / y_0})$.
    \end{itemize}
    See Figure \ref{fig-rect}.
\end{lemma}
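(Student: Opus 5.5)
The plan is to obtain all three parts by comparing with explicit harmonic functions through the maximum principle. Throughout, $\omega_z(E)$, viewed as a function of $z$, is the harmonic function on the rectangle with boundary values $1_E$; domain monotonicity of harmonic measure will also be used.

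For (i), let $E$ be a Borel subset of the lower edge $I$. Consider the function $u(z) = \int_E \Poisson_{\Im z}(\Re z - x)\,dx$, which is the harmonic measure of $E$ in the upper half-plane. It is harmonic and nonnegative on the rectangle. It has boundary values $1_E$ on the lower edge (almost everywhere) and is nonnegative on the other three edges. By the maximum principle, $\omega_z(E) \leq u(z)$. Equivalently, a Brownian motion that exits the rectangle through $E$ must also exit the larger domain, the half-plane, through $E$. This gives the claimed domination by the Poisson kernel \eqref{poisson-mes}.

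For (ii), compare with the linear functions $y/y_0$ and $1 - y/y_0$. The function $h(z) = \Im z / y_0$ is harmonic, vanishes on the lower edge, equals $1$ on the upper edge, and is nonnegative on the vertical edges. Hence $\omega_z(\text{upper edge}) \leq h(z) = \eta/y_0$, since $h$ dominates the indicator of the upper edge on the whole boundary. Similarly $1 - \Im z/y_0$ equals $1$ on the lower edge and is nonnegative elsewhere on the boundary, so it dominates the lower-edge harmonic measure. This bound equals $1 - \eta/y_0$.

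For (iii), which is the only step needing a genuine construction, treat the right edge $\{b + iy : 0 \leq y \leq y_0\}$, where $I = [a,b]$; the left edge is symmetric. Use the positive harmonic function
$$ v(x+iy) = \frac{e^{\pi (x-b)/y_0} \sin(\pi y / y_0)}{\sin(\pi/2)}, $$
or more precisely a slightly modified version: $\sin(\pi y/y_0)$ vanishes at the top and bottom edges, so it does not dominate the indicator near the corners of the right edge. The fix is to widen the strip. Take
$$ v(x+iy) = \frac{e^{\pi(x-b)/(2y_0)} \cos\bigl(\pi (y - y_0/2)/(2y_0)\bigr)}{\cos(\pi/4)}. $$
This is harmonic, since it is the real part of a suitable exponential of $z$. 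It is positive on $0 \leq y \leq y_0$, because the cosine argument stays in $[-\pi/4,\pi/4]$, and on the right edge it is at least $1$. So it dominates the right-edge harmonic measure, giving a bound of $O(e^{-\pi \mathrm{dist}(x_0,\partial I)/(2y_0)})$. This loses a factor of $2$ in the exponent.

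To recover the sharp rate $e^{-\pi d/y_0}$ I would instead use the exact strip harmonic measure. Let $S = \{0 < y < y_0\}$ be the full strip and $H = \{x < b\} \cap S$ the half-strip. In $H$, the harmonic measure of the segment $\{b\} \times [0,y_0]$ is given explicitly by the Fourier series
$$ \sum_{k \text{ odd}} \frac{4}{\pi k} e^{\pi k (x-b)/y_0} \sin(\pi k y/y_0). $$
The coefficients come from \eqref{fourier-sum-sine}, and the series is $O(e^{\pi(x-b)/y_0})$ once $b - x \gtrsim y_0$. Domain monotonicity then applies: exiting the rectangle through the right edge forces the Brownian motion to reach $\{x = b\}$ before leaving $H$ through its top or bottom, so $\omega_{x_0+i\eta}(\text{right edge}) \leq$ the half-strip harmonic measure. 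This yields $O(e^{-\pi (b-x_0)/y_0})$ when $b - x_0 \geq y_0$. When $b - x_0 < y_0$ the claimed bound is trivially $O(1)$.

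The main obstacle is this last step: getting the sharp constant $\pi$ and handling the corners. The Fourier series from \Cref{lem-square} resolves both.
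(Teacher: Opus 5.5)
Your proof is correct and follows the paper's argument: (i) and (ii) are the same comparisons (Brownian motion or the maximum principle against the half-plane Poisson integral and against the linear functions $y/y_0$, $1-y/y_0$), and in (iii) you use the same square-wave expansion \eqref{fourier-sum-sine} to build separated harmonic functions $e^{\pm \pi m x/y_0}\sin(\pi m y/y_0)$, arriving at the same bound $\sum_{m \text{ odd}} \frac{1}{m} e^{-\pi m \,\mathrm{dist}(x_0,\partial I)/y_0}$. The only difference is minor. The paper writes down the exact harmonic measure of both vertical edges in the rectangle itself, using $\cosh(m\pi x)/\cosh(m\pi T)$ with Fej\'er weights $(1-\frac{m}{M})_+$ so that finite sums can be fed into \eqref{harmonic} and dominated convergence applied. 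You instead dominate each edge separately by the half-strip harmonic measure via domain monotonicity; this needs the full series to be identified as the bounded harmonic function with the right boundary values, which you assert without justification (it is true, e.g.\ via the closed form $\frac{2}{\pi}\arg\frac{1+w}{1-w}$ with $w = e^{\pi(z-b)/y_0}$).
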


In applications we will apply this lemma in situations where the height $y_0$ of the rectangle will be shorter than the length $|I|$, and $x_0+i\eta$ is close to the bottom edge and far from the left and right edges.  In such cases, the lower edge of the rectangle will absorb almost all of the harmonic measure.

\begin{figure}
    \centering
    \includegraphics[width=0.75\textwidth]{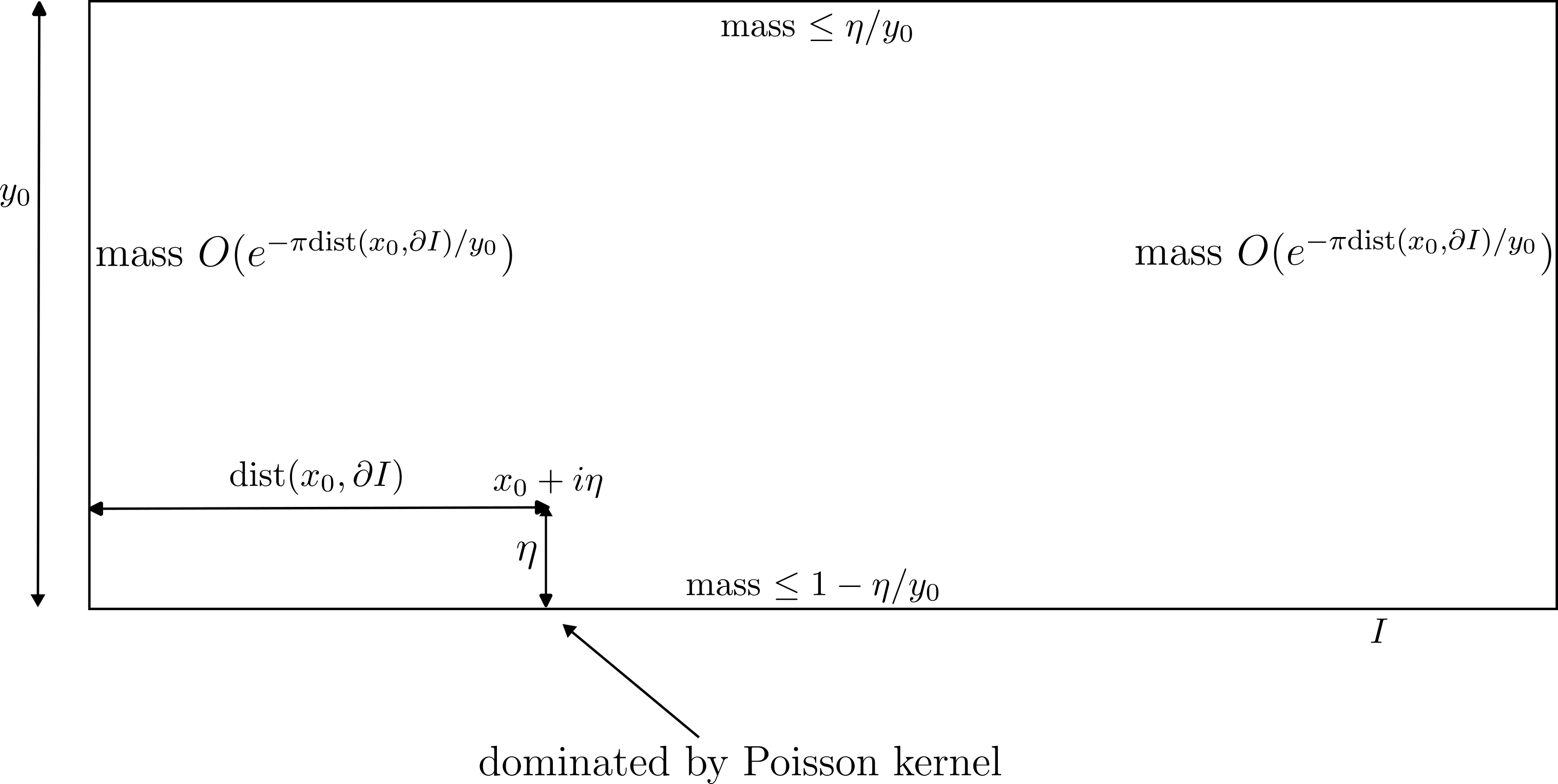}
    \caption{A schematic depiction of the distribution of harmonic measure on $\partial R^+(I,y_0)$ for a Brownian motion starting at $x_0+i\eta$.  }
    \label{fig-rect}
\end{figure}

\begin{proof}  For (i), this is clear since any Brownian motion starting at $x_0+i\eta$ which exits $R^+(I,y_0)$ through the lower edge, also exits the upper half-plane through the same location.

For the first part of (ii), apply \eqref{harmonic} to the harmonic function $1 - y / y_0$, and for the second part of (ii), apply \eqref{harmonic} to the harmonic function $y / y_0$.

For (iii), we rescale $y_0 = 1$ and $I = [-T,T]$.  We may assume that $\mathrm{dist}(x_0,\partial I) > 1$, as the claim is trivial otherwise. Using \Cref{lem-square}, we can check that the for any natural number $M$, the function
$$ u_M(x+iy) = \sum_{m \text{ odd}} \left(1-\frac{m}{M}\right)_+ \frac{4\sin(m\pi y)}{m\pi}  \frac{\cosh(m\pi x)}{\cosh(m \pi T)}$$
is harmonic on $R^+([-T,T],1)$, equals $0$ on the upper and lower edges, and converges almost everywhere to $1$ on the left and right edges in the limit $M \to \infty$.  Applying \eqref{harmonic} to this function and applying dominated convergence, we see that the mass assigned to the left and right edges is precisely $\lim_{M \to \infty} u_M(x_0+i\eta)$.  From the triangle inequality and routine estimation we have
$$ \lim_{M \to \infty} u_M(x_0+i\eta) \ll \sum_{m \text{ odd}} \frac{1}{m} e^{-\pi m \mathrm{dist}(x_0, \partial I) }.$$
Under the hypothesis $\mathrm{dist}(x_0, \partial I) \geq 1$, the right-hand side is $O( e^{-\pi \mathrm{dist}(x_0, \partial I)})$, giving the claim.\end{proof}

As a consequence of this lemma and the Nevanlinna two-constant theorem (see, e.g., \cite{garnett}), we obtain a local version of the
Phragm\'en--Lindel\"of three-lines theorem:

\begin{lemma}[Local Phragm\'en--Lindel\"of theorem]\label{tpl}  Let $f$ be a holomorphic function on a rectangle $R^+(I,y_0)$. Suppose that $f$ obeys the bound $|f(z)| \leq A$ on the upper and lower sides of $R^+(I,y_0)$, and $|f(z)| \leq AM$ on the left and right sides of $R^+(I,y_0)$, for some $A > 0$ and $M \geq 1$.  Then for any $x+iy \in R^+(I,y_0)$, one has
$$ |f(x+iy)| \leq A \exp\left( O(e^{-\pi \mathrm{dist}(x, \partial I) / y_0} \log M) \right).$$
\end{lemma}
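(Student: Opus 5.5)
Since $\log|f|$ is subharmonic on the rectangle, the plan is to bound it by its boundary values weighted by harmonic measure and then control the side weight with \Cref{harm-lemma}(iii). This is exactly the Nevanlinna two-constant argument. We may normalize $A=1$, replacing $f$ by $f/A$. If $f\equiv 0$ there is nothing to prove, so assume otherwise; then $u \coloneqq \log|f|$ is subharmonic on the interior of $R^+(I,y_0)$ and bounded above.

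We first check that $u$ is upper semicontinuous up to the boundary. Holomorphy on the closed rectangle (or on a neighbourhood of it) gives this directly. Otherwise we exhaust by slightly smaller rectangles $R^+(I_\delta,[\delta,y_0-\delta])$ and pass to the limit; in that case the hypotheses should be read as bounds on boundary limits, and we use the continuity of harmonic measure under such exhaustion.

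We then apply the sub-mean-value inequality \eqref{subharmonic} with respect to the harmonic measure $\omega_{x+iy}$ of $R^+(I,y_0)$:
$$ u(x+iy) \leq \int_{\partial R^+(I,y_0)} u\, d\omega_{x+iy}. $$
On the upper and lower edges we have $u \leq \log 1 = 0$, so those edges contribute at most $0$. On the vertical edges we have $u \leq \log M$, which is nonnegative since $M\ge 1$. Hence
$$ u(x+iy) \leq \omega_{x+iy}(\text{left and right edges}) \cdot \log M. $$
For $x+iy$ in the interior, \Cref{harm-lemma}(iii) bounds this harmonic measure by $O(e^{-\pi\,\mathrm{dist}(x,\partial I)/y_0})$, which gives $|f(x+iy)| \leq \exp(O(e^{-\pi \mathrm{dist}(x,\partial I)/y_0}\log M))$. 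Undoing the normalization yields the claim.

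It remains to treat boundary points. On the horizontal edges the bound $|f|\le A$ is already given and is stronger than the claim. On the vertical edges $\mathrm{dist}(x,\partial I)=0$, so the claimed bound reads $A M^{O(1)}$, which holds provided the implied constant is at least $1$.

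The only real subtlety is justifying \eqref{subharmonic} for $u=\log|f|$, which may equal $-\infty$ at zeros and is only upper semicontinuous. Two standard routes handle this. One can apply the inequality to $\max(u,-N)$, which is continuous if $f$ is continuous up to the boundary, and let $N\to\infty$. Alternatively, one can compare $u$ with the harmonic function $h(z)=\log M\cdot\omega_z(\text{vertical edges})$ via the maximum principle for the subharmonic function $u-h$: it is $\le 0$ on the boundary away from the four corners, where $h$ has jump discontinuities, and those finitely many points are polar. It is fine to treat this step as routine and cite the two-constant theorem.
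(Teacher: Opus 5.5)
Your proposal is correct and follows the paper's proof exactly. It applies the sub-mean-value inequality \eqref{subharmonic} to the subharmonic function $\log(|f|/A)$, which is at most $0$ on the horizontal sides and at most $\log M$ on the vertical sides, and then bounds the harmonic measure of the vertical sides by \Cref{harm-lemma}(iii). Your extra care with the $-\infty$ values of $\log|f|$, upper semicontinuity, and points on the boundary of the rectangle fills in routine details that the paper leaves implicit.
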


\begin{proof}\footnote{An initial version of this proof was provided by ChatGPT.}  The function $\log \frac{|f|}{A}$ is subharmonic on $R^+([-T,T],1)$, and bounded by $0$ on the top and bottom sides of $R^+([-T,T],1)$ and by $\log M$ on the left and right sides.  From \eqref{subharmonic} and \Cref{harm-lemma}(iii) we conclude that
$$\log \frac{|f|}{A}(x+iy) \ll e^{-\pi \mathrm{dist}(x, \partial I) / y_0} \log M,$$
giving the claim.
\end{proof}

\begin{remark} By setting $I = [-T,T]$ and then sending $T \to \infty$, one recovers the familiar global Phragm\'en--Lindel\"of three-lines theorem, which asserts that if a function is holomorphic on a strip $\{ z: 0 \leq \mathrm{Im} z \leq y_0\}$, bounded by $A$ in magnitude on the edges of the strip, and grows slower than $\exp( e^{(\pi-\eps) |z|/y_0})$ for some $\eps>0$ in this strip, then it is bounded by $A$ in the interior of the strip.
\end{remark}

\section{Proof of local Bernstein estimates}\label{boas-sec}

In this section we prove \Cref{main-2}. Our arguments broadly follow those in \cite{duffin}.

We can rescale so that $A=1$, $\lambda=1$, and the point $x$ to which \eqref{la} is being tested is equal to $0$.  Thus $I$ now contains $[-L,L]$, and we have
\begin{itemize}
\item (Lower edge) $f(x)$ is real with $|f(x)| \leq 1$ for all $x \in I$.
\item (Upper edge) $e^{-y_0} |f(x+iy_0)| \leq 1$ for all $x \in I$.
\item (Left and right edges) $e^{-y} |f(x+iy)| \leq \exp (e^{\frac{\pi L}{4y_0}})$ for all $x \in \partial I$ and $0 \leq y \leq y_0$.
\end{itemize}
As the claim (i) follows from (ii), and $\exp(O( e^{-\frac{\pi L}{4y_0}} )) = 1 + O( e^{-\frac{\pi L}{4y_0}})$, our goals can now be simplified to the following:

\begin{itemize}
    \item[(ii)]  One has
$$
 |f(0) + i f'(0)| \leq \exp\left(O( e^{-\frac{\pi L}{4y_0}} )\right) \left(1 + O\left(\frac{1}{\min(y_0,L)}\right)\right).
$$
    \item[(iii)]  One has
$$
 |f(iy)| \leq \exp\left(O( e^{-\frac{\pi L}{4y_0}} )\right) \cosh \left( \left(1 + O\left(\frac{1}{\min(y_0,L)}\right) \right) y \right).
$$
    \item[(iv)] If $f(0) \neq 0$, then the number of zeroes of $f$ in the disk $D(0,r)$ is at most $O( 1 + r + \log \frac{1}{|f(0)|})$ for any $0 < r < \min(y_0,L)/4$.
\end{itemize}

Applying \Cref{tpl} to the holomorphic function $f(z) e^{iz}$, we conclude that
$$
 e^{-y} |f(x + iy)| \leq \exp\left(O( e^{-\frac{\pi L}{4y_0}} )\right)
$$
for all $x+iy \in R^+([-L/2,L/2], y_0)$.  Dividing $f$ by $\exp\left(O( e^{-\frac{\pi L}{4y_0}} )\right)$, we may now assume that
\begin{equation}\label{fiy}
 |f(x + iy)| \leq e^y
\end{equation}
for all $x+iy \in R^+([-L/2,L/2], y_0)$.  In particular, if $r < \min(y_0,L)/4$ one has
$$ |f(x+iy)| \leq e^{2r}$$
for all $x+iy \in D(0,2r)$, and the claim (iv) then follows from Jensen's formula.  We may simplify the remaining two claims (ii), (iii) to
\begin{equation}\label{la0}
 |f(0) + i f'(0)| \leq 1 + O\left(\frac{1}{\min(y_0,L)}\right)
\end{equation}
and
\begin{equation}\label{la1}
     |f(iy)| \leq \cosh \left( \left( 1 + O\left(\frac{1}{\min(y_0,L)}\right) \right) y\right)
\end{equation}
for $0 \leq y \leq y_0$.

By the Schwartz reflection principle, we can now extend $f$ to the rectangle $R([-L/2,L/2], y_0)$
with the bounds
$$ |f(x \pm iy)| \leq e^{|y|}$$
in this rectangle. In particular $|f(0)| \leq 1$.

If $\min(y_0,L) \ll 1$, then from the Cauchy inequalities (applied to a disk of radius $\min(y_0,L)/2$ around the origin) one has $f'(0) \ll \frac{1}{\min(y_0,L)}$, and the claim \eqref{la0} follows from the triangle inequality, while the bound \eqref{la1} in this case follows from \eqref{fiy}.  Thus we may assume that $y_0, L$ are larger than any given absolute constant.

Let $\eps \coloneqq \frac{C}{\min(y_0,L)}$ for a large absolute constant $C>0$.  We introduce the slightly rescaled, damped function
\begin{equation}\label{feps-def}
 f_{\eps}(z) \coloneqq f\left(\frac{z}{1+\eps}\right) \sinc\left(\frac{\eps}{1+\eps} z\right).
\end{equation}
This function is holomorphic on the rectangle $R([-L/2,L/2], (1+\eps) y_0)$
and we may bound
\begin{align*}
f_{\eps}(x+iy) &\ll \frac{e^{\frac{|y|}{1+\eps}} e^{\frac{\eps}{1+\eps} |y|}}{\eps (|x| + |y|)} \\
&= \frac{e^{|y|} \min(y_0,L)}{C(|x| + |y|)} \\
&\leq 2 \frac{\min(y_0,L)}{C(|x| + |y|)} \cosh y
\end{align*}
on this rectangle.  In particular, if $C$ is large enough, we have
\begin{equation}\label{f14}
 |f_{\eps}(x+iy)| < \frac{1}{4} \cosh y
\end{equation}
whenever $x+iy$ lies in $R([-L/2,L/2], (1+\eps) y_0)$ and is within distance $20\pi$ (say) to the boundary. Furthermore, for $-L/2 \leq x \leq L/2$, we may use the bound $|\sin(t)| \leq t$ to bound
\begin{equation}\label{feps}
 |f_{\eps}(x)| \leq 1
\end{equation}

Let $k$ be the largest integer such that $(k+10) \pi \leq L/2$, then $k$ can be assumed to be larger than any given absolute constant.  On the boundary of the rectangle
$$ R\left([\theta-k\pi, \theta+k\pi], (1+\eps) y_0\right)$$
the quantity $|\cos(z-\theta)|$ is equal to $\cosh(\mathrm{Im} y)$ on the vertical sides, and at least $\sinh(\mathrm{Im} y_0)$ on the horizontal siodes.  In either case we see from \eqref{f14} and the large nature of $y_0$ that
$$ |f_{\eps}(z)| < \frac{1}{2} |\cos(z - \theta)|.$$
Since $\cos(z-\theta)$ has exactly $2k$ zeroes in this rectangle, we conclude from Rouche's theorem that $\cos(z-\theta) - \alpha f_\eps(z)$ also has exactly $2k$ zeroes in this rectangle for any $0 <\alpha<1$.  On the other hand, on each of the $2k$ intervals $[\theta + j\pi, \theta + (j+1) \pi]$ for $-k \leq j < k$, we see from \eqref{feps} that $\cos(z-\theta) - \alpha f_\eps(z)$ is real-valued and changes sign; thus, by the intermediate value theorem, each such interval contains a zero in its interior.  We conclude that these are the complete set of zeroes, and that all the zeroes are simple.  In particular, $\cos(z-\theta) - \alpha f_\eps(z)$ cannot have a double zero at the origin for any $\theta \in [0,2\pi]$, or equivalently the quantity $f_\eps(0) + i f'_\eps(0)$ avoids the exterior region
$$\left\{ \frac{\cos \theta + i \sin \theta}{\alpha} : 0 \leq \theta \leq 2\pi; 0 < \alpha < 1\right\}$$
and thus
$$ |f_\eps(0) + i f'_\eps(0)| \leq 1.$$
From \eqref{feps-def} we thus have
$$ \left|f(0) + i \frac{f'(0)}{1+\eps}\right| \leq 1,$$
which places $f(0)+if'(0)$ in an ellipse around the origin of semi-major axis $1+\eps$ and semi-minor axis $1$, giving \eqref{la0}. This also gives the $y=0$ case of \eqref{la1}. For $y \neq 0$, we know that $\cos((1+\eps)iy-\theta) - \alpha f_\eps((1+\eps)iy)$ cannot vanish for any $0 < \alpha < 1$ and $\theta \in \R$.  From the trigonometric identity
$$ \cos((1+\eps)iy-\theta) = \cosh((1+\eps) y) \cos \theta + i \sinh((1+\eps) y) \sin \theta$$
we see that the curve
$$ \{ \cos((1+\eps)iy-\theta): 0 \leq \theta \leq 2\pi \}$$
is an ellipse with semi-major axis $\cosh((1+\eps) y)$ and semi-minor axis $\sinh((1+\eps) y)$ centered at the origin; see \Cref{fig-ellipse}.  From the previous discussion, we know that $\alpha f_\eps((1+\eps)iy)$ cannot lie on this ellipse for any $0 < \alpha < 1$, hence $f_\eps((1+\eps)iy)$ must lie on the boundary or interior of this ellipse.  In particular, $|f_\eps((1+\eps)iy)| \leq \cosh((1+\eps) y)$.
Since $\frac{\sinh \eps y}{\eps y} \geq 1$, we see from \eqref{feps-def} that $|f(iy)| \leq \cosh((1+\eps) y)$, as required.

\begin{figure}
    \centering
\includegraphics[width=0.5\textwidth]{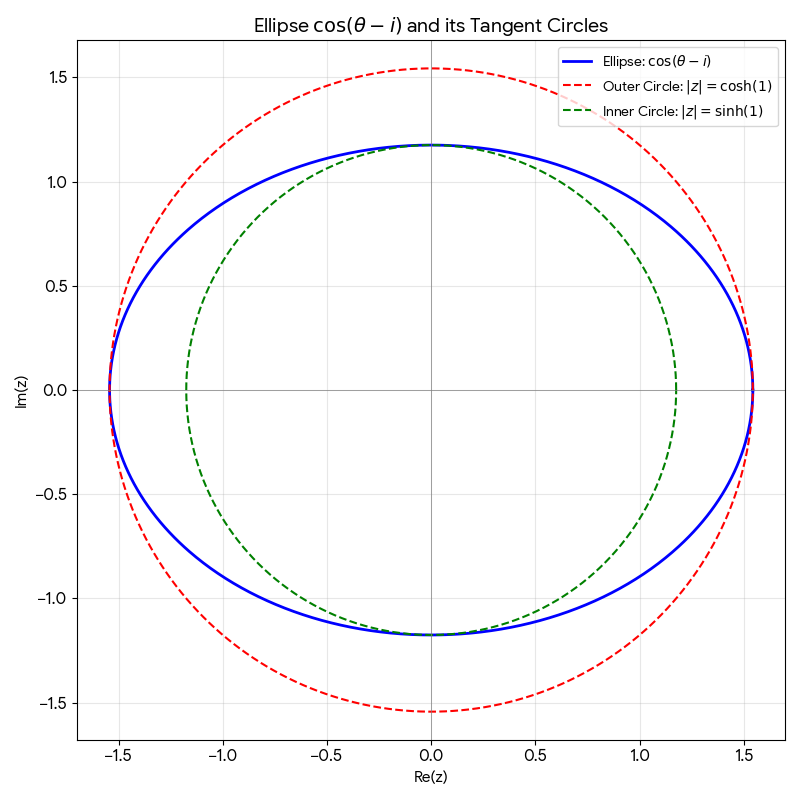}
    \caption{The ellipse $\{ \cos(ir-\theta): 0 \leq \theta \leq 2\pi \}$ lies between the disks $D(0,\sinh r)$ and $D(0,\cosh r)$; we illustrate this here with $r=1$.  (Image  generated by Gemini.)}
    \label{fig-ellipse}
\end{figure}

\begin{remark} The above argument in fact gives slightly sharper control on $f(x+iy)$, placing it in a certain ellipse; cf. the discussion in \cite[p. 240]{duffin}, as well as \cite{bern3}.
\end{remark}

\begin{remark} An anonymous commenter on my blog has provided an alternate proof of \Cref{main-2}, following similar lines to the proof of \Cref{tpl}, which we sketch as follows.  Again we normalize $A=\lambda=1$ and $x=0$. Let $\rho(w) = \log |w + \sqrt{w^2-1}| = \Re \operatorname{arccosh} w$ be the Green's function of $\C \backslash [-1,1]$ with pole at infinity, then the function $\rho(f(x+iy)) - y$ is subharmonic on the rectangle $R^+(I, y_0)$, vanishing on the lower edge, essentially nonpositive on the upper edge, and upper bounded by $O(e^{\frac{\pi}{4y_0}})$ on the side edges.  Applying \eqref{subharmonic} and \Cref{harm-lemma}(iii), we can obtain good upper bounds on $\rho(f(x+iy))-y$, which by the Cauchy--Riemann equations for $\operatorname{arccosh} f$ can also be used to upper bound the derivative of $\rho(f(x))$.  The claims (i)-(iii) of \Cref{main-2} can then be recovered from the chain rule.
\end{remark}

\section{Preliminary bounds}

In preparation for proving both components of \Cref{main}, we first investigate what one can say about the polynomial $P$, the measure $\mu$, and the logarithmic potential $U_\mu$ assuming a weak upper bound on $\lambda(x)$ in the interval $I$.

\begin{theorem}[Preliminary bounds]\label{prelim}  Let $I$ be a fixed interval in $[-1,1]$. Let $n$ be sufficiently large, and let $x_1,\dots,x_n$ be distinct points in $[-1,1]$.
Assume we have the weak upper bound
\begin{equation}\label{lambda-bound}
\sup_{x \in I} \lambda(x) \ll n^{O(1)}
\end{equation}
We allow all implied constants to depend on $I$, and the implied constants in \eqref{lambda-bound}.
\begin{itemize}
    \item[(i)] (Near-constant log-potential on $I$) There exist a real number
\begin{equation}\label{alpha}
 \alpha = O(1)
\end{equation}
such that
\begin{equation}\label{deltax}
 \delta(x) e^{-n\alpha} \leq |P(x)| \ll n^{O(1)} e^{-n\alpha}
\end{equation}
or equivalently (by \eqref{umu-def})
\begin{equation}\label{umu-bound} \alpha - O\left( \frac{\log n}{n} \right) \leq U_\mu(x) \leq \alpha + O\left( \frac{\log \frac{1}{\delta(x)}}{n} \right)
\end{equation}
for all $x \in I$, where $\delta$ is the distance function
\begin{equation}\label{delta-def}
 \delta(z) \coloneqq \min_{1 \leq i \leq n} |z-x_i|.
\end{equation}
    \item[(ii)]  (Lipschitz continuity of density) There exists a continuous function $\rho \colon I'\to \R$ obeying the Lipschitz bounds
\begin{equation}\label{goa}
 \rho(x_0) \asymp_{I'} 1; \quad \rho(x) = \rho(x_0) + O_{I'}(|x-x_0|)
\end{equation}
for all $x, x_0 \in I' \Subset I$, such that one has the estimate
\begin{equation}\label{umud}
 U_\mu(x \pm i\eta) = \alpha - \pi \eta \rho(x) + O_{I'}\left( \frac{\log n}{n} + \eta^3 \right)
\end{equation}
whenever $x \in I' \Subset I$ and $\eta \geq \frac{\log n}{n}$.  (See \Cref{fig-arcsine} and \Cref{fig-complex}.)
    \item[(iii)] (Density of state upper bound) If $I' \Subset I$, we have
\begin{equation}\label{mueta}
 \mu( [x-\eta, x+\eta] ) \ll_{I'} \eta + \frac{\log n}{n}
\end{equation}
when $x \in I'$ and $\eta > 0$.
    \item[(iv)] (Density of state asymptotic) If $I' \Subset I$, we have
\begin{equation}\label{muj}
 \mu(J) = \int_J \rho(x)\ dx + O_{I'}\left( \frac{|J|}{K} + \frac{K^2 \log n}{n} \right)
\end{equation}
for any subinterval $J$ of $I'$ and any $K \geq 1$.  (See \Cref{fig-arcsine}.)
\end{itemize}
\end{theorem}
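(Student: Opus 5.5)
The plan is to extract everything from the single inequality $|\ell_k(x)| \le \lambda(x) \ll n^{O(1)}$ for $x \in I$, combined with the identity $\sum_k \ell_k(x) = 1$, which is \eqref{interp} applied to $Q\equiv 1$.

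\textbf{Step (i).} For any $x \in I$ and any $k$, \eqref{lk-form} and \eqref{lambda-bound} give
$$|P(x)| \le \lambda(x) |P'(x_k)| |x-x_k| \ll n^{O(1)} |P'(x_k)|.$$
Conversely, since $\sum_k \ell_k(x)=1$, some $k$ has $|\ell_k(x)| \ge 1/n$. For that $k$,
$$|P(x)| \ge \frac{|P'(x_k)| |x-x_k|}{n} \ge \frac{\delta(x)}{n} \min_j |P'(x_j)|.$$
So I will define $e^{-n\alpha}$ to be $\min_j |P'(x_j)|/n$, which yields \eqref{deltax}; \eqref{umu-bound} then follows from \eqref{umu-def}. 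For $\alpha = O(1)$ I use two bounds:
\begin{itemize}
\item $|P|\le 2^n$ on $I$ gives $\alpha \ge -\log 2 - o(1)$;
\item averaging the lower bound in \eqref{umu-bound} over $I$, and using that $\int_I U_\mu \ll 1$ because the logarithmic kernel is locally integrable, gives $\alpha \ll 1$.
\end{itemize}

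\textbf{Step (ii).} Set $h(z) := \alpha - U_\mu(z)$. This is harmonic off $[-1,1]$, and by (i) its boundary values on $I$ are $O(\log n/n)$ from one side and $O(\log(1/\delta)/n)$ from the other. On a rectangle $R^+(I'',y_0)$ with $I' \Subset I'' \Subset I$, I split $h = h_1 + h_2$:
\begin{itemize}
\item $h_1$ is the harmonic function with boundary data $0$ on the bottom edge and $h$ on the other edges;
\item $h_2$ is the harmonic function with boundary data $h$ on the bottom edge and $0$ elsewhere.
\end{itemize}
By \Cref{harm-lemma}(i) and the fact that $\int \mathcal{P}_\eta(x-t)\log\frac{1}{\delta(t)}\,dt \ll \log n$ once $\eta \ge \log n/n$ (there are only $n$ logarithmic singularities), we get $h_2 = O(\log n/n)$. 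The function $h_1$ reflects to a harmonic function that is odd in $y$ on $R(I'',y_0)$, with bounded data. Its Taylor expansion in $y$ is $h_1(x+iy) = \pi y\rho(x) + O(y^3)$, where $\pi\rho := \partial_y h_1|_{y=0}$. Interior derivative estimates then give the Lipschitz bound in \eqref{goa}. By the symmetry $U_\mu(\bar z)=U_\mu(z)$ the same expansion holds at $x - i\eta$, which gives \eqref{umud}.

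For $\rho \asymp 1$, I note that $h_1$ is, up to $O(\log n/n)$, a nonnegative harmonic function vanishing on the bottom edge. Nonnegativity follows from superharmonicity of $U_\mu$ together with (i), via the maximum principle. Harnack's inequality for such boundary-vanishing positive functions then gives $\rho(x)\asymp_{I'}\rho(x_0)$ for $x, x_0 \in I'$. For an absolute lower bound, I would argue by contradiction: if $\rho$ were tiny on $I'$, then $h$ would be nearly zero on a whole box. By Jensen/Green arguments the nodes would then have mass $o(1)$ near $I'$, and $U_\mu$ would be a strictly convex-type harmonic function of $x$ along $I'$, contradicting its near-constancy. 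I expect this non-degeneracy, $\rho \gg 1$, to be the main obstacle. What I would try first is to compare $h(x+iy_0)$ with the Green function of $\mathbb{C}\setminus[-1,1]$, using $\mu([-1,1])=1$.

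\textbf{Step (iii).} I use the positive-kernel identity
$$\int \mathcal{P}_\eta(x-t)\,U_\mu(t)\,dt - U_\mu(x+i\eta) = \int K_\eta(x,s)\,d\mu(s),$$
where $K_\eta(x,s)\ge 0$ and $K_\eta(x,s)\gg 1$ for $|s-x|\le\eta$. The left side is at most
$$\frac{C\log n}{n} + \pi\eta\rho(x) + O(\eta^3)$$
by (i) and (ii). This gives \eqref{mueta} for $\eta \ge \log n/n$, and the case of smaller $\eta$ follows by monotonicity in $\eta$.

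\textbf{Step (iv).} I use
$$(\mu*\mathcal{P}_\eta)(x) = -\frac{1}{\pi}\,\partial_y U_\mu(x+i\eta).$$
Applying Cauchy estimates at scale $\eta$ to \eqref{umud}, I get $(\mu*\mathcal{P}_\eta)(x) = \rho(x) + O(\log n/(n\eta) + \eta^2)$. Integrating over $J$ and using \eqref{mueta} to control the $O(\eta)$-neighborhoods of $\partial J$, I then choose $\eta$ as a function of $K$, comparable to $1/K$ in the relevant regime. This balances the errors into the shape $|J|/K + K^2\log n/n$ of \eqref{muj}.
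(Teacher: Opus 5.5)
Part (i) is correct. Normalizing by $\min_j |P'(x_j)|$ instead of $\sum_k 1/|P'(x_k)|$ only shifts $\alpha$ by $O(\log n/n)$. Your splitting $h = h_1 + h_2$ is also a legitimate alternative to the paper's explicit formula \eqref{rho-def}: it gives \eqref{umud}, the Lipschitz bound and $\rho \ll 1$. However, three steps fail or are missing.

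First, the identity you use in (iii) is vacuous. $U_\mu$ is harmonic in the upper half-plane with logarithmic growth, so $\int \mathcal{P}_\eta(x-t)\, U_\mu(t)\,dt = U_\mu(x+i\eta)$ \emph{exactly}. This is the Poisson representation \eqref{poisson}, which the paper uses to obtain \eqref{umo}. Consequently your kernel $K_\eta(x,s) = \int \mathcal{P}_\eta(x-t) \log\frac{1}{|t-s|}\,dt - \log \frac{1}{|x+i\eta-s|}$ is identically zero, not $\gg 1$ for $|s-x| \leq \eta$. A kernel that genuinely sees $\mu$ comes from comparing two heights, $U_\mu(x+i\eta/2) - U_\mu(x+i\eta) = \frac{1}{2} \int \log \frac{(x-s)^2+\eta^2}{(x-s)^2+\eta^2/4}\,d\mu(s)$. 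Alternatively, use $-\partial_\eta U_\mu(x+i\eta) = \pi (\mathcal{P}_\eta * \mu)(x)$, as in \eqref{poisson-mu}.

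Second, the lower bound $\rho \gg 1$ is not proved; the contradiction/Harnack/convexity route is only an outline. Also, positivity of $h_1$ does not follow from superharmonicity of $U_\mu$, which bounds $h$ from above. It follows instead from $U_\mu(x+iy)$ being decreasing in $|y|$, together with (i). The missing idea is elementary: all nodes lie within distance $2$ of $x_0$, so $\pi (\mathcal{P}_y * \mu)(x_0) \geq \frac{y}{4+y^2}$. In your framework, oddness of $h_1$ gives $\partial_y h_1(x_0+iy) = \pi \rho(x_0) + O(y^2)$, and interior estimates give $\partial_y h_2 = O(\frac{\log n}{ny})$. Hence $\frac{y}{4+y^2} \leq \pi\rho(x_0) + O(y^2 + \frac{\log n}{ny})$, and a small fixed $y$ yields $\rho(x_0) \gg 1$. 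The error here must be $o(y)$, which is why the oddness of $h_1$ matters. The paper does the same thing with the finite difference $U_\mu(x_0 + i\frac{\log n}{n}) - U_\mu(x_0 + i\eta) \gg \eta^2$.

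Third, in (iv) the choice $\eta \asymp 1/K$ does not give \eqref{muj}. Controlling the $O(\eta)$-neighbourhoods of $\partial J$ by \eqref{mueta} costs about $\eta + \frac{\log n}{n}$, independently of $|J|$; for the untruncated, heavy-tailed Poisson kernel it is even $\eta \log \frac{1}{\eta}$. With $\eta \asymp 1/K$ this dwarfs $\frac{|J|}{K} + \frac{K^2 \log n}{n}$ for short $J$. That is exactly how \eqref{muj} is used later, e.g. with $|J| \asymp n^{-0.2}$ and $K = n^{0.1}$ in \Cref{macro}.

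The paper first truncates the Poisson kernel to $[x-K\eta, x+K\eta]$. This costs $O(1/K)$ pointwise (by \eqref{mueta} and a dyadic decomposition), which integrates to $O(|J|/K)$, and the endpoint loss is then only $O(K\eta + \frac{\log n}{n})$. It then takes $\eta = \frac{2K \log n}{n}$, assuming $K \leq \sqrt{n/\log n}$ (the other case is trivial). This makes $|J|\frac{\log n}{n\eta} \asymp \frac{|J|}{K}$, $|J|\eta \ll \frac{|J|}{K}$ and $K\eta \asymp \frac{K^2\log n}{n}$.
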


\begin{example} In the case of Chebyshev nodes (\Cref{cheby}) one can take $\alpha=\log 2 + o(1)$, and $\rho$ to be the arcsine distribution $\rho(x) = \rho_{{\mathrm{as}}}(x) = \frac{1}{\pi \sqrt{1-x^2}}$ (up to $o(1)$ errors).  See \Cref{fig-arcsine} and \Cref{fig-pot}.
\end{example}

\begin{figure}
    \centering
\includegraphics[width=0.75\textwidth]{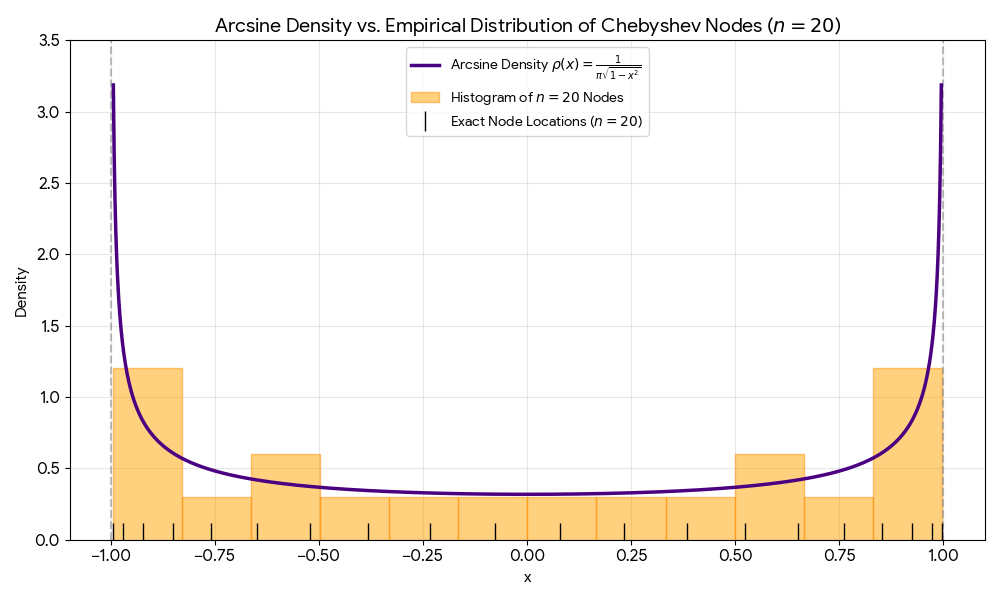}
    \caption{The arcsine distribution $\rho = \rho_{\mathrm{as}}$, which is Lipschitz continuous and comparable to $1$ in the bulk of $[-1,1]$, but develops singularities at the endpoints. This is superimposed with a normalized histogram of the Chebyshev nodes in \Cref{fig-cheby}.  (Image generated by Gemini.)}
    \label{fig-arcsine}
\end{figure}

\begin{figure}
    \centering
\includegraphics[width=0.75\textwidth]{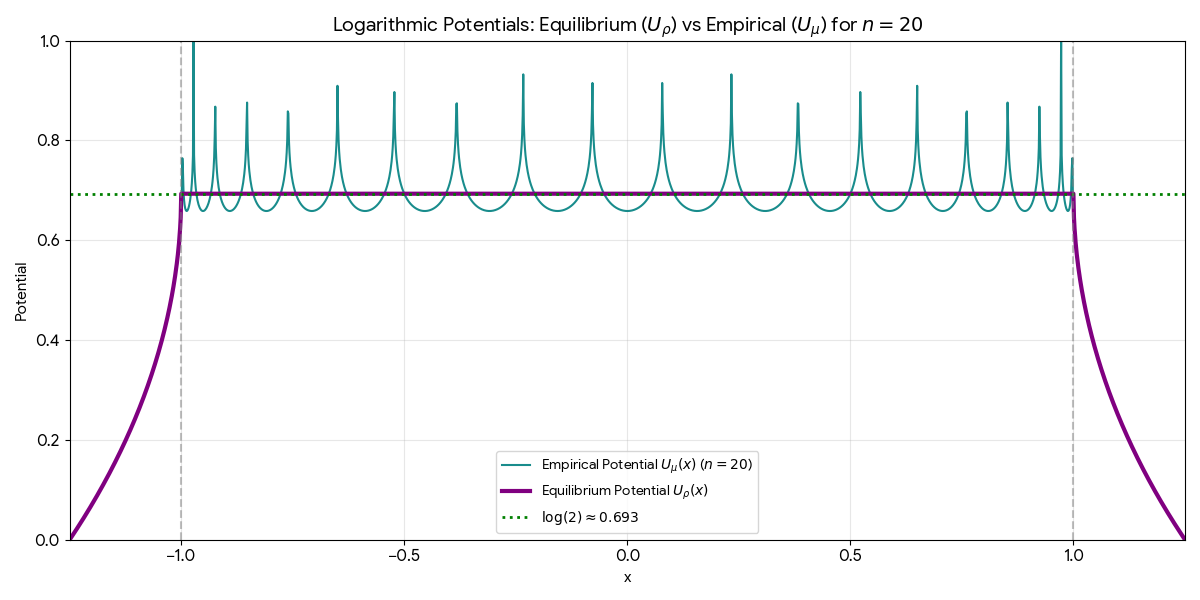}
    \caption{The logarithmic potential $U_\mu(x)$ for the polynomial in \Cref{fig-cheby}, compared against the potential $U_\rho(x) = \int_\R \log \frac{1}{|x-y|} \rho(y)\ dy$ for the arcsine distribution $\rho = \rho_{\mathrm{as}}$, which in this case is identically equal to $\alpha = \log 2$ on $[-1,1]$ (and equal to $\alpha - \log(|x| + \sqrt{x^2-1})$ outside of this interval).  The potential $U_\mu$ diverges logarithmically as the distance $\delta(x)$ to the nearest root of $P$ shrinks to zero.  (Image generated by Gemini.)}
    \label{fig-pot}
\end{figure}

\begin{figure}
    \centering
\includegraphics[width=0.75\textwidth]{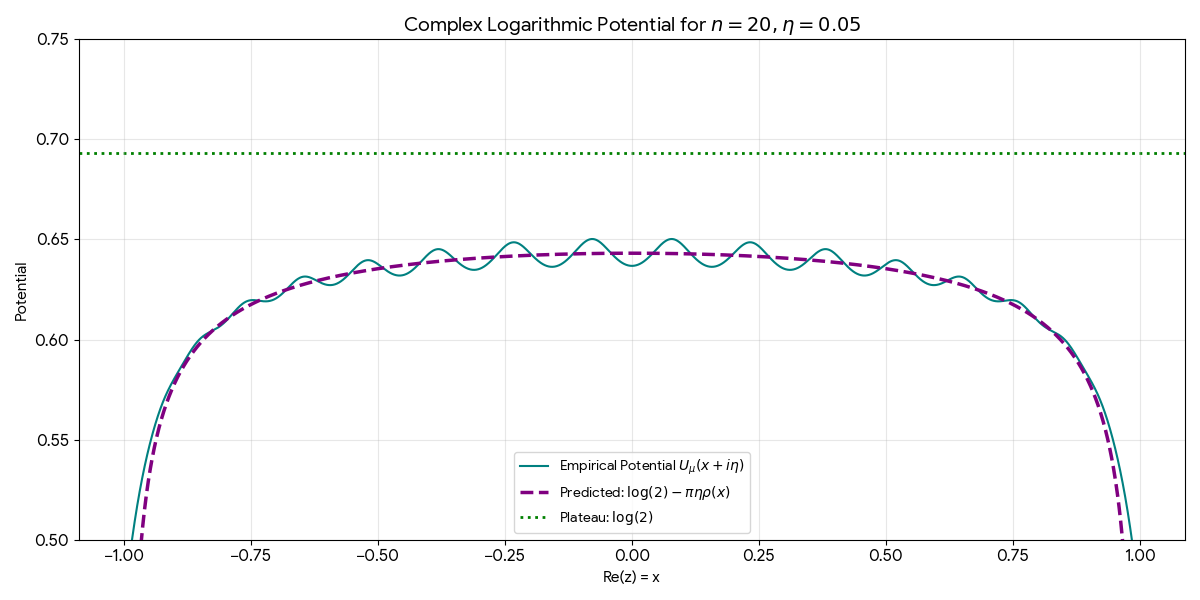}
    \caption{The logarithmic potential $U_\mu(x+i\eta)$ for the polynomial in \Cref{fig-cheby} and $\eta = 1/n$, compared with the predicted value of $\alpha - \pi \eta \rho(x)$ for $x$ in the bulk of $[-1,1]$ and $\rho = \rho_{\mathrm{as}}$.   The non-zero value of $\eta$ acts to damp the oscillations in the logarithmic potential caused by the discrete nature of the nodes; in particular, the potential is now smooth (and in fact harmonic in $x+i\eta$).  In practice, these oscillations become negligible once $\eta$ is significantly larger than $\log n/n$. (Image generated by Gemini.)}
    \label{fig-complex}
\end{figure}

\begin{proof} We first prove (i), following an argument provided to us by Sothanaphan \cite{gpt} using GPT.  Let $x \in I$.
Applying \eqref{interp} with $Q=1$ and then using the triangle inequality, we have the lower bound
\begin{equation}\label{lambda-lower}
\lambda(x) \geq 1,
\end{equation}
hence by \eqref{lambda-bound} we have $\lambda(x) = n^{O(1)}$.
If we define
\begin{equation}\label{alpha-def}
 \alpha \coloneqq \frac{1}{n} \log \left(\sum_{k=1}^n \frac{1}{|P'(x_k)|}\right),
\end{equation}
then from \eqref{lamax}, \eqref{delta-def} we have
$$ \frac{|P(x)|}{2} e^{n\alpha} \leq \lambda(x) \leq \frac{|P(x)|}{\delta(x)} e^{n\alpha}$$
giving \eqref{deltax}, and hence \eqref{umu-bound}.  From the local square-integrability of the logarithm $\log \frac{1}{|x|}$, \eqref{umu-def} and the triangle inequality, we have the bound
\begin{equation}\label{umisc}
 \int_{[-2,2]} U_\mu(x)^2\ dx \ll 1
\end{equation}
and hence by Cauchy-Schwarz
\begin{equation}\label{ul1}
\int_{[-2,2]} U_\mu(x)\ dx \ll 1.
\end{equation}
In particular, from Markov's inequality for integrals we have $U_\mu(x) = O(1)$ for all $x$ in a subset of $I$ of measure $\asymp 1$.  By \eqref{umu-bound}, one has $U_\mu(x) = \alpha + O(1)$ for at least one such $x$, giving \eqref{alpha} as required. This completes the proof of (i).

Now we prove (ii). In the region $\eta \geq 1$, direct calculation shows that
$$ U_\mu(x \pm i\eta) \ll_{I'} \log ( 1 + \eta )$$
for $x \in I'$, and the claim \eqref{umud} then follows from \eqref{alpha} (and \eqref{goa}).  Hence we may restrict to the case when $\frac{\log n}{n} \leq \eta < 1$.

As the function $U_\mu$ is harmonic outside of $x_1,\dots,x_n$ and has logarithmic growth at infinity, we may apply \eqref{poisson} (using standard limiting arguments to deal with the (mild) logarithmic singularities at $x_1,\dots,x_n$) to obtain
$$ U_\mu(x \pm i\eta) = \int_\R \Poisson_\eta(x-v) U_\mu(v)\ dv$$
for any $x \in \R$ and $\eta > 0$.  As the Poisson kernel $\Poisson_\eta$ has unit mass, we can rewrite this as
\begin{equation}\label{umo}
 U_\mu(x \pm i\eta) = \alpha + \int_\R \Poisson_\eta(x-v) (U_\mu(v) - \alpha)\ dv.
\end{equation}
We can estimate the portion of the integral in the region
$$ v \in I; \quad \delta(v) \geq n^{-10}$$
using \eqref{umu-bound}, and in the region
$$ v \in I; \quad \delta(v) < n^{-10}$$
using \eqref{umisc} and Cauchy--Schwarz, to conclude that
$$ U_\mu(x + i\eta) = \alpha - \pi \eta \rho( x + i \eta ) + O_{I'}\left( \frac{\log n}{n} \right)$$
in the region
$$ x \in I'; \quad \frac{\log n}{n} \leq \eta < 1,$$
where we define the (approximate) density of states $\rho$ by the formula
\begin{equation}\label{rho-def}
 \rho(x + i \eta) \coloneqq -\frac{1}{\pi^2} \int_{\R \backslash I} \frac{U_\mu(v) - \alpha}{(x-v)^2 + \eta^2} \ dv.
\end{equation}
From the separation between $I'$ and $\R \backslash I$, together with the bounds \eqref{alpha} and the easy bound
\begin{equation}\label{umu-large}
    U_\mu(z) \ll \log |z|
\end{equation}
whenever $|z| \geq 2$, we obtain the upper bound portions of \eqref{goa} as well as the more general Lipschitz bound
\begin{equation}\label{G-lip}
    \rho(x + i \eta) = \rho(x_0) + O_{I'}(\eta^2 + |x-x_0|)
\end{equation}
for any reference point $x_0 \in I'$.  The claim \eqref{umud} then follows. It remains to show that $\rho(x_0) \gg_{I'} 1$.  Let $\eta > 2\frac{\log n}{n}$ be chosen later.  From \eqref{umud} and the triangle inequality one has
$$ U_\mu\left(x_0 + i\frac{\log n}{n}\right) - U_\mu(x_0+i\eta) = \left(\eta - \frac{\log n}{n}\right) \rho(x_0) + O_{I'}\left( \frac{\log n}{n} + \eta^3 \right).$$
By \eqref{umu-def}, we have
$$ U_\mu\left(x_0 + i\frac{\log n}{n}\right) - U_\mu(x_0+i\eta) = \frac{1}{n} \sum_k \log \frac{(x_0-x_k)^2 + \eta^2}{(x_0-x_k)^2 + \frac{\log^2 n}{n^2}}.$$
Since $x_0 - x_k = O(1)$ and $\eta > 2 \frac{\log n}{n}$, we have
$$ \log \frac{(x_0-x_k)^2 + \eta^2}{(x_0-x_k)^2 + \frac{\log^2 n}{n^2}} \gg \eta^2$$
for all $k$, and hence (since $\eta - \frac{\log n}{n} \asymp \eta$)
$$ \rho(x_0) \gg \eta - O_{I'}\left( \frac{\log n}{n\eta} + \eta^2 \right).$$
Choosing $\eta \asymp_{I'} 1$ appropriately, we obtain the claim.

Now we prove (iii).  The function $U_\mu(x+i\eta)$ is harmonic in the upper half-plane, as is $\alpha - \pi \eta \rho(x_0)$ for any fixed $x_0$.  Applying elliptic regularity estimates on a disk of radius $c \eta |I|$ centered at $x+i\eta$ (as well as \eqref{goa}) for a sufficiently small $c>0$, we conclude that
\begin{equation}\label{umud-deriv}
    -\partial_\eta U_\mu(x+i\eta) = \pi \rho(x) + O_{I'}\left( \frac{\log n}{n \eta} + \eta \right)
\end{equation}
whenever
\begin{equation}\label{umud-deriv-region}
    x \in I'; \quad 2 \frac{\log n}{n} \leq \eta \leq \frac{1}{2}.
\end{equation}
Direct calculation using \eqref{umu-def} shows that $-\partial_\eta U_\mu$ is $\pi$ times the Poisson integral of $\mu$:
$$ -\partial_\eta U_\mu(x+i\eta) = \pi \int_\R \Poisson_\eta(x-u)\, d\mu(u).$$
Hence by \eqref{umud-deriv} we have
\begin{equation}\label{poisson-mu}
 \int_\R \Poisson_\eta(x-u)\, d\mu(u) = \rho(x) + O_{I'}\left( \frac{\log n}{n \eta} + \eta \right)
\end{equation}
Using \eqref{goa}, we conclude \eqref{mueta} in the case $2 \frac{\log n}{n} \leq \eta \leq 1/2$, which also implies the $0 < \eta < 2 \frac{\log n}{n}$ case by monotonicity. Finally, the case $\eta > 1/2$ is immediate as $\mu$ is a probability measure.

Now we prove (iv).  We may assume that $K \leq \sqrt{n/\log n}$, as the claim trivially follows from \eqref{goa} and the probability measure nature of $\mu$ otherwise. By \eqref{mueta}, dyadic decomposition, and decay of the Poisson kernel we have
$$ -\partial_\eta U_\mu(x + i\eta) = \pi \int_{x-K\eta}^{x+K\eta} \Poisson_\eta(x-v)\,d\mu(v) + O_{I'}\left(\frac{1}{K} \right)$$
for any $x \in I'$, so that
$$ \int_{x-K\eta}^{x+K\eta} \Poisson_\eta(x-v)\,d\mu(v) = \rho(x) + O_{I'}\left( \frac{\log n}{n \eta} + \eta + \frac{1}{K} \right).$$
Integrating this for $x \in J$ and using \eqref{mueta} to control errors, we conclude that
$$ \left( 1 - O\left(\frac{1}{K}\right)\right) \mu(J) + O(K\eta)  = \int_J \rho(x)\ dx + O_{I'}\left( \frac{\log n}{n \eta} |J| + \eta |J| + \frac{|J|}{K} \right)$$
for any $K \geq 1$ and $\frac{2 \log n}{n} \leq \eta \leq \frac{1}{2}$; setting $\eta \coloneqq \frac{2 K \log n}{n}$, we conclude\footnote{With more care one could improve the $K^2$ factor in \eqref{muj} further, but we will not need such an improvement for our purposes.} \eqref{muj}.
\end{proof}

\section{The amplitude function}\label{grid-sec}

Let the notation and hypotheses be as in \Cref{prelim}.  The density of state function $\rho$ introduced in that theorem controls the ``frequency'' of $P$: informally, $P$ oscillates at frequency $n \pi \rho(x)$ near any given $x \in I$.  In this section we complement this control with control of the ``amplitude'' of $P$.

More precisely, for any $x \in I$, we define the amplitude $A(x)$ by the formula
\begin{equation}\label{eqx}
A(x) \coloneqq \sup_{x' \in I} |P(x')| e^{-n^{0.1}|x-x'|}.
\end{equation}

We record the following properties of $A$.

\begin{proposition}[Amplitude function]\label{good-grid}  Let the notation and hypotheses be as in \Cref{prelim}.
\begin{itemize}
\item[(i)] (Domination by amplitude) We have $|P(x)| \leq A(x)$ for all $x \in I$.
\item[(ii)] (Log-lipschitz property)  For all $x,y \in I$ we have
$$ e^{-n^{0.1}|x-y|} A(y) \leq A(x) \leq e^{n^{0.1}|x-y|}A(y).$$
\item[(iii)]  (Amplitude size)  For any $x \in I$, we have $A(x) = n^{O(1)} e^{-n\alpha}$.
\item[(iv)]  (Location of extremizer) If $x \in I$, then there exists $x' \in I$ with
\begin{equation}\label{xxn}
    |x-x'| \ll \frac{\log n}{n^{0.1}}
\end{equation}
 such that
\begin{equation}\label{axi}
A(x) = |P(x')| e^{-n^{0.1}|x-x'|} = A(x') e^{-n^{0.1}|x-x'|}.
\end{equation}
\item[(v)]  (Bounds in the upper half-plane) If $x \in I' \Subset I$ and $0 \leq \eta \leq n^{-0.4}$, then
\begin{equation}\label{bound-1}
 |P(x+i\eta)| \leq (1 + O_{I'}(n^{-0.1})) A(x) \exp( \pi n \eta \rho(x) (1 + O_{I'}(n^{-0.1}))).
\end{equation}
and under the additional hypothesis $\eta \geq \log n / n$ we have the variant bound
\begin{equation}\label{bound-2}
|P(x+i\eta)| = n^{O_{I'}(1)} A(x) \exp( \pi n \eta \rho(x) ).
\end{equation}
\item[(vi)]  (Derivative bound) If $x \in I' \Subset I$, then
\begin{equation}\label{pderiv}
 |P'(x)| \leq  (1 + O_{I'}(n^{-0.1})) \pi n A(x) \rho(x).
\end{equation}
\end{itemize}

\end{proposition}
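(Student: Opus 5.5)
Parts (i) and (ii) follow directly from the definition \eqref{eqx}. For (i), take $x'=x$ in the supremum. For (ii), the triangle inequality gives $e^{-n^{0.1}|x-x'|} \geq e^{-n^{0.1}|x-y|} e^{-n^{0.1}|y-x'|}$; taking the supremum over $x'$ yields $A(x) \geq e^{-n^{0.1}|x-y|} A(y)$, and the reverse inequality follows by symmetry.

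For (iii), the upper bound $A(x) \leq \sup_{I}|P| \ll n^{O(1)} e^{-n\alpha}$ comes from \eqref{deltax}. For the lower bound, the nodes have spacing at most $n^{-1}$ on average; more precisely, \eqref{mueta} says an interval of length $n^{-2}$ near $x$ contains $O(\log n)$ nodes. So we can pick $x'$ with $|x-x'|\leq n^{-1}$ and $\delta(x') \gg n^{-3}$, and then \eqref{deltax} gives $A(x) \geq |P(x')| e^{-n^{-0.9}} \gg n^{-3} e^{-n\alpha}$. Near $\partial I$ we choose $x'$ on the inward side.

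For (iv), the function $x' \mapsto |P(x')|e^{-n^{0.1}|x-x'|}$ is continuous on the compact set $\bar I$, so the supremum is attained (if $I$ is not closed, we pass to its closure). By (iii), any point with $n^{0.1}|x-x'| > C\log n$ gives a value smaller than $A(x)$, which yields \eqref{xxn}. The second equality in \eqref{axi} follows from (i) together with $A(x')\geq |P(x')|$ and (ii): we have $A(x')e^{-n^{0.1}|x-x'|} \leq A(x) = |P(x')|e^{-n^{0.1}|x-x'|} \leq A(x')e^{-n^{0.1}|x-x'|}$.

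The substantive parts are (v) and (vi), where we apply \Cref{main-2} to a normalized version of $P$ on a mesoscopic rectangle. Fix $x_0\in I'$ and set $y_0 = n^{-0.4}$, $L = n^{-0.3}$ (so that $L/y_0 = n^{0.1}$ and $e^{-\pi L/4y_0}$ is tiny), and $J = [x_0 - 2L, x_0+2L]$. We then verify the hypotheses:
\begin{itemize}
\item[(a)] On the lower edge, (ii) gives $|P(x)| \leq A(x) \leq e^{2n^{0.1}L} A(x_0) = (1+O(n^{-0.2}))A(x_0)$, so we take $A := (1+O(n^{-0.2}))A(x_0)$.
\item[(b)] On the upper edge, we combine \eqref{umud} with \eqref{umu-def} to get $|P(x+iy_0)| = e^{-nU_\mu} = e^{-n\alpha} e^{\pi n y_0 \rho(x)} e^{O(\log n + n y_0^3)}$. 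Here $ny_0^3 = n^{-0.2}$, but the $O(\log n)$ loss is not acceptable. To fix this, we take $\lambda := \pi n \rho(x_0)(1+Cn^{-0.1})$. The Lipschitz error $\rho(x)-\rho(x_0) = O(L)$ contributes $e^{O(n y_0 L)} = e^{O(n^{0.3})}$, which is also too big, so instead we set $\lambda = \pi n\rho(x_0)(1+ C n^{-0.1})$ with $C$ large. The extra factor $e^{C n^{-0.1}\cdot n y_0}= e^{Cn^{0.5}}$ then absorbs both the polynomial factor $n^{O(1)}$ (note $A(x_0)\geq n^{-O(1)}e^{-n\alpha}$) and the $O(nLy_0)=O(n^{0.3})$ error.
\item[(c)] On the vertical edges, a crude bound suffices: for $0\leq y\leq y_0$ we have $|P(x+iy)| \leq |P(x)|\prod_k(1+y^2/(x-x_k)^2)^{1/2}$. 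We handle this by choosing the edges of $J$ at points with $\delta \gg n^{-3}$ (adjusting $L$ slightly), which gives at most $e^{O(n\log n)}$, far below the double exponential $\exp(e^{\pi n^{0.1}/4})$.
\end{itemize}

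\Cref{main-2}(iii) at $x=x_0$ then gives $|P(x_0+i\eta)| \leq A(1+O(e^{-cn^{0.1}}))\cosh((1+O(1/(\lambda y_0)))\lambda\eta)$, and $\cosh t\leq e^t$ gives \eqref{bound-1}. Since $1/(\lambda y_0) = O(n^{-0.6})$, all relative errors in the exponent are $O(n^{-0.1})$. Similarly, \Cref{main-2}(i) gives \eqref{pderiv}.

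The two-sided estimate \eqref{bound-2} follows separately, by comparing \eqref{umud}, i.e.\ $|P(x+i\eta)| = n^{O(1)}e^{-n\alpha}e^{\pi n\eta\rho(x)}$ (using $n\eta^3\leq n^{-0.2}$), with (iii).

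The main obstacle is the bookkeeping in the upper edge hypothesis (b): choosing $y_0$, $L$ and the inflation of $\lambda$ so that the $n^{O(1)}$ and Lipschitz losses are absorbed into a $(1+O(n^{-0.1}))$ factor in the frequency, while keeping the amplitude error at $1+O(n^{-0.1})$.
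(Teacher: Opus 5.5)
Your proposal is correct and follows essentially the same route as the paper: elementary arguments from the definition \eqref{eqx} for (i)--(iv), then \Cref{main-2} on a rectangle of height $n^{-0.4}$ with $\lambda$ inflated by a factor $1+O(n^{-0.1})$ to absorb the $n^{O(1)}$ and Lipschitz losses on the upper edge, and \eqref{bound-2} read off directly from \eqref{umud} together with (iii). The differences are cosmetic: you take $L=n^{-0.3}$ where the paper takes $n^{-0.2}$. Your detour through edge points with $\delta\gg n^{-3}$ is unnecessary, since the trivial bound $|P(x+iy)|\le e^{O(n)}$ already suffices. In (iii) you also do not need \eqref{mueta}, which is only available on $I'\Subset I$: the trivial count of $n$ nodes is enough to find $x'$.
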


\begin{proof}
The claims (i), (ii) are immediate from \eqref{eqx} and the triangle inequality.  For (iii), we see from \eqref{eqx} and the upper bound in \eqref{deltax} that $A(x) \ll n^{O(1)} e^{-n\alpha}$.  For the lower bound, observe from the pigeonhole principle that we can find $x' \in I$ with $x' = x + O(n^{-0.1})$ and $\delta(x') \gg n^{-1.1}$, and then from \eqref{eqx} and the lower bound in \eqref{deltax} we have $A(x) \gg n^{-O(1)} e^{-n\alpha}$.

For (iv), we let $x'$ attain the supremum in \eqref{eqx} (which is attainable since we are taking the supremum of a continuous function over a compact set).  Then we have \eqref{axi} by \eqref{eqx} and the triangle inequality.  From part (iii), \eqref{axi}, and the upper bound in \eqref{deltax} we have
$$ n^{-O(1)} e^{-n\alpha} \ll A(x) = A(x') e^{-n^{0.1} |x-x'|} \ll n^{O(1)} e^{-n\alpha}$$
giving \eqref{xxn}.

Now we prove (v), (vi).  We allow implied constants to depend on $I'$. We will apply Theorem \ref{main-2} on the rectangle $R^+([x-n^{-0.2}, x+n^{-0.2}], n^{-0.4})$.  For $x'$ in the lower edge of this rectangle, we have from (i), (ii) that
$$ |P(x')| \leq A(x) (1 + O(n^{-0.1})).$$
For $\frac{\log n}{n} \leq \eta \leq n^{-0.4}$ and $x' \in [x-n^{-0.2}, x+n^{-0.2}]$,
we have from (iii) and \Cref{prelim}(ii) that
$$ U_\mu(x' + i \eta) = \frac{1}{n} \log \frac{1}{A(x)} - \pi \eta \rho(x') + O\left(\frac{\log n}{n}\right),$$
which when combined with \eqref{umu-def} gives \eqref{bound-2}.  On the upper edge of the rectangle, we conclude from \eqref{umu-def}, \eqref{goa} that
$$ |P(x'+in^{-0.4})| \leq A(x) \exp( n^{-0.4} \pi n \rho(x) (1 + O(n^{-0.1}))).$$
For $x'+iy$ on the left and right edges of the rectangle, we use the crude bound
$$P(x'+iy) \ll \prod_{i=1}^n O(1) \ll \exp(O(n)) \ll A(x) \exp( O(n ) )$$
using \eqref{poly-def}, (iii) and \eqref{alpha}.

Applying \Cref{main-2}(i), (iii) with $A = A(x) (1 + O(n^{-0.1}))$, $L = n^{-0.2}$, and $\lambda = \pi n \rho(x) (1 + O(n^{-0.1}))$, and using the bound $\cosh(y) \leq \exp(y)$ for $y \geq 0$, we conclude \eqref{pderiv} and \eqref{bound-1}.
\end{proof}

\begin{remark}\label{rem-amp} In the case of the Chebyshev polynomial (\Cref{cheby}), the amplitude $A(x)$ is equal to the constant $2^{1-n}$.  More generally, we expect that the amplitude $A(x)$ could vary somewhat at macroscopic scales, but will remain close to constant at microscopic or mesoscopic scales, as quantified by \Cref{good-grid}(ii).
\end{remark}

\section{Localizing around a good point}

Using \eqref{muj} and \eqref{pderiv}, we are led to the heuristic lower bound
$$ \lambda(x) \geq \int_{y \in I: |x-y| \geq 1/n} \frac{|P(x)|}{n \pi \rho(y) A(y) |x-y|}\ n \rho(y) dy$$
for $x \in I$, so in particular if $|P(x)|=A(x)$ we expect to have
\begin{equation}\label{lambda-low}
    \lambda(x) \geq \frac{1}{\pi} \int_{y \in I: |x-y| \geq 1/n} \frac{A(x)}{A(y) |x-y|}\ dy.
\end{equation}
If the amplitude $A(x)$ was constant then this would already give the desired lower bound $\frac{2}{\pi} \log n - O(1)$.

To make this heuristic precise, we first locate a candidate point $x_*$ for which the macroscopic contribution to the predicted lower bound \eqref{lambda-low} for $\lambda(x_*)$ is expected to be large.

\begin{proposition}[Good point]\label{gp}  Let the notation and hypotheses be in \Cref{prelim}, and let $I' \Subset I$.  Suppose $n$ is sufficiently large depending on $I'$. Then there exists $x_* \in I'$ such that
\begin{equation}\label{pxs}
|P(x_*)| = A(x_*)
\end{equation}
and
\begin{equation}\label{good-est}
    \frac{1}{\pi} \int_{y \in I': |y-x_*| \geq n^{-0.1}} \frac{A(x_*)}{A(y) |x_*-y|}\ dy \geq \frac{2}{\pi} \log n^{0.1} - O_{I'}(1).
\end{equation}
\end{proposition}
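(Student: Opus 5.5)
The idea is to choose $x_*$ by maximizing the log-amplitude $a(x) \coloneqq \log A(x)$, plus a penalty that keeps the maximizer away from $\partial I'$. At a maximizer the ratio $A(x_*)/A(y)$ in \eqref{good-est} is at least roughly $1$, so the integral is at least $\int dy/|x_*-y|$, which is $2\log n^{0.1} - O(1)$ as long as $x_*$ is not too close to $\partial I'$. Let $d(x) \coloneqq \mathrm{dist}(x,\partial I')$ and set
$$ F(x) \coloneqq a(x) + \log d(x), \qquad x \in I'.$$
By \Cref{good-grid}(ii) the function $a$ is continuous, indeed $n^{0.1}$-Lipschitz, while $\log d \to -\infty$ at $\partial I'$. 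Hence $F$ attains its maximum at some interior point $x_0 \in I'$. For every $y \in I'$ this gives
$$ \frac{A(x_0)}{A(y)} \geq \frac{d(y)}{d(x_0)}.$$

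Next I would check that the weighted integral around $x_0$ is large. Split the range $|y-x_0| \geq n^{-0.1}$ into the side facing the nearer endpoint of $I'$ and the side facing the middle of $I'$, and write $d = d(x_0)$ and $t = |y-x_0|$.
\begin{itemize}
\item On the near side $d(y) = d-t$, so this side contributes $\int_{n^{-0.1}}^{d} \frac{1-t/d}{t}\,dt = \log(dn^{0.1}) - O(1)$ (or nothing if $d<n^{-0.1}$).
\item On the far side $d(y) = d+t$ up to the midpoint, so this side contributes at least $\int_{n^{-0.1}}^{|I'|/2-d}\frac{d+t}{dt}\,dt \geq \log \frac{n^{0.1}(|I'|/2-d)}{1}+\frac{|I'|/2-d}{d}-O(1)$.
\end{itemize}
Adding, the total is at least $2\log n^{0.1} + \log d + c/d - O_{I'}(1)$ with $c \asymp |I'|$. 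Since $\log d + c/d$ is bounded below on $(0,|I'|/2]$, this is $2\log n^{0.1} - O_{I'}(1)$. If $d < n^{-0.1}$, the far side alone gives $\log n^{0.1} + cn^{0.1}$, which is more than enough. This yields \eqref{good-est} with $x_0$ in place of $x_*$.

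The main obstacle is \eqref{pxs}, since $x_0$ need not satisfy $|P(x_0)| = A(x_0)$. My plan is to invoke \Cref{good-grid}(iv), which produces $x'$ with $|P(x')| = A(x')$ and $a(x') = a(x_0) + n^{0.1}|x_0-x'|$, where $|x_0 - x'| \ll \log n/n^{0.1}$. I would then try to show that $x'$ can serve as $x_*$.

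If $x' \in I'$, then $F(x') \leq F(x_0)$ forces $\log\frac{d(x')}{d(x_0)} \leq -n^{0.1}|x_0-x'|$. Thus $x'$ lies strictly closer to $\partial I'$ than $x_0$, while having strictly larger amplitude. This monotonicity should let me transfer the bound: since $A(x') \geq A(x_0)$, one has $A(x')/A(y) \geq d(y)/d(x_0)$. The remaining point is to re-center the kernel $1/|x-y|$ from $x_0$ to $x'$ without losing more than $O(1)$. The danger is a $\log\log n$ loss from the window $|y-x_0| \lesssim \log n/n^{0.1}$. To avoid it I would use the exact identity $A(x') = A(x_0)e^{n^{0.1}|x_0-x'|}$, which gives an exponential gain precisely on that window, and then redo the two-sided computation around $x'$.

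To prevent $x'$ from leaving $I'$, I would run the whole maximization on a slightly smaller interval $I'' \Subset I'$ of margin $\gg \log n/n^{0.1}$ and integrate over $I'$. This costs only $O_{I'}(1)$.

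If this re-centering proves too lossy, my fallback is to replace the penalty $\log d$ by a steeper one near $\partial I''$ (slope exceeding $n^{0.1}$ within distance $n^{-0.1}\log n$ of the boundary). The aim is to force the maximizer of the penalized functional to coincide with its own extremizer from \Cref{good-grid}(iv), so that $x_* = x_0$ directly.
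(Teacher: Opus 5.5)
Your argument works, and its main step takes a different route from the paper's. The paper maximizes nothing. It integrates the truncated kernel over $I''\times I''$, applies the AM--GM bound $\frac12\frac{A(x)}{A(y)}+\frac12\frac{A(y)}{A(x)}\ge 1$ with symmetry to get $\int_{I''}\int_{I''}\frac{A(x)}{A(y)}\min(\frac{1}{|x-y|},n^{0.1})\,dx\,dy\ge 2|I''|\log n^{0.1}-O(1)$, and then pigeonholes a good $x$. The boundary of $I''$ is absorbed into the $O(1)$ of the double integral, so no case analysis is needed.

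Your penalized maximum instead gives the pointwise domination $A(x_0)/A(y)\ge d(y)/d(x_0)$, which is more explicit but forces a two-sided boundary computation. That computation is right, and the $c/d$ bonus is exactly what handles $x_0$ near the boundary. There is one slip: your far-side integral only runs to the midpoint, so it degenerates when $d$ is close to half the length of the interval. In that case use instead $\min(d_1+t,d_2-t)/d_1\ge (d_2-t)/d_2$, where $d_1\le d_2$ are the distances from $x_0$ to the two endpoints. This gives $\log n^{0.1}-O(1)$ on the far side uniformly.

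Both proofs then pass to $x'$ via \Cref{good-grid}(iv). The re-centering you left tentative closes without loss and needs no fallback. The paper's device is that $x\mapsto\log\min(1/|x-y|,n^{0.1})$ is $n^{0.1}$-Lipschitz, so $\min(\frac{1}{|x_0-y|},n^{0.1})\le e^{n^{0.1}|x_0-x'|}\min(\frac{1}{|x'-y|},n^{0.1})$. Multiply by $A(x_0)/A(y)$ and use $A(x_0)e^{n^{0.1}|x_0-x'|}=A(x')$; this transfers your bound at $x_0$ to $x'$ with the truncated kernel. Discarding the window $|y-x'|<n^{-0.1}$ then costs only $\int_{|y-x'|<n^{-0.1}} n^{0.1}e^{n^{0.1}|x'-y|}\,dy=O(1)$, by \Cref{good-grid}(ii). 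So the gain $e^{n^{0.1}|x_0-x'|}$ is not confined to the window. It multiplies the whole integrand, and the truncation at scale $n^{-0.1}$ is what makes the comparison exact.

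Alternatively, you can finish inside your own framework. Combining your inequality $\log\frac{d(x')}{d(x_0)}\le -n^{0.1}|x_0-x'|$ with $d(x')\ge d(x_0)-|x_0-x'|$ shows that $x'\neq x_0$ is only possible when $d(x_0)\ll\log n/n^{0.1}$. This holds trivially if $x'$ leaves the interval of maximization. In that case the middle third of the interval alone contributes $\gg n^{0.1}/\log n$ at $x'$, which is far more than needed.
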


\begin{proof} Select an interval $I'' \subset I'$, and allow all implied constants to depend on $I', I''$; we assume $n$ sufficiently large depending on $I,I',I''$.  Direct calculation shows that
$$ \int_{[y-1,y+1]}\min\left( \frac{1}{|x-y|}, n^{0.1}\right) \ dx = 2 \log n^{0.1} - O(1)$$
for all $y \in I''$, and
$$ \int_{I''} \int_{[y-1,y+1] \backslash I''} \frac{1}{|x-y|}\ dx dy \ll 1$$
and hence by the triangle inequality
$$ \int_{I''} \int_{I''} \min\left( \frac{1}{|x-y|}, n^{0.1}\right) \ dx dy = 2 |I''| \log n^{0.1} - O(1).$$
From the arithmetic mean--geometric mean inequality
$$ \frac{1}{2} \frac{A(x)}{A(y)} + \frac{1}{2} \frac{A(y)}{A(x)} \geq 1$$
(cf. \cite[(3)]{esz}) and symmetry we thus have
$$ \int_{I''} \int_{I''} \frac{A(x)}{A(y)} \max\left( \frac{1}{|x-y|}, n^{0.1}\right)\ dx dy \geq 2 |I''| \log n^{0.1} - O(1).$$
By the pigeonhole principle, we can find $x \in I''$ such that
$$ \int_{I''} \frac{A(x)}{A(y)} \min\left( \frac{1}{|x-y|}, n^{0.1}\right)\ dy \geq 2 \log n^{0.1} - O(1).$$
By \Cref{good-grid}(iv), we can find $x_* = x +O(n^{-0.1} \log n)$ (so in particular $x_* \in I'$) such that $A(x_*) = |P(x_*)|$ and $A(x) = e^{-n^{0.1} |x_*-x|} A(x_*)$, thus
$$ \int_{I''} \frac{A(x_*)}{A(y)} e^{-n^{0.1} |x_*-x|} \min\left( \frac{1}{|x-y|}, n^{0.1}\right)\ dy \geq 2 \log n^{0.1} - O(1).$$
The function
$$ x \mapsto \min\left( \log \frac{1}{|x-y|}, \log n^{0.1} \right)$$
has a Lipschitz constant of $n^{0.1}$, hence
$$ e^{-n^{0.1} |x_*-x|} \min\left( \frac{1}{|x-y|}, n^{0.1}\right)
\leq \min\left( \frac{1}{|x_*-y|}, n^{0.1}\right).$$
We conclude that
$$ \int_{I''} \frac{A(x_*)}{A(y)} \min\left( \frac{1}{|x_*-y|}, n^{0.1}\right)\ dy \geq 2 \log n^{0.1} + O(1).$$
From \Cref{good-grid}(ii) we have
$$ \int_{|x_*-y| \leq n^{-0.1}} \frac{A(x_*)}{A(y)} n^{0.1}\ dy = O(1)$$
and thus
$$ \int_{I'': |x_*-y| > n^{-0.1}} \frac{A(x_*)}{A(y)} \frac{1}{|x_*-y|}\ dy \geq 2 \log n^{0.1} - O(1).$$
The claim follows.
\end{proof}

Pick an interval $I' \Subset I$, and let $x_* \in I'$ be the point in \Cref{gp}.  We allow implied constants to depend on $I'$, and assume $n$ sufficiently large depending on $I'$. To prove \Cref{main}(i), it will suffice by \eqref{lamax} to show that
\begin{equation}\label{total}
 \sum_{k=1}^n \frac{|P(x_*)|}{|P'(x_k)| |x_*-x_k|} \geq \frac{2}{\pi} \log n - O(1).
\end{equation}

Using \eqref{good-est}, we may readily control the macroscopic contribution:

\begin{lemma}[Macroscopic contribution]\label{macro}  We have
$$
 \sum_{k: |x_*-x_k| \geq n^{-0.1}} \frac{|P(x_*)|}{|P'(x_k)| |x_*-x_k|} \geq \frac{2}{\pi} \log n^{0.1} - O(1).
$$
\end{lemma}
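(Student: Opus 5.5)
The plan is to turn the sum into the integral \eqref{good-est} using the derivative bound \Cref{good-grid}(vi) and the density-of-states asymptotic \Cref{prelim}(iv). Since all terms are nonnegative, we may discard the nodes outside $I'$. By \eqref{pxs} the numerator is $|P(x_*)| = A(x_*)$. For each node $x_k \in I'$, \eqref{pderiv} gives $|P'(x_k)| \leq (1+O(n^{-0.1})) \pi n A(x_k)\rho(x_k)$. Hence the sum is at least
$$ (1-O(n^{-0.1})) \frac{1}{\pi} \sum_{k: x_k \in I', |x_*-x_k| \geq n^{-0.1}} \frac{1}{n} \frac{A(x_*)}{A(x_k)\rho(x_k)|x_*-x_k|} = (1-O(n^{-0.1}))\frac{1}{\pi}\int_{E} F\, d\mu ,$$
where $E \coloneqq \{y \in I': |y-x_*| \geq n^{-0.1}\}$ and $F(y) \coloneqq \frac{A(x_*)}{A(y)\rho(y)|x_*-y|}$. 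The target is $\frac{1}{\pi}\int_E F(y)\rho(y)\,dy$, which is exactly the left side of \eqref{good-est}. The multiplicative factor $1-O(n^{-0.1})$ is harmless, because that integral is at most $n^{O(1)}$ (by \Cref{good-grid}(iii) the ratio $A(x_*)/A(y)$ is $n^{O(1)}$).

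To compare $\int_E F\,d\mu$ with $\int_E F\rho\,dy$, I would partition $E$ into intervals $J$ of length $h = n^{-0.2}$. On each such $J$, \Cref{good-grid}(ii) gives $A(y) = A(y_J)(1+O(n^{-0.1}))$, \eqref{goa} gives $\rho(y) = \rho(y_J)(1+O(n^{-0.2}))$, and since $|x_*-y|\ge n^{-0.1}$ we get $|x_*-y| = |x_*-y_J|(1+O(n^{-0.1}))$. So $F$ is constant on $J$ up to a factor $1+O(n^{-0.1})$. Applying \eqref{muj} with a parameter $K$ then gives $\mu(J) = \int_J\rho + O(h/K + K^2\log n/n)$. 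Choosing $K = n^{0.2}$ makes the error $O(n^{-0.4}\log n) = O(h\, n^{-0.2}\log n)$, which is relatively $O(n^{-0.2}\log n)$ compared with $\int_J \rho \asymp h$. The two end pieces adjacent to $\partial E$ are handled the same way or simply dropped.

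Summing over $J$, the relative errors multiply the main term by $1-O(n^{-0.1})$, so
$$\frac1\pi\int_E F\,d\mu \ge (1-O(n^{-0.1}))\frac1\pi\int_E \frac{A(x_*)}{A(y)|x_*-y|}\,dy.$$

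The main obstacle is ensuring that relative errors, rather than additive ones, are what multiply the potentially huge ratio $A(x_*)/A(y)$. The block decomposition handles this, because each block's error is proportional to that block's own main term. The remaining concern is that $n^{O(1)}$ times $n^{-0.1}$ need not be $O(1)$. To deal with this I would use a cleaner bound: the lower-bound direction only needs $\int_E F\,d\mu \ge (1-\epsilon)\int_E F\rho$, and the losses are multiplicative. So the final estimate is $(1-O(n^{-0.1}))$ times the bound from \eqref{good-est}.

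If the integral in \eqref{good-est} is larger than $\log n$, the conclusion is trivially true after the multiplicative loss. Otherwise the integral is $O(\log n)$, and the loss is only $O(n^{-0.1}\log n) = O(1)$. Either way we obtain $\frac{2}{\pi}\log n^{0.1} - O(1)$.
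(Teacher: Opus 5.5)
Your proposal is correct and is essentially the paper's argument: you bound $|P'(x_k)|$ via \eqref{pderiv}, cut the region into blocks of length $\asymp n^{-0.2}$ on which $A$, $\rho$ and $|x_*-y|$ are constant up to a factor $1+O(n^{-0.1})$, count nodes per block with \eqref{muj}, and then invoke \eqref{good-est}, where the multiplicative loss is harmless because the lower bound is monotone, as your final case analysis shows. The one point to tighten is that a leftover piece next to $\partial I'$ should be merged into a neighbouring block rather than dropped, since there $A(x_*)/A(y)$ can be as large as $n^{O(1)}$; by contrast, a short piece next to $|y-x_*|=n^{-0.1}$ contributes only $O(n^{-0.1})$ and can be discarded.
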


\begin{proof}
By \eqref{pxs} and \Cref{good-grid}(iv) it suffices to show that
$$ \sum_{x_k \in I: |x_* - x_k| \geq n^{-0.1}} \frac{A(x_*)}{n \pi \rho(x_k) A(x_k) |x_*-x_k|} \geq
    \frac{1-O(n^{-0.1})}{\pi} \int_{y \in I': |y-x_*| \geq n^{-0.1}} \frac{A(x_*)}{A(y) |x_*-y|}\ dy - O(1).$$
We will just show the estimate
$$ \sum_{x_k \in I: x_k - x_* \geq n^{-0.1}} \frac{A(x_*)}{n \pi \rho(x_k) A(x_k) |x_*-x_k|} \geq  \frac{1-O(n^{-0.1})}{\pi} \int_{y \in I': y-x_* \geq n^{-0.1}} \frac{A(x_*)}{A(y) |x_*-y|}\ dy - O(1)$$
controlling the region to the right of $x_*$; the analogous estimate controlling the region to the left of $x_*$ is proven similarly.

If the region $\{ y \in I': y - x_* \geq n^{-0.1}\}$ has diameter less than $n^{-0.1}$, then from \Cref{good-grid}(ii) the integral is bounded and the claim is trivial.  Thus we may assume that this region has diameter at least $n^{-0.1}$, and so can be partitioned into intervals $J$ of length $\asymp n^{-0.2}$.  On each such interval, it will suffice to show that
$$ \sum_{x_k \in J: x_k - x_* \geq n^{-0.1}} \frac{A(x_*)}{n \pi \rho(x_k) A(x_k) |x_*-x_k|} \geq
    \frac{1-O(n^{-0.1})}{\pi} \int_J \frac{A(x_*)}{A(y) |x_*-y|}\ dy.$$
Let $x_J$ be the midpoint of $J$.  From \Cref{good-grid}(ii), \eqref{goa}, and elementary estimates we have
\begin{align*}
A(y) &= (1+O(n^{-0.1})) A(x_J) \\
|x_*-y| &= (1+O(n^{-0.1})) |x_*-x_J| \\
\rho(y) &= (1+O(n^{-0.1})) \rho(x_J)
\end{align*}
for all $y \in J$.  From this and \eqref{muj} (with $K = n^{0.1}$), we have $(1+O(n^{-0.1})) n \pi \rho(x_J) |J|$ nodes $x_k$ in $J$.  The claim follows.
\end{proof}

To show \eqref{total}, it now suffices to show that
\begin{equation}\label{total-micro}
 \sum_{|x_k - x_*| < n^{-0.1}} \frac{|P(x_*)|}{|P'(x_k)| |x_*-x_k|} \geq \frac{2}{\pi} \log n^{0.9} - O(1).
\end{equation}
It will be convenient to introduce the rescaled polynomial
$$ Q(z) \coloneqq \frac{P(x_* + \frac{z}{n \rho(x_*)})}{P(x_*)},$$
in which the amplitude and mean zero spacing have both been normalized to equal $1$.  The polynomial $Q$ has simple real roots
$$ v_k \coloneqq n \rho(x_*) (x_k - x_*).$$
The claim \eqref{total-micro} can then be rewritten (using \eqref{goa}) as
\begin{equation}\label{total-micro-rescaled}
     \sum_{|v_k| < T} \frac{1}{|Q'(v_k)| |v_k|} \geq \frac{2}{\pi} \log T - O(1)
\end{equation}
where
$$ T \coloneqq \rho(x_*) n^{0.9}.$$

Heuristically, $Q(z)$ should behave like the sinusoid $\cos(\pi z)$.  The following lemma helps make this intuition precise:

\begin{lemma}[$Q$ behaves like cosine]\label{Q-bound}\
\begin{itemize}
\item[(i)] (Normalization) We have $Q(0)=1$.
\item[(ii)] (Mesoscopic control of zeroes) For $n^{0.01} \leq t \leq T$, we have
$$ \# \{ k: |v_k| \leq t \} = (2 + O(n^{-0.001})) t.$$
\item[(iii)]  (Derivative bound)  For all $x \in [-T,T]$, one has
$$ |Q'(x)| \leq (1 + O(n^{-0.1})) \pi \exp( O( |x|/T )).$$
\item[(iv)]  (Microscopic control of zeroes) For any $0 < t \leq T$, one has
\begin{equation}\label{fz}
    \# \{ k: |v_k| \leq t \} \ll 1 + t.
\end{equation}
\item[(v)] (Local exponential type bound) For $x, y = O(n^{0.1})$, one has
\begin{equation}\label{expt}
Q(x+iy) \ll \exp( \pi |y| )
\end{equation}
for $x, y = O(n^{0.1})$, with the variant bound
\begin{equation}\label{exp-var}
|Q(x+iy)| = n^{O(1)} \exp( \pi |y| )
\end{equation}
under the additional hypothesis $|y| \geq \rho(x_*) \log n$.
\end{itemize}
\end{lemma}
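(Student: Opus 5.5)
Each item is a rescaling of an earlier result, using the change of variables $x = x_* + \frac{z}{n\rho(x_*)}$, the normalization $|P(x_*)| = A(x_*)$ from \eqref{pxs}, and the fact that $T = \rho(x_*)n^{0.9}$ corresponds to $|x-x_*| < n^{-0.1}$. Throughout, implied constants depend on $I'$, and we apply \Cref{prelim}, \Cref{good-grid} on an intermediate interval $I' \Subset I'' \Subset I$, so that $x_*$ stays a fixed distance from $\partial I''$.

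Part (i) is immediate from the definition of $Q$.

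For (ii), apply \eqref{muj} to $J = [x_* - \frac{t}{n\rho(x_*)}, x_* + \frac{t}{n\rho(x_*)}]$, so that $|J| \asymp t/n \leq n^{-0.1}$. By \eqref{goa}, $\int_J \rho = \rho(x_*)|J|(1+O(n^{-0.1}))$, so $n\mu(J) = 2t(1+O(n^{-0.1})) + O(t/K + K^2\log n)$. Choosing $K = n^{0.003}$ makes the error $O(n^{-0.003}t + n^{0.006}\log n)$. This is $O(n^{-0.001}t)$ once $t \geq n^{0.01}$.

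For (iv), use \eqref{mueta} with $\eta = t/(n\rho(x_*))$, giving a count of $O(t + \log n)$. This is not quite \eqref{fz} for $t \lesssim \log n$, which is the main obstacle. To fix it, use \Cref{main-2}(iv) on $P$ at the point $x_*$: by \Cref{good-grid}(ii),(v), $P$ satisfies hypotheses (a)--(c) on a rectangle of width $n^{-0.2}$ and height $n^{-0.4}$, with $A \approx A(x_*)$ and $\lambda \approx \pi n\rho(x_*)$. Since $|P(x_*)| = A(x_*)$, the $\log\frac{A}{|f(x)|}$ term is $O(1)$. The count of zeroes in $D(x_*, r)$ is therefore $O(1 + nr)$ for $r < n^{-0.4}/4$. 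This gives \eqref{fz} for $t \ll n^{0.6}$, and \eqref{mueta} covers the remaining larger $t$.

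For (iii), combine \eqref{pderiv} at the point $x$ with \Cref{good-grid}(ii): $|P'(x)| \leq (1+O(n^{-0.1}))\pi n\rho(x) A(x)$, while $A(x) \leq e^{n^{0.1}|x-x_*|}A(x_*)$ and $\rho(x) = \rho(x_*)(1+O(|x-x_*|))$. Rescaling, with $|x-x_*| = |x|/(n\rho(x_*))$, the factor $e^{n^{0.1}|x-x_*|}$ becomes $\exp(O(|x|/T))$, and $1+O(|x-x_*|)$ is likewise $\exp(O(|x|/T))$.

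For (v), treat the upper half-plane first and obtain the lower one by conjugate symmetry. Apply \eqref{bound-1} at the point $x_* + x/(n\rho(x_*))$ with $\eta = y/(n\rho(x_*)) \leq n^{-0.4}$, which holds since $y = O(n^{0.1})$. The same rescaling as in (iii) gives $A(\cdot)/A(x_*) = O(1)$, because $|x| = O(n^{0.1})$ and $n^{0.1}\cdot n^{0.1}/n \to 0$. The exponent is $\pi y\,\rho(\cdot)/\rho(x_*)\,(1+O(n^{-0.1})) = \pi y + O(n^{-0.1}y) = \pi y + O(1)$, which gives \eqref{expt}. For \eqref{exp-var}, use \eqref{bound-2} in the same way: the hypothesis $|y| \geq \rho(x_*)\log n$ says exactly that $\eta \geq \log n/n$.
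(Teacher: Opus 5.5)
Your proposal is correct and follows the paper's proof. Each part is a rescaling of \Cref{prelim}(iv), \Cref{good-grid}(ii),(v),(vi) and \eqref{goa}. For (iv) you combine a density bound for large $t$ with \Cref{main-2}(iv) for small $t$, as the paper does. The differences are cosmetic: you apply \Cref{main-2}(iv) to $P$ on the unrescaled rectangle and cover the intermediate range with \eqref{mueta}, whereas the paper applies it to $Q$ on $R^+([-n^{0.1},n^{0.1}],n^{0.05})$ via (i) and (v), and you take $K=n^{0.003}$ rather than $n^{0.001}$ in (ii); both choices work.
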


\begin{proof}  The claim (i) is clear from construction, and (ii) follows from \Cref{prelim}(iv) (with $K = n^{0.001}$), \eqref{goa}, and rescaling.

To prove (iii), we see from \Cref{good-grid}(vi) and rescaling that
$$ |Q'(x)| \leq \pi \frac{\rho(x')}{\rho(x_*)} A(x')
 (1 + O(n^{-0.1}))$$
where $x' \coloneqq x_* + \frac{x}{n \rho(x_*)}$.  The claim then follows from \eqref{goa} and \Cref{good-grid}(ii).

The bounds in (v) follow from \Cref{good-grid}(v) (and \eqref{goa}).  To prove (iv), it suffices by (ii) to assume $0 < t \leq n^{0.01}$, and then the claim follows from \Cref{main-2}(iv) applied to the rectangle $R^+([-n^{0.1}, n^{0.1}], n^{0.05})$ (say) with $L = n^{0.1}$, $A = 1 + O(n^{-0.1})$, $\lambda = 1 + O(n^{-0.1})$, after using (i) and (v).
\end{proof}

The remaining task in proving \Cref{main}(i) is to deduce \eqref{total-micro-rescaled} from \Cref{Q-bound}.  This will be the objective of the next section.

\section{Controlling the microscale contribution}\label{micro-sec}

Our arguments here are inspired from those in \cite{erdos}.

Call a scale $2 \leq t \leq T$ \emph{good} if
\begin{equation}\label{good} \# \{ k: |v_k| \leq t \} \geq \left( 2 - \frac{1}{\log^2 t} \right) t.
\end{equation}
By \Cref{Q-bound}(ii) we see that any scale $t$ in the mesoscopic range for all $n^{0.01} \leq t \leq T$ is good.  We make the following observation:

\begin{lemma}[Contribution of good scales]\label{good-cont}  Suppose that $2 \leq t_0 \leq T$ is such that all scales $t$ in the range $t_0 < t \leq T$ are good.  Then
$$     \sum_{t_0 \leq |v_k| < T} \frac{1}{|Q'(v_k)| |v_k|} \geq \frac{2}{\pi} \log \frac{T}{t_0} - O(1).$$
\end{lemma}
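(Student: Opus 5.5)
The plan is to lower bound each summand using the derivative bound \Cref{Q-bound}(iii), and then convert the resulting sum over nodes into an integral via summation by parts against the counting function
$$N(t) \coloneqq \#\{k: |v_k| \leq t\}.$$

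\textbf{Step 1: pointwise bound on the summands.} For $|v_k| < T$, \Cref{Q-bound}(iii) gives
$$\frac{1}{|Q'(v_k)|} \geq \frac{1}{\pi}\left(1 - O(n^{-0.1})\right) \exp\left(-O(|v_k|/T)\right) \geq \frac{1}{\pi}\left(1 - O\left(n^{-0.1} + \frac{|v_k|}{T}\right)\right).$$
Hence the left-hand side of the lemma is at least
$$\frac{1}{\pi} \sum_{t_0 \leq |v_k| < T} \frac{1}{|v_k|} \;-\; O\Bigl(\sum_{|v_k|<T} \frac{n^{-0.1}}{|v_k|} + \frac{1}{T}\Bigr).$$
The error terms are harmless. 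The term $\sum 1/T$ is $O(N(T)/T) = O(1)$ by \eqref{fz}. For the $n^{-0.1}$ term, restricting to $|v_k| \geq t_0 \geq 2$ and using \eqref{fz} with dyadic summation gives $\sum_{t_0 \leq |v_k| < T} 1/|v_k| \ll \log T \ll \log n$, so this term is $O(n^{-0.1}\log n) = O(1)$.

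\textbf{Step 2: reduction to a counting estimate.} It remains to show
$$\sum_{t_0 \leq |v_k| < T} \frac{1}{|v_k|} \geq 2\log\frac{T}{t_0} - O(1).$$
Write the sum as a Stieltjes integral $\int_{[t_0,T)} \frac{dN(t)}{t}$ and integrate by parts:
$$\int_{[t_0,T)} \frac{dN(t)}{t} = \frac{N(T^-)}{T} - \frac{N(t_0^-)}{t_0} + \int_{t_0}^{T} \frac{N(t)}{t^2}\,dt.$$

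\textbf{Step 3: bounding the three terms.} The boundary terms are controlled as follows. We have $N(T^-)/T \geq 0$, and by \eqref{fz}, $N(t_0^-)/t_0 \ll (1+t_0)/t_0 = O(1)$. For the integral, every $t \in (t_0, T]$ is good, so by \eqref{good}
$$\frac{N(t)}{t^2} \geq \frac{2}{t} - \frac{1}{t\log^2 t}.$$
Integrating, the main term $\int_{t_0}^T \frac{2}{t}\,dt$ gives $2\log(T/t_0)$. The loss is at most $\int_2^\infty \frac{dt}{t\log^2 t} = \frac{1}{\log 2} = O(1)$. This convergence is exactly why the deficit $1/\log^2 t$ was built into the definition of a good scale.

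Combining the three steps gives the claim. The only delicate point is the bookkeeping of the summation by parts, in particular treating nodes exactly at $|v_k| = t_0$ or $|v_k| = T$, since $N$ can jump there. Using the left limits $N(t^-)$ as written above, together with the crude bound \eqref{fz} at $t_0$, settles this.
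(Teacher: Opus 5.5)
Your proof is correct and takes essentially the paper's approach: you use \Cref{Q-bound}(iii) to replace $1/|Q'(v_k)|$ by $\frac{1}{\pi}(1-o(1))$, then the goodness bound \eqref{good}, whose deficit $\frac{1}{t\log^2 t}$ is integrable. The only difference is how the counting bound becomes a harmonic sum: you integrate by parts against $N(t)$, whereas the paper orders the roots by magnitude and bounds the $j$-th one by $(1+O(1/\log^2 j))\frac{j}{2}$ for $j \geq 4t_0$.
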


\begin{proof} By increasing $t_0$ if necessary we may assume that $t_0$ is larger than any given absolute constant. For $j=1,\dots,k$, let $\tilde v_j$ be the root of $Q$ with the $j^{\mathrm{th}}$ smallest magnitude.  Then by \eqref{good} we see that for any $4t_0 \leq j \leq T$ one has
$$ |\tilde v_j| \leq \left( 1 - O\left(\frac{1}{\log^2 j}\right) \right) \frac{j}{2}$$
(in particular $\tilde v_j$ lies outside of $[-t_0,t_0]$), while from \Cref{Q-bound}(ii) we also have $|\tilde v_j| \leq T$.  Applying \Cref{Q-bound}(iii), we conclude that
$$     \sum_{t_0 \leq |v_k| < T} \frac{1}{|Q'(v_k)| |v_k|}
\geq \sum_{4t_0 \leq j \leq T} \frac{1}{\left( 1 + O\left(\frac{1}{\log^2 j}\right) \right) \frac{j}{2} \exp( O( j/T )) }.$$
Writing
$$\frac{1}{\left( 1 + O\left(\frac{1}{\log^2 j}\right) \right) \frac{j}{2} \exp( O( j/T )) } = \frac{2}{j} - O\left( \frac{1}{j \log^2 j} + \frac{1}{T} \right)$$
and summing the harmonic series, we obtain the claim.
\end{proof}

Let $t_0$ be the least scale for which the hypothesis of \Cref{good-cont} holds, then by the previous discussion we have $2 \leq t_0 \leq n^{0.01}$.  If $t_0$ is bounded then \eqref{total-micro-rescaled} follows from \Cref{good-cont}, so we may assume that $t_0$ is larger than any given absolute constant.  Since the left-hand side of \eqref{good} is piecewise continuous with jumps of at most $2$, we have that the number
$$ \ell \coloneqq \# \{ k: |v_k| \leq t_0 \} $$
of zeroes in the microscale region $[-t_0,t_0]$ is given by
\begin{equation}\label{lcount} \ell = \left( 2 - \frac{1}{\log^2 t_0} \right) t_0 + O(1).
\end{equation}

We may assume the bound
\begin{equation}\label{t1}
     \sum_{|v_k| < t_1} \frac{1}{|Q'(v_k)| |v_k|} \leq \frac{2}{\pi} \log t_1
\end{equation}
for any $t_0 \leq t_1 \leq T$, since otherwise we can obtain \eqref{total-micro-rescaled} from \Cref{good-cont} (with $t_0$ replaced by $t_1$).

The bound \eqref{lcount} is in conflict with the previous intuition that $Q$ is supposed to behave like $\cos(\pi x)$, as it asserts that $Q$ has noticeably fewer zeroes than this sinusoid in the microscale region $[-t_0,t_0]$.  Following \cite{erdos}, our strategy is to exploit this conflict by studying a ``blending'' $G$ of $Q$ and a sinusoid, which will vanish or be small at all of the $v_j$, while still being large at the origin and of lower exponential type than $\cos(\pi x)$, which will turn out to contradict \eqref{t1} after a residue theorem calculation (analogous to the one in \Cref{res}).

We turn to the details. Let the $\ell$ zeroes of $Q$ in $[-t_0,t_0]$ be denoted by $w_1,\dots,w_\ell$.  The sinusoid
\begin{equation}\label{ka}
 \sin\left( \pi \left(1 - \frac{1}{\log^3 t_0}\right) z\right)
\end{equation}
has zeroes at the integer multiples of $(1 - \frac{1}{\log^3 t_0})^{-1}$.
From \eqref{lcount}, we can therefore locate $\ell$ zeroes $u_1,\dots,u_\ell$ of \eqref{ka} that lie in the interval
\begin{equation}\label{tlog}
 \left[-\left(t_0-\frac{t_0}{\log^3 t_0}\right), t_0 - \frac{t_0}{\log^3 t_0}\right].
\end{equation}
There is enough freedom in this selection that we can ensure the technical condition
\begin{equation}\label{tech}
 \sum_{j=1}^\ell u_j \neq \sum_{j=1}^\ell w_j.
\end{equation}
After removing singularities, the ``blended'' function
$$ G(z) \coloneqq \frac{\prod_{j=1}^\ell (z-w_j)}{\prod_{j=1}^\ell (z-u_j)} \sin\left( \pi \left(1 - \frac{1}{\log^3 t_0}\right) z\right) $$
is then in the Bernstein class ${\mathcal B}^{\infty,\R}_{\pi(1 - \frac{1}{\log^3 t_0})}$.  Informally, $G$ shares many zeroes in common with $Q$ in the interval $[-t_0,t_0]$, but behaves like the sinusoid \eqref{ka} at infinity.  More precisely, for $x \to \infty$, we have the asymptotic
$$ G(x) = \sin\left( \pi \left(1 - \frac{1}{\log^3 t_0}\right) x\right)
\left(1 + \frac{\sum_{j=1}^\ell u_j - \sum_{j=1}^\ell w_j}{x} + O\left(\frac{1}{x^2}\right)\right)$$
and so we see (using \eqref{tech}) that $G$ is not a sinusoid, and $|G(x)|$ attains its maximal value at some finite $x_{**}$ with
\begin{equation}\label{gx1}
|G(x_{**})| > 1.
\end{equation}
From \Cref{ds}(vi), we see that $x_{**}$ stays a distance strictly greater than $(1 - \frac{1}{\log^3 t_0})^{-1}/2$ from the zeroes of $G$.  As this set of zeroes contains all the integer multiples of $(1 - \frac{1}{\log^3 t_0})^{-1}$ outside of \eqref{tlog}, we conclude that $x_{**}$ lies in the $O(1)$-neighborhood of \eqref{tlog}, so in particular
\begin{equation}\label{xst}
 |x_{**}| \leq t_0 - \frac{t_0}{2\log^3 t_0}.
\end{equation}
We now localize $G$ further around $x_{**}$ (at the cost of increasing the exponential type slightly) by introducing the entire function
\begin{equation}\label{hforth}
 H(z) \coloneqq G(z) \sinc\left(\frac{z - x_{**}}{t_0^{1/3}}\right)^{\lfloor t_0^{1/6} \rfloor}.
\end{equation}
The point is that the exponential type of this function remains slightly less than $\pi$, but this function decays rapidly on the real line outside of the region $x = x_{**} + O(t_0^{1/3})$, so in particular becomes extremely small outside of $[-t_0,t_0]$.

Informally, $H$ is large at $x_{**}$ and vanishing or small at the zeroes of $Q$, while remaining ``lower degree'' than $Q$, which is in conflict with the Lebesgue interpolation formula \eqref{interp} when combined with the bounds in \Cref{good-cont}.  To make this intuition rigorous, we will rely upon the residue theorem, in the spirit of \Cref{res}.  We introduce the meromorphic function
$$\frac{H(z)}{(z-x_{**}) G(x_{**}) Q(z)}.$$
By construction, this function has a simple pole at $x_{**}$ with residue $H(x_{**})/G(x_{**}) Q(x_{**}) = 1 / Q(x_{**})$, removable singularities at $w_1,\dots,w_l$, and simple poles at the other zeroes $v_j$ of $Q$ outside of $[-t_0,t_0]$, with residue $H(v_j)/(v_j-x_{**}) G(x_{**}) Q'(v_j)$ at each such zero.  By \eqref{fz} and the pigeonhole principle, we can find $T' \asymp n^{0.1}$ such that $\pm T'$ stays at a distance $\gg 1$ from all the zeroes $v_j$ of $Q$.  From \eqref{deltax} and \Cref{good-grid}(c) we have the crude bound
\begin{equation}\label{fpt}
 |Q(\pm T')| = n^{O(1)} \frac{e^{-n\alpha} }{ |P(x_*)| }= n^{O(1)}.
\end{equation}

Inspired by \Cref{res}, we apply the residue theorem (\Cref{residue}) with $\gamma$ being the contour traversing the rectangle
$$ R([-T',T'], \log^4 t_0 \log n)$$
once anticlockwise, to conclude the identity
\begin{equation}\label{resident}
\frac{1}{2\pi i} \int_\gamma \frac{H(z)}{(z-x_{**}) G(x_{**}) Q(z)}\ dz =  \frac{1}{Q(x_{**})} + \sum_{j: t_0 < |v_j| \leq T'} \frac{H(v_j)}{(v_j - x_{**}) G(x_{**}) Q'(v_j)}.
\end{equation}

From \eqref{expt} we have a large residue at $x_{**}$:
\begin{equation}\label{ft}
 \left| \frac{1}{Q(x_{**})}\right| \gg 1.
\end{equation}

We can use \eqref{t1} to make the sum of the other residues much smaller:

\begin{lemma}[Other residues small]\label{ressum}  We have
\begin{equation}\label{reseq}
       \sum_{j: t_0 < |v_j| \leq T'} \frac{H(v_j)}{(v_j - x_{**}) G(x_{**}) Q'(v_j)}  \ll \frac{\log^5 t_0}{t_0^{10}}.
\end{equation}
\end{lemma}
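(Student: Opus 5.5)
The plan is to bound each residue using three ingredients. The damping factor in $H$ makes $H(v_j)$ tiny once $|v_j|\geq t_0$. The quantity $|G(x_{**})|>1$ from \eqref{gx1} controls the denominator. The sums $\sum 1/(|Q'(v_k)||v_k|)$ are controlled by \eqref{t1}. Accordingly I would take absolute values and bound
$$ \sum_{t_0<|v_j|\le T'} \frac{|H(v_j)|}{|v_j-x_{**}|\,|Q'(v_j)|}. $$

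\emph{Step 1: bound $H$ on the real line.} Since $G\in{\mathcal B}^{\infty,\R}_{\pi}$ attains its maximum $|G(x_{**})|$ at $x_{**}$, we have $|G(x)|\le |G(x_{**})|$ for real $x$, so $|G(v_j)|/|G(x_{**})|\le 1$. Put $N\coloneqq\lfloor t_0^{1/6}\rfloor$. For the sinc factor, use $|\sinc(u)|\le \min(1,1/|u|)$. This gives
$$ \frac{|H(v_j)|}{|G(x_{**})|}\le \min\left(1,\frac{t_0^{1/3}}{|v_j-x_{**}|}\right)^{N}. $$
By \eqref{xst}, every $|v_j|>t_0$ satisfies $|v_j-x_{**}|\ge t_0/(2\log^3 t_0)$. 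Also $|v_j-x_{**}|\gg |v_j|/\log^3 t_0$, since $|v_j|\le 2t_0$ or $|v_j|$ is larger. Hence
$$ \frac{t_0^{1/3}}{|v_j-x_{**}|}\ll \frac{\log^3 t_0\, t_0^{1/3}}{|v_j|}\le t_0^{-1/2} $$
for $t_0$ large.

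\emph{Step 2: dyadic summation using \eqref{t1}.} Split $t_0<|v_j|\le T'$ into dyadic shells $2^m t_0<|v_j|\le 2^{m+1}t_0$. On each shell, $\frac{1}{|v_j-x_{**}|}\ll \frac{\log^3 t_0}{|v_j|}$, and \eqref{t1} with $t_1=2^{m+1}t_0\le T$ bounds
$$ \sum_{|v_k|<t_1}\frac{1}{|Q'(v_k)||v_k|}\le \frac{2}{\pi}\log(2^{m+1}t_0). $$
The damping factor on the shell is at most $(C\log^3 t_0\, t_0^{1/3}/(2^m t_0))^{N}\le (t_0^{-1/2}2^{-m})^{N}$. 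So the shell contributes
$$ \ll \log^3 t_0\,\bigl(m+\log t_0\bigr)\,(t_0^{-1/2}2^{-m})^{N}. $$
Summing over $m\ge0$ gives $\ll \log^4 t_0\cdot t_0^{-N/2}$. Since $N\approx t_0^{1/6}$ is eventually much larger than $20$, this is far smaller than $\log^5 t_0/t_0^{10}$. The region $|v_j|$ up to $T'\asymp n^{0.1}\le T$ lies within the range where \eqref{t1} applies, so no further input is needed there.

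\emph{Main obstacle.} The delicate point is the geometric separation between $x_{**}$ and the $v_j$ with $|v_j|>t_0$. It relies on \eqref{xst}, which gives a gap of only $t_0/\log^3 t_0$, so the sinc factor is only barely small: its base is about $t_0^{-2/3}\log^3 t_0$. This is where the power $\lfloor t_0^{1/6}\rfloor$ is essential. A bounded power would give only polynomial gains, and the $\log^5 t_0$ loss from \eqref{t1} and the separation would then have to be tracked carefully. I would also check that $|G(v_j)|\le |G(x_{**})|$ is legitimate, i.e.\ that $x_{**}$ is a global maximizer of $|G|$ on $\R$; this holds by its definition. One could alternatively use $|Q'(v_j)|$ lower bounds, but \eqref{t1} already supplies exactly the needed summability, so I would avoid that.
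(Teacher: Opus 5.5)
Your proposal is correct and is essentially the paper's argument. Both use the separation $|v_j-x_{**}|\gg |v_j|/\log^3 t_0$ from \eqref{xst}, the bound $|G(v_j)|\le |G(x_{**})|$ together with the sinc damping in \eqref{hforth}, and then \eqref{t1} with a dyadic decomposition; the only cosmetic difference is that the paper crudely bounds the damping by $|v_j|^{-10}$, whereas you keep the full power $\lfloor t_0^{1/6}\rfloor$ and obtain a far stronger bound than is needed.
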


\begin{proof}
Because $v_j$ lies outside $[-t_0,t_0]$, we see from \eqref{xst} that $|v_j - x_{**}| \gg |v_j|/\log^3 t_0$.  From this we can bound the left-hand side of \eqref{reseq} by
$$ \ll (\log^3 t_0) \sum_{j: t_0 < |v_j| \leq T'} \frac{|H(v_j)|}{|G(x_{**})| |v_j| |Q'(v_j)|} $$
By definition of $x_{**}$ we have $|G(v_j)| \leq |G(x_{**})|$, and hence by \eqref{hforth} and crude estimation
$$ |H(v_j)| \ll \frac{|G(x_{**})|}{|v_j|^{10}} $$
(say).  We can thus bound the preceding expression by
$$ \ll (\log^3 t_0) \sum_{j: t_0 < |v_j| \leq T'} \frac{1}{|v_j|^{11} |Q'(v_j)|}.$$
The claim now follows from \eqref{t1} and dyadic decomposition.
\end{proof}

Finally, using \eqref{exp-var}, we can make the integrand negligible:

\begin{lemma}[Integrand negligible]\label{integ-lemma}  For any $z$ in $\gamma$ one has
\begin{equation}\label{integrand-bound}
      \frac{|H(z)|}{|G(x_{**})| |z-x_{**}| |Q(z)|}
\ll n^{-10}
\end{equation}
\end{lemma}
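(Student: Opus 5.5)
The plan is to split the contour $\gamma$ into its two horizontal edges $\mathrm{Im}\, z = \pm Y$, where $Y \coloneqq \log^4 t_0 \log n$, and its two vertical edges $\mathrm{Re}\, z = \pm T'$. Each piece is handled by comparing the exponential type of $H$, which is strictly less than $\pi$, with the growth $e^{\pi|y|}$ of $Q$ from \eqref{exp-var}. Throughout, $|G(x_{**})| > 1$ by \eqref{gx1}, so the factor $1/|G(x_{**})|$ can simply be dropped. Also $|z - x_{**}| \gg 1$ on all of $\gamma$: on the horizontal edges because $Y$ is large, and on the vertical edges because $|x_{**}| \leq t_0 \leq n^{0.01}$ while $T' \asymp n^{0.1}$.

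First I will bound $H$. Since $G \in {\mathcal B}^{\infty,\R}_{\pi(1-1/\log^3 t_0)}$ with sup norm $|G(x_{**})|$ on $\R$, \Cref{ds}(iii) (or \eqref{pl}) gives
$$|G(x+iy)| \leq |G(x_{**})| \cosh\bigl(\pi(1-1/\log^3 t_0) y\bigr).$$
For the sinc factor I will use $|\sinc(w)| \ll \min(1, |w|^{-1}) e^{|\Im w|}$. Raised to the power $\lfloor t_0^{1/6} \rfloor$, this contributes at most $e^{t_0^{1/6-1/3}|y|} = e^{t_0^{-1/6}|y|}$, together with a decay factor $\ll (t_0^{1/3}/|z-x_{**}|)^{\lfloor t_0^{1/6}\rfloor}$ when $|z - x_{**}| \geq t_0^{1/3}$. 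Because $t_0^{-1/6} \ll 1/\log^3 t_0$ once $t_0$ is large, the total exponent is at most $\pi(1 - \tfrac{1}{2\log^3 t_0})|y|$. Hence
$$|H(x+iy)|/|G(x_{**})| \ll e^{\pi |y| - \pi|y|/(2\log^3 t_0)}.$$

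On the horizontal edges we have $|y| = Y \geq \rho(x_*) \log n$ and $|x| \leq T' = O(n^{0.1})$, so \eqref{exp-var} gives $|Q(z)| \geq n^{-O(1)} e^{\pi Y}$. The integrand is therefore $\ll n^{O(1)} e^{-\pi Y/(2\log^3 t_0)} = n^{O(1)} e^{-\frac{\pi}{2}\log t_0 \log n}$. Since $t_0$ can be taken larger than any absolute constant, this is $n^{O(1) - \frac{\pi}{2}\log t_0} \ll n^{-10}$. The main point to verify is that the $O(1)$ exponent in \eqref{exp-var} is absolute (it depends only on $I'$), so that choosing $t_0$ large beats it. This is exactly why the height $Y$ carries the factor $\log^4 t_0$.

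On the vertical edges, $z = \pm T' + iy$ with $|y| \leq Y$, and I expect this to be the main obstacle. Here $|\pm T' - x_{**}| \gg n^{0.1}$, so the sinc power supplies a factor
$$\bigl(t_0^{1/3}/n^{0.1}\bigr)^{\lfloor t_0^{1/6}\rfloor} \leq n^{-0.09\lfloor t_0^{1/6}\rfloor},$$
which is $\ll n^{-100}$ once $t_0$ is large. This must be paired with a lower bound on $|Q(\pm T' + iy)|$ that holds for all $|y| \leq Y$, including small $|y|$. For $|y| \geq \rho(x_*)\log n$, \eqref{exp-var} again gives $n^{-O(1)}e^{\pi|y|}$. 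For $|y| < \rho(x_*)\log n$, I would use \eqref{fpt}, which gives $|Q(\pm T')| = n^{O(1)}$ as a lower bound $n^{-O(1)}$. I would then propagate this to nearby $y$ via $\log|Q(\pm T' + iy)| \geq \log|Q(\pm T')| - O(\log n)$. This follows from the product formula over roots, since $\pm T'$ is at distance $\gg 1$ from every $v_j$ and moving vertically only increases each $|z - v_j|$. Equivalently, in the unscaled picture $|P(x+i\eta)| \geq |P(x)|$ for real roots, which is the cleanest form and needs only the choice of $T'$. With $|Q| \geq n^{-O(1)}$ on these edges and $|H|/|G(x_{**})| \ll n^{-100} e^{\pi|y|}$, both regimes of $y$ give a bound $\ll n^{-10}$, completing the proof of \eqref{integrand-bound}.
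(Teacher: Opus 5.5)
Your proof is correct and follows the paper's argument. On the horizontal edges you compare the type $\pi(1-1/\log^3 t_0)$ of $G$, plus the $t_0^{-1/6}$ growth of the sinc power, against the $e^{\pi Y}$ growth of $Q$ from \eqref{exp-var}; on the vertical edges you use $|Q(\pm T'+iy)| \geq |Q(\pm T')| = n^{-O(1)}$ together with the $(t_0^{1/3}/T')^{\lfloor t_0^{1/6}\rfloor}$ decay. Your split of the vertical edges by the size of $|y|$ is unnecessary, since the vertical monotonicity already covers all $y$ and the paper simply bounds $e^{\pi|y|}\le n^{O(\log^4 t_0)}$ and absorbs it, but it does no harm.
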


\begin{proof}
First suppose that $z$ lies on the upper edge
$$ \{ x + i \log^4 t_0 \log n : -T' \leq x \leq T' \}.$$
From \eqref{exp-var} that
$$ \frac{1}{|z-x_{**}| |Q(z)|} \ll n^{O(1) - \pi \log^4 t_0}. $$
From \Cref{ds}(iii) we have
$$ |G(z)| \ll |G(x_{**})| n^{\pi \log^4 t_0 - \pi \log t_0}$$
and then by \eqref{hforth} and the crude bound
$$|\sinc(z)| = \left|\int_0^1 \cos(tz)\ dt\right| \leq \cosh(\mathrm{Im} z) \leq \exp(|\mathrm{Im} z|)$$
one has
$$ |H(z)| \ll |G(x_{**})| n^{\pi \log^4 t_0 - \pi \log t_0} \exp\left( \frac{\log^4 t_0 \log n}{t_0^{1/3}} \lfloor t_0^{1/6} \rfloor \right).$$
One can check that the final term can be absorbed in the $n^{-\pi \log t_0}$ term with room to spare, giving \eqref{integrand-bound} on the upper edge. The lower edge is treated similarly.

Now suppose $z$ lies on the right edge
$$ \{ T' + iy : -\log^4 t_0 \log n \leq y \leq \log^4 t_0 \log n \}.$$
From \eqref{fpt} that
$$ |Q(z)| \geq |Q(T')| = n^{O(1)}$$
while $|z-x_{**}| \asymp T'$.  From \Cref{ds}(iii) we have
$$ |G(z)| \ll |G(x_{**})|n^{O(\log^4 t_0)}$$
and then
$$      \frac{|H(z)|}{|G(x_{**})| |z-x_{**}| |Q(z)|}\ll \frac{n^{O(\log^4 t_0)} \exp\left( \frac{\log^4 t_0 \log n}{t_0^{1/3}} \lfloor t_0^{1/6} \rfloor \right)}{n^{O(1)} (T')^{\lfloor t_0^{1/6} \rfloor+1}}.$$
Since $T' \asymp n^{0.1}$, we readily conclude \eqref{integrand-bound}, as the $(T')^{\lfloor t_0^{1/6} \rfloor}$ denominator is more than strong enough to absorb various terms in the numerator while producing the desired gain $n^{-10}$.
\end{proof}

These two lemmas and \eqref{ft} contradict \eqref{resident} if $t_0$ (and $n$) are large enough.  This contradiction concludes the proof of \Cref{main}(i).

\section{Proof of \texorpdfstring{\Cref{main}}{Theorem~\ref{main}}(ii)}\label{integral-sec}

Now we prove \Cref{main}(ii).  Let $I_0 \subset [-1,1]$ be fixed.  We allow implied constants to depend on $I_0$.  Our task is to show that
\begin{equation}\label{io}
 \int_{I_0} \lambda(x)\ dx \geq \frac{(4-o(1))|I_0|}{\pi^2} \log n
\end{equation}

\subsection{A crude upper bound}

We first establish some crude bounds on $\lambda$ that admit polynomial losses of $n^{O(1)}$, in order to be able to access \Cref{prelim}.

Clearly we may assume without loss of generality that
$$ \int_{I_0} \lambda(x)\ dx \ll \log n.$$
By \eqref{lambda-def} this implies that
$$ \int_{I_0} |\ell_k(x)|\ dx \ll \log n$$
for each $k$.  As $\ell_k$ is a polynomial of degree $n-1$, the Nikolskii\footnote{One could also use Markov's inequality for polynomials and the fundamental theorem of calculus here.} inequality \cite{kamzolov} then gives the somewhat crude bound
$$ \sup_{x \in I_0} |\ell_k(x)| \ll n^2 \log n$$
and hence by \eqref{lambda-def} and the triangle inequality
$$ \sup_{x \in I_0} \lambda(x) \ll n^3 \log n.$$
Thus \Cref{prelim} is applicable (with $I=I_0$).  In particular, the density of state function $\rho(x)$ is now available on $I_0$.

\begin{remark}
In principle we could also apply \Cref{good-grid} to obtain an amplitude function $A(x)$, but as it turns out we will not need this function in this section; the amplitude function gives good control on $P(x_*)$ for some special points $x_*$, which is important for proving \Cref{main}(i), but is much less effective at controlling the average behavior of this polynomial, which is what is relevant for \Cref{main}(ii).  We will however introduce a quantity $P_0$ that plays a somewhat analogous role to $A(x)$ later in our arguments.
\end{remark}

\subsection{Localizing to a mesoscopic interval}

To prove \eqref{io}, it will suffice by a standard diagonalization argument to show that
$$ \int_{I_0} \lambda(x)\ dx \geq \frac{(4-o(1))|I_1|}{\pi^2} \log n$$
for any fixed $I_1 \Subset I_0$.  Henceforth we fix this choice of $I_1$, and allow implied constants to depend on $I_1$.  By \eqref{lamax}, we can write the left-hand side as
$$ \int_{I_0} \sum_{k} \frac{|P(x)|}{|x-x_k| |P'(x_k)|}\ dx.$$

It will be convenient to average away the weight $|x-x_k|$.
Let $\eps > 0$ be a sufficiently small fixed constant. Suppose we can show that
\begin{equation}\label{jo} \int_{J_0} \sum_{k: x_k \in J_0} \frac{|P(x)|}{|P'(x_k)|}\ dx \geq \frac{(2 - O(\eps))|J_0|^2}{\pi^2}\end{equation}
for any ``mesoscopic'' interval $J_0$ (which is not fixed with respect to $n$) with center in $I_1$ and length $n^\eps \leq |J_0| \leq n^{1-\eps}$, assuming $n$ sufficiently large depending on $\eps$.  Taking $J_0 = [y-r, y+r]$ (so that $2|J_0|^2 = 8r^2$), dividing both sides by $r^3$, and integrating in $r$ and $y$, we conclude that
$$ \int_{I_1} \int_{n^\eps/2}^{n^{1-\eps}/2} \int_{y-r}^{y+r} \sum_{k: x_k \in [y-r, y+r]} \frac{|P(x)|}{|P'(x_k)|}\ \frac{dx dr dy}{r^3}
 \geq \frac{(8 - O(\eps)) |I_1|}{\pi^2} \log n.$$
Note in the left-hand side that $x$ is restricted to $I_0$ (if $n$ is large enough), and one can upper bound this expression by
$$ \int_{I_0} \sum_k \frac{|P(x)|}{|P'(x_k)|} \left( \int_{|x-x_k|/2}^\infty \left( \int_{[x-r,x+r] \cap [x_k-r, x_k+r]} 1\ dy \right)\ \frac{dr}{r^3} \right)\ dx.$$
One can calculate that
$$ \int_{|x-x_k|/2}^\infty \left( \int_{[x-r,x+r] \cap [x_k-r, x_k+r]} 1\ dy \right)\ \frac{dr}{r^3} =
\int_{|x-x_k|/2}^\infty \frac{2r - |x-x_k|}{r^3}\ dr =
\frac{2}{|x-x_k|}.$$
We conclude that
$$ \int_{I_0} \sum_k \frac{|P(x)|}{|x-x_k| |P'(x_k)|}\ dx
\geq \frac{(4-O(\eps))|I_1|}{\pi^2} \log n$$
for $n$ sufficiently large depending on $\eps$, and the claim follows by letting $\eps \to 0$.

\subsection{Preliminary bounds}

Our remaining objective is to establish \eqref{jo}.  The first step is to adapt some arguments from \cite{esz} to obtain some reasonably good control (off from the truth by factors of $O(1)$) on $P$ in the interval $J_0$, which are more accurate in some respects from what one can obtain from \Cref{prelim}.

\begin{remark} The control on $P$ we will obtain here resembles somewhat the assertion that $|P(x)|$ is an $A_2$ weight in the sense of Muckenhoupt \cite{muckenhoupt}, although one should not take this analogy too literally since $1/|P(x)|$ is not quite locally integrable on the real axis (although we will later be able to obtain reasonable control on integrals of $1/|P(x+in^{-1})|$), and also we will encounter some exceptional sets where we lose some control on $P$.  In any event, we will not directly use the theory of such weights here, although experts familiar with that theory will note other similarities (for instance that $\log |P(x)|$ will behave somewhat like a function of bounded mean oscillation).
\end{remark}

We first recall a simple lemma from \cite[Lemma IV]{et3}.

\begin{lemma}[Lower bound on interpolation functions]\label{consec}  For any $1 \leq k < n$ and $x_k \leq x \leq x_{k+1}$, one has
$$ \ell_k(x) + \ell_{k+1}(x) \geq 1.$$
(See \Cref{fig-consec}.)
\end{lemma}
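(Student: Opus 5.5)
The plan is to work with the complementary quantity
$$ R(x) \coloneqq 1 - \ell_k(x) - \ell_{k+1}(x), $$
and show $R \le 0$ on $[x_k,x_{k+1}]$ by counting critical points with Rolle's theorem. By the interpolation formula \eqref{interp} with $Q=1$, we have $\sum_i \ell_i = 1$. Hence $R = \sum_{i \neq k,k+1} \ell_i$ is a polynomial of degree at most $n-1$. It satisfies $R(x_k)=R(x_{k+1})=0$ and $R(x_i)=1$ for the $n-2$ remaining nodes $i \neq k,k+1$. If $n=2$ then $R \equiv 0$ and there is nothing to prove. If $R$ is constant it must be $0$, since it vanishes at $x_k$. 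So assume $n \geq 3$ and $R$ nonconstant, which makes $R'$ a nonzero polynomial of degree at most $n-2$.

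Suppose, for contradiction, that $R(x_0) > 0$ for some $x_0 \in (x_k,x_{k+1})$. I will exhibit at least $n-1$ zeros of $R'$, counted with multiplicity, which is impossible for a nonzero polynomial of degree at most $n-2$. The zeros come from three sources.

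\emph{Rolle between equal values.} Any two consecutive nodes that lie on the same side of $[x_k,x_{k+1}]$, with both different from $x_k,x_{k+1}$, carry the value $1$. Each such pair gives a zero of $R'$ strictly between them. This yields $(k-2)_+ + (n-k-2)_+$ zeros.

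\emph{Interior maximum.} The function $R$ vanishes at $x_k$ and $x_{k+1}$ and is positive somewhere between them. So it has an interior maximum on $(x_k,x_{k+1})$, giving one more zero of $R'$.

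\emph{Near the endpoints.} If $k \geq 2$, then on $[x_{k-1},x_k]$ the function goes from $R(x_{k-1})=1$ to $R(x_k)=0$, and it is positive just to the right of $x_k$. If $R \geq 0$ on a left neighbourhood of $x_k$, then $x_k$ is a local minimum and $R'(x_k)=0$. Otherwise $R$ takes a negative value in $(x_{k-1},x_k)$, so $R$ attains a negative minimum in the interior of $[x_{k-1},x_k]$, and $R'$ vanishes there. In either case we get a zero of $R'$ in $(x_{k-1},x_k]$. The symmetric argument gives a zero of $R'$ in $[x_{k+1},x_{k+2})$ when $k+2 \leq n$.

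These zeros lie in pairwise disjoint sets: the open gaps between same-side nodes, the open interval $(x_k,x_{k+1})$, and the half-open intervals $(x_{k-1},x_k]$ and $[x_{k+1},x_{k+2})$. So they are distinct.

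Now count. In the generic case $2 \leq k \leq n-2$ we get $(n-4)+1+2 = n-1$ zeros. In the edge case $k=1$, all $n-2$ other nodes lie to the right, contributing $n-3$ Rolle zeros. Adding the interior maximum and the single endpoint zero on the right gives again $n-1$. The case $k=n-1$ is symmetric. In all cases a nonzero polynomial of degree at most $n-2$ would have $n-1$ distinct zeros, a contradiction. Hence $R \leq 0$ on $[x_k,x_{k+1}]$, which is exactly $\ell_k(x)+\ell_{k+1}(x) \geq 1$ there.

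The only step needing care is the endpoint step: the extra zero must come either from a local minimum exactly at $x_k$ or from a negative dip to its left. It must also be checked to be distinct from the Rolle zeros and the interior maximum. The disjointness of the intervals listed above is what guarantees this.
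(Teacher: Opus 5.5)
There is a gap in your endpoint step. You assert that $R$ is positive just to the right of $x_k$, but your contradiction hypothesis only gives $R(x_0)>0$ at a single point $x_0\in(x_k,x_{k+1})$. Nothing prevents $R$ from having a simple zero at $x_k$ with $R>0$ on $(x_k-\delta,x_k)$ and $R<0$ on $(x_k,x_k+\delta)$, so that it dips below zero before rising to $R(x_0)$. In that sub-case $x_k$ is not a local minimum, and there is no negative dip to its left. Indeed $R$ may be strictly decreasing on $[x_{k-1},x_k+\delta]$, so $R'$ need not vanish anywhere in $(x_{k-1},x_k]$. Your closing remark that the extra zero ``must come'' from one of two sources overlooks exactly this third possibility. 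The same problem occurs at $x_{k+1}$, which is the only endpoint step when $k=1$. The rest of your count is fine: Rolle between same-side nodes, the interior maximum, and the tally of $n-1$ in the generic and edge cases.

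The repair is easy. In the bad sub-case, $R$ attains a negative minimum on $[x_k,x_0]$ at a point of $(x_k,x_0)$. This critical point is distinct from the (positive) interior maximum, so the count survives once you add this case and redo the disjointness bookkeeping.

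A cleaner uniform fix is to apply Rolle to $S=R-c$ with $0<c<\min(1,R(x_0))$. Then $S$ is positive at $x_0$ and at the nodes other than $x_k,x_{k+1}$, and negative at $x_k,x_{k+1}$. So $S$ has zeros
\[
z_1\in(x_{k-1},x_k),\quad z_2\in(x_k,x_0),\quad z_3\in(x_0,x_{k+1}),\quad z_4\in(x_{k+1},x_{k+2}),
\]
where you omit $z_1$ if $k=1$ and $z_4$ if $k=n-1$. Now list $x_1<\dots<x_{k-1}<z_1<\dots<z_4<x_{k+2}<\dots<x_n$ and apply Rolle to each consecutive pair at which $S$ takes the same value. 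This gives $(k-2)+3+(n-k-2)=n-1$ zeros of $R'$ in disjoint open intervals (also $n-1$ in the edge cases), with no case analysis.

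For comparison, the paper's proof avoids zero counting entirely. On $[x_k,x_{k+1}]$ it writes $\ell_k+\ell_{k+1}=(ax+b)\prod_{i\neq k,k+1}|x-x_i|$ with $ax+b>0$, observes that every factor is log-concave there, and concludes from the endpoint values $1$. Your repaired Rolle argument is equally elementary but requires this extra care with signs.
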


\begin{figure}
    \centering
\includegraphics[width=0.75\textwidth]{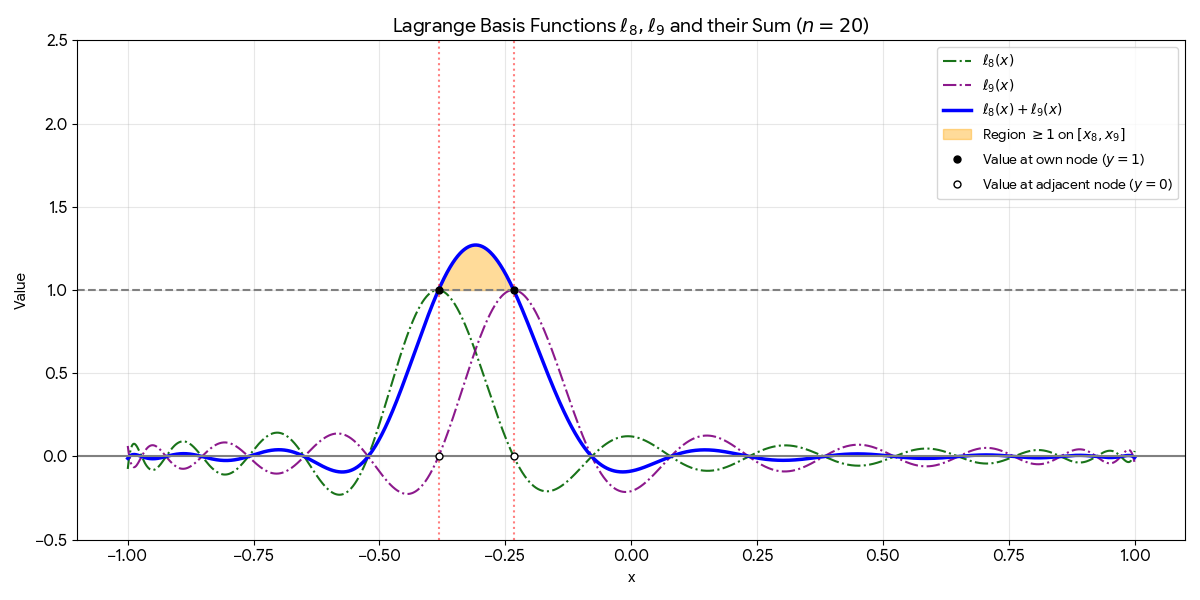}
    \caption{An illustration of \Cref{consec} with $k=8$, $n=20$, and $x_k$ the Chebyshev nodes in \Cref{cheby}. (Image generated by Gemini.)}
    \label{fig-consec}
\end{figure}

\begin{proof} For completeness we provide a proof here. By construction, $\ell_k+ \ell_{k+1}$ is a polynomial of degree at most $k-1$ that equals $1$ at $x_k, x_{k+1}$ and vanishes at all other $x_i$.  By the factor theorem, we thus have
$$\ell_k(x)+ \ell_{k+1}(x) = (ax+b) \prod_{i \neq k,k+1} |x-x_i|$$
for $x_k \leq x \leq x_{k+1}$, for some linear function $ax+b$ that is positive at $x_k$, $x_{k+1}$ and thus also for the entire interval $x_k \leq x \leq x_{k+1}$.  Every factor on the right is log-concave, thus $\ell_k + \ell_{k+1}$ is log-concave on this interval.  Since it equals $1$ at the endpoints $x_k$, $x_{k+1}$, the claim follows.
\end{proof}

\begin{corollary}\label{stosh}  For any $1 \leq k < n$, $x_k \leq x \leq x_{k+1}$, and $y \in \R$, one has
$$ |P(x+iy)| \geq \frac{\delta(x+iy)}{2} p_k$$
where
\begin{equation}\label{pk}
p_k \coloneqq \min( |P'(x_k)|, |P'(x_{k+1})| )
\end{equation}
and where $\delta(x)$ was defined in \eqref{delta-def}.  (See \Cref{fig-plower}.)
\end{corollary}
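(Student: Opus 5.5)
Fix $1 \leq k < n$, $x \in [x_k,x_{k+1}]$ and $y \in \R$. The plan is to derive the bound from \Cref{consec} by a pigeonhole step, and then pass from the real point $x$ to $x+iy$ by comparing products termwise.

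First, \Cref{consec} gives $\ell_k(x)+\ell_{k+1}(x) \geq 1$, so one of the two terms, say $\ell_j(x)$ with $j \in \{k,k+1\}$, satisfies $|\ell_j(x)| \geq 1/2$. We then use the factored form \eqref{lk-def}, or equivalently \eqref{lk-form}:
$$ \ell_j(z) = \frac{1}{P'(x_j)} \prod_{i \neq j} (z-x_i). $$
Writing $|P(x+iy)| = |x+iy-x_j| \prod_{i \neq j} |x+iy-x_i|$, we note that $|x+iy-x_i| \geq |x-x_i|$ for each $i$, since adding an imaginary part only increases the modulus. Hence
$$ |P(x+iy)| \geq |x+iy-x_j| \prod_{i\neq j}|x-x_i| = |x+iy-x_j|\,|P'(x_j)|\,|\ell_j(x)| \geq \frac{1}{2}|x+iy-x_j|\,|P'(x_j)|. $$

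To finish, we bound $|x+iy-x_j| \geq \delta(x+iy)$, which holds directly from the definition \eqref{delta-def}. We also bound $|P'(x_j)| \geq p_k$, which holds from \eqref{pk} because $j \in \{k,k+1\}$. Combining these gives $|P(x+iy)| \geq \frac{\delta(x+iy)}{2} p_k$, as claimed.

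There is no real obstacle in this argument. The only care needed is in the pigeonhole step: \Cref{consec} bounds the sum $\ell_k+\ell_{k+1}$ rather than its absolute value, and we need $|\ell_j(x)| \geq 1/2$ for a single index. This is fine, because if both terms were less than $1/2$ in absolute value their sum would be less than $1$. The other point is to keep the factor $|x+iy-x_j|$ separate and not bound it below by $|x-x_j|$. That separation is what makes the bound nondegenerate when $x$ is close to a node.
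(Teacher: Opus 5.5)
Your argument is correct and uses the same two ingredients as the paper's proof: \Cref{consec} and the termwise comparison $|x+iy-x_i| \geq |x-x_i|$. The paper first proves the case $y=0$ by bounding both terms of $\ell_k(x)+\ell_{k+1}(x)$, then passes to $x+iy$ by removing the factor of whichever of $x_k,x_{k+1}$ is closest to $x+iy$; you instead pigeonhole to a single index $j$ and remove its factor directly, which is only an organizational difference and incidentally avoids the degenerate ratio $|P(x)|/\delta(x)$ at the nodes.
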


\begin{figure}
    \centering
\includegraphics[width=0.75\textwidth]{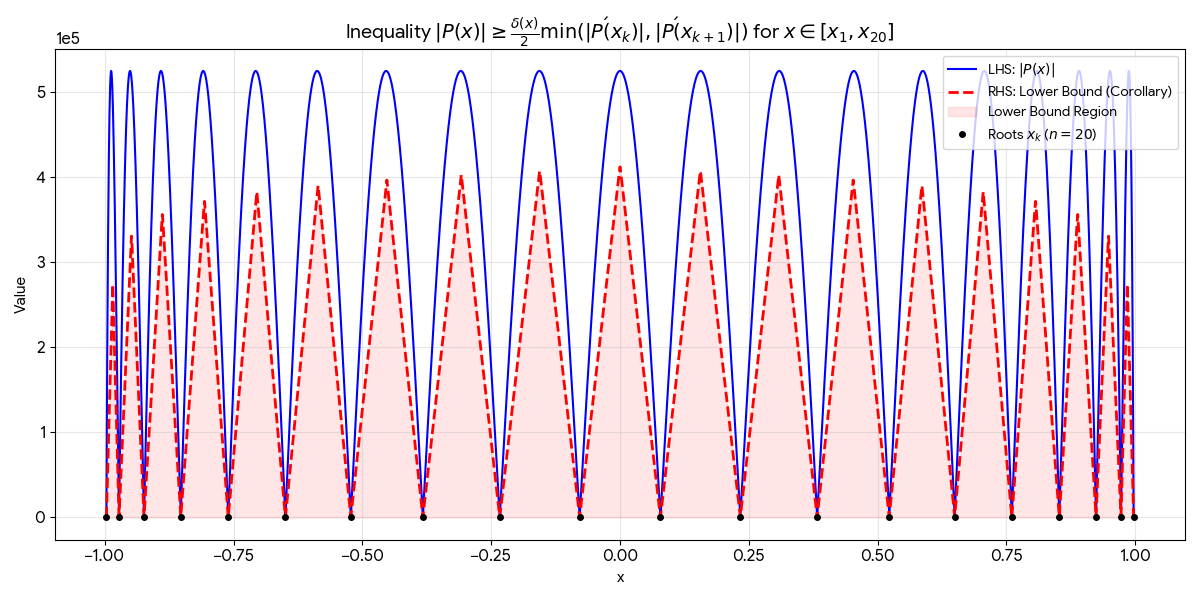}
    \caption{An illustration of \Cref{stosh} with $y=0$, $n=20$, $x_k$ the Chebyshev nodes in \Cref{cheby}, and $k$ chosen so that $x_k \leq x \leq x_{k+1}$ for $x \in [x_1,x_n]$. (Image generated by Gemini.)}
    \label{fig-plower}
\end{figure}

\begin{proof}
First suppose that $y=0$.  From \Cref{consec}, \eqref{lamax}, and the triangle inequality we have that
$$ |P(x)| \left( \frac{1}{|x-x_k| |P'(x_k)|} + \frac{1}{|x-x_{k+1}| |P'(x_{k+1})|}  \right) \geq 1.$$
Since $|x-x_k|, |x-x_{k+1}| \geq \delta(x)$ and $|P'(x_k)|, |P'(x_{k+1})| \geq p_k$, the claim follows.

Now we handle the general case.  Either $x_k$ or $x_{k+1}$ is the closest root to $x+iy$.  If $x_k$ is the closest, we have
$$ \frac{|P(x+iy)|}{\delta(x+iy)} = \prod_{k' \neq k} |x-x_{k'}+iy| \geq \prod_{k' \neq k} |x-x_{k'}| = \frac{|P(x)|}{\delta(x)}.$$
Similarly if $x_{k+1}$ is the closest. The claim follows from the already-established $y=0$ case.
\end{proof}

Now we prove \eqref{jo} for a given $J_0$ as above.  We may of course assume without loss of generality that
$$\int_{J_0} \sum_{k: x_k \in J_0} \frac{|P(x)|}{|P'(x_k)|}\ dx \ll |J_0|^2.$$
Let $x_{k_-} < \dots < x_{k_+}$ be the nodes in $J_0$, then from \eqref{pk} we have
$$\int_{J_0} \sum_{k_- \leq k < k_+} \frac{|P(x)|}{p_k}\ dx \ll |J_0|^2.$$
If we introduce the normalizing constant
\begin{equation}\label{P0-def}
P_0 \coloneqq \frac{|J_0|}{\sum_{k_- \leq k < k_+} \frac{1}{p_k}}
\end{equation}
we thus have
$$ \int_{J_0} \frac{|P(x)|}{P_0}\ dx \ll |J_0|.$$
On subdividing the integral, we conclude that
\begin{equation}\label{subdivide-2}
\sum_{k_- \leq k < k_+} \int_{x_{k}}^{x_{k+1}} \frac{|P(x)|}{P_0}\ dx \ll |J_0|.
\end{equation}
We rewrite this bound further as
\begin{equation}\label{p-2}
    \sum_{k_- \leq k < k_+} b_k a_k \Delta x_k \ll |J_0|
\end{equation}
where
\begin{equation}\label{deltaxk-def}
    \Delta x_k \coloneqq x_{k+1} - x_k
\end{equation}
is the zero spacing,
\begin{equation}\label{pk-def}
a_k \coloneqq \frac{p_k \Delta x_k}{P_0}
\end{equation}
is a normalized version of $p_k \Delta x_k$, and $b_k$ is the ratio
\begin{equation}\label{ak-def}
b_k \coloneqq \frac{\int_{x_k}^{x_{k+1}} |P(x)|\ dx}{p_k (\Delta x_k)^2}.
\end{equation}
Note from \eqref{P0-def}, \eqref{pk-def} that
\begin{equation}\label{p-inv}
    \sum_{k_- \leq k < k_+} \frac{1}{a_k} \Delta x_k = |J_0|
\end{equation}
while from \Cref{stosh} one has
\begin{equation}\label{akb} b_k \gg 1.\end{equation}
From \eqref{akb} and \eqref{subdivide-2} we conclude that
\begin{equation}\label{p-3}
    \sum_{k_- \leq k < k_+} a_k \Delta x_k \ll |J_0|.
\end{equation}
From \Cref{prelim}(ii), (iv) we know that the gap $\Delta_k$ between\footnote{In \cite[p. 192]{esz} a slightly sharper bound of $O(\log n/n)$ is established.} consecutive nodes is bounded by
\begin{equation}\label{deltak-bound} \Delta x_k \ll \frac{\log^4 n}{n}\end{equation}
when at least one of the nodes $x_k$, $x_{k+1}$ lies in $J_0$.  By telescoping series, this implies that
\begin{equation}\label{kdel}
    \sum_{k_- \leq k < k_+} \Delta x_k \asymp |J_0|.
\end{equation}
From \eqref{kdel}, \eqref{p-inv}, and Cauchy--Schwarz we can also obtain the matching lower bound to \eqref{p-3}, thus
\begin{equation}\label{p-3-improv}
    \sum_{k_- \leq k < k_+} a_k \Delta x_k \asymp |J_0|
\end{equation}
and hence by \eqref{akb}, \eqref{p-2}
\begin{equation}\label{p-2-improv}
    \sum_{k_- \leq k < k_+} b_k a_k \Delta x_k \asymp |J_0|
\end{equation}

\begin{remark}
One should think of $P_0$ as the expected size of $P(x)$ on $J_0$, and the above estimates are roughly speaking asserting that $a_k, b_k$ are comparable to $1$ for ``typical'' $k$.  In particular
$$ |P(x)| \asymp a_k P_0 \asymp P_0$$
for ``typical'' $k$ and ``typical'' $x$ in $[x_k, x_{k+1}]$.  The quantity $P_0$ then plays a role somewhat analogous to the amplitude function $A(x)$ in the proof of \Cref{main}(i).
\end{remark}

For $i=1,2,3$, define the intervals
$$ J_i \coloneqq (1-i\eps) J_0$$
so that
$$ J_3 \Subset J_2 \Subset J_1 \Subset J_0.$$
From the gap between nodes in $J_0$, we have
$$ J_1 \subset [x_{k_-}, x_{k_+}] \subset J_0$$
and hence by \eqref{p-2}, \eqref{pk-def}, \eqref{ak-def} we have
\begin{equation}\label{mark} \int_{J_1} |P(x)|\ dx \ll |J_0| P_0 \ll \int_{J_0} |P(x)|\ dx.
\end{equation}

We can relate $P_0$ to the quantity $\alpha$ appearing in \Cref{prelim}:

\begin{lemma}[Approximate value of $P_0$]\label{lem-P0} We have $n^{-1} e^{-n\alpha} \ll P_0 \ll n^{O(1)} e^{-n\alpha}$.  In particular, $\log \frac{1}{P_0} = n\alpha + O(\log n)$.
\end{lemma}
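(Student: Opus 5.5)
Both bounds come from comparing $P_0$ with $|P|$ on $J_0$ through \eqref{mark}, and then comparing $|P|$ with $e^{-n\alpha}$ through \eqref{deltax} of \Cref{prelim}(i), which applies here with $I = I_0$ since $J_0 \subset I_0$ for $n$ large.

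\emph{Upper bound.} By the upper bound in \eqref{deltax}, $|P(x)| \ll n^{O(1)} e^{-n\alpha}$ for all $x \in J_0$. Then \eqref{mark} gives
$$ |J_0| P_0 \ll \int_{J_0} |P(x)|\ dx \ll |J_0|\, n^{O(1)} e^{-n\alpha}, $$
so $P_0 \ll n^{O(1)} e^{-n\alpha}$.

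\emph{Lower bound.} Here we need $|P|$ to be not too small on a large portion of $J_1$. The lower bound in \eqref{deltax} gives $|P(x)| \geq \delta(x) e^{-n\alpha}$. We integrate over $J_1$ and use the first inequality of \eqref{mark}, $\int_{J_1}|P| \ll |J_0| P_0$. It remains to bound $\int_{J_1} \delta(x)\ dx$ from below. The nodes in $J_1$ number at most $n\mu(J_0) \ll n|J_0| + \log n$ by \eqref{mueta}, and the neighbourhoods $\{x : \delta(x) < c/n\}$ around them have total measure $\ll c(|J_0| + \log n/n)$. Since $|J_0| \geq n^{\eps}/n$ in the relevant normalization (indeed $|J_0|$ is mesoscopic, much larger than $\log n/n$), choosing $c$ small shows $\delta(x) \geq c/n$ on a subset of $J_1$ of measure $\gg |J_1| \asymp |J_0|$. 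Hence $\int_{J_1} \delta \gg |J_0|/n$, and therefore
$$ |J_0| P_0 \gg \int_{J_1}|P| \geq e^{-n\alpha} \int_{J_1} \delta \gg |J_0|\, n^{-1} e^{-n\alpha}, $$
which gives $P_0 \gg n^{-1} e^{-n\alpha}$.

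Taking logarithms, $\log\frac{1}{P_0} = n\alpha + O(\log n)$.

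The main point requiring care is the scaling of $J_0$. The text writes $n^\eps \leq |J_0| \leq n^{1-\eps}$, which is presumably in units of $1/n$, since $J_0$ must be mesoscopic inside $I_1$. The lower bound only needs $|J_0| \gg \log n/n$, so that \eqref{mueta} controls the node count, together with $J_0 \subset I' \Subset I_0$ so that \Cref{prelim}(iii) applies. If one prefers to avoid \eqref{mueta}, the gap bound \eqref{deltak-bound} does not suffice, because it bounds gaps from above rather than counting nodes. \eqref{mueta} is therefore the right tool, and with it the argument is routine.
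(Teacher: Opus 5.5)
Your proof is correct. For the lower bound it is essentially the paper's argument. The paper also combines $\int_{J_1}|P|\ll |J_0|P_0$, the lower bound in \eqref{deltax}, and a node-density estimate showing $\delta(x)\gg 1/n$ on a subset of $J_1$ of measure $\gg |J_0|$. The differences are minor: it gets the density from \eqref{muj} and \eqref{goa} where you use \eqref{mueta}, and it uses Markov's inequality to find a single point with both $|P(x)|\ll P_0$ and $\delta(x)\gg 1/n$, where you integrate $\delta$ over $J_1$. Strictly, the nodes you must count are those within $c/n$ of $J_1$, not those in $J_1$. These lie in $J_0$ because $\eps|J_0|\gg 1/n$, so your bound $n\mu(J_0)$ still covers them. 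Your reading of the mesoscopic range as $n^{\eps-1}\le |J_0|\le n^{-\eps}$ is the intended one.

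The upper bound is where you take a different route. The paper:
\begin{itemize}
\item pigeonholes, via \eqref{kdel}, a gap with $\Delta x_k\gg |J_0|/n$;
\item deduces $1/a_k\ll n$ from \eqref{p-inv};
\item applies \Cref{stosh} on that gap to get $|P(x)|\gg \delta(x)P_0/n$ there;
\item compares this with the upper bound in \eqref{deltax} at a point where $\delta(x)\asymp \Delta x_k$.
\end{itemize}
You instead take the second inequality in \eqref{mark}, $|J_0|P_0\ll \int_{J_0}|P|$, and bound the integral by $|J_0|\sup_{J_0}|P|\ll |J_0|\,n^{O(1)}e^{-n\alpha}$.

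This is legitimate, because \eqref{mark} is derived from \eqref{p-2-improv} before the lemma and without using it. It is also shorter: it gives $P_0\ll \sup_{J_0}|P|$ directly. The paper's pointwise argument instead loses factors polynomial in $n$ and $1/|J_0|$, which is harmless here.

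The underlying inputs are the same in both routes. The second half of \eqref{mark} already packages \Cref{stosh} (through $b_k\gg 1$) and the Cauchy--Schwarz bound \eqref{p-3-improv}, which comes from \eqref{p-inv} and \eqref{kdel}. The paper simply unpacks these at one well-chosen gap, while you use them in aggregate.
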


\begin{proof}  From \eqref{muj}, \eqref{goa} we see that $\delta(x) \gg 1/n$ for a set of $x$ in $J_1$ of measure $\gg |J_1| \asymp |J_0|$.  From \eqref{mark} and Markov's inequality (for integrals) one has $|P(x)| \ll P_0$ for at least one such $x$.  Combining this with \eqref{deltax} we obtain the lower bound $P_0 \gg n^{-1} e^{-n\alpha}$.

Next, from \eqref{kdel} and the pigeonhole principle one can find $k_0 \leq k < k_+$ such that $\Delta x_k \gg |J_0|/n$.  From \eqref{p-inv}, we have $\frac{1}{a_k} \ll n$ for that value of $k$, and then by
\Cref{stosh} one has $P(x) \gg \delta(x) P_0 / n$ for $x_k \leq x \leq x_{k+1}$.  Comparing this with \eqref{deltax} we obtain the upper bound $P_0 \ll n^{O(1)} e^{-n\alpha}$.
\end{proof}

We can also control $\log \frac{|P(x)|}{P_0}$ in an $L^2$ sense:

\begin{lemma}[log-$L^2$ norm of $|P(x)|/P_0$]\label{lem-log} We have
    $$ \int_{J_1} \left|\log \frac{|P(x)|}{P_0}\right|^2\ dx \ll |J_0|.$$
\end{lemma}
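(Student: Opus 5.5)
We split $\log \frac{|P(x)|}{P_0}$ into its positive and negative parts and bound the $L^2(J_1)$ norm of each separately by $O(|J_0|^{1/2})$.

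\emph{Negative part.} For each $k_- \leq k < k_+$ and $x \in [x_k, x_{k+1}]$, \Cref{stosh} (with $y=0$) and \eqref{pk-def} give
$$\frac{|P(x)|}{P_0} \geq \frac{\delta(x)\, p_k}{2P_0} = \frac{a_k}{2}\,\frac{\delta(x)}{\Delta x_k}.$$
Hence
$$\left(\log \frac{|P(x)|}{P_0}\right)_-^2 \ll 1 + \left(\log \tfrac{1}{a_k}\right)_+^2 + \log^2 \frac{\Delta x_k}{\delta(x)}.$$
We integrate this over $[x_k, x_{k+1}]$. The last term integrates to $O(\Delta x_k)$, since $\int_0^1 \log^2 \frac{1}{t}\, dt < \infty$ and $\delta(x)$ is the distance to the nearer endpoint. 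The middle term contributes $\Delta x_k (\log \frac{1}{a_k})_+^2 \ll \Delta x_k / a_k$. Summing over $k$, \eqref{p-inv} and \eqref{kdel} bound the total by $O(|J_0|)$, which handles the negative part on $[x_{k_-}, x_{k_+}] \supset J_1$.

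\emph{Positive part.} Here we want a pointwise or averaged upper bound of the form $|P(x)| \ll P_0 \cdot(\text{something with } L^2\text{-integrable logarithm})$. Markov's inequality with \eqref{mark} only gives $|\{x \in J_1 : |P(x)| > \lambda P_0\}| \ll |J_0|/\lambda$. Since $\int_1^\infty \log\lambda \,\frac{d\lambda}{\lambda^2} < \infty$, this weak-type bound already suffices:
$$\int_{J_1} \left(\log \tfrac{|P|}{P_0}\right)_+^2 dx = \int_1^\infty \frac{2\log\lambda}{\lambda}\, \Bigl|\Bigl\{x \in J_1 : \tfrac{|P(x)|}{P_0} > \lambda\Bigr\}\Bigr|\, d\lambda \ll |J_0| \int_1^\infty \frac{\log \lambda}{\lambda^2}\, d\lambda \ll |J_0|.$$
More simply, $(\log_+ t)^2 \ll t$ pointwise, so this term is bounded by $\int_{J_1} |P|/P_0 \ll |J_0|$ directly from \eqref{mark}.

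\emph{Expected obstacle.} The only real work is the negative part, specifically controlling the averages of $(\log \frac{1}{a_k})_+^2$. The key inequality is $(\log_+ \frac{1}{a})^2 \ll \frac{1}{a}$, which \eqref{p-inv} then controls. We must check that $J_1 \subset [x_{k_-}, x_{k_+}]$, so that every $x \in J_1$ lies in some interval $[x_k, x_{k+1}]$ with $k_- \leq k < k_+$; this was established just before \eqref{mark}.
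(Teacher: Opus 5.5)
Your proof is correct and follows essentially the same route as the paper: the part where $|P(x)| \geq P_0$ is handled by $(\log_+ t)^2 \ll t$ and \eqref{mark}, and the part where $|P(x)| < P_0$ is handled by \Cref{stosh}, the square-integrability of $\log \frac{\Delta x_k}{\delta(x)}$ on each gap, and then \eqref{p-inv} and \eqref{kdel}. The only difference is cosmetic: you keep just the positive part of $\log \frac{1}{a_k}$ and bound it by $\frac{1}{a_k}$, whereas the paper uses $\log^2 \frac{1}{a_k} \ll \frac{1}{a_k} + a_k$. This lets you avoid the paper's appeal to \eqref{p-3}.
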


\begin{proof}  To handle the contribution when $|P(x)| \geq P_0$, we use the pointwise upper bound
$$ \left|\log \frac{|P(x)|}{P_0}\right|^2 \ll \frac{|P(x)|}{P_0}$$
so this portion is acceptable from \eqref{mark}.  For the portion when $|P(x)| < P_0$ and $x_k \leq x < x_{k+1}$, we use \Cref{stosh} to bound
\begin{align*}
 \left|\log \frac{|P(x)|}{P_0}\right|^2 &\leq \log^2 \frac{2 \Delta x_k}{\delta(x) a_k}\\
 &\ll 1 + \log^2 \frac{1}{a_k} + \log^2 \frac{\Delta x_k}{\delta(x)} \\
 &\ll \frac{1}{a_k} + a_k + \log^2 \frac{\Delta x_k}{\delta(x)}.
\end{align*}
The third term has an integral of $O(\Delta x_k)$ on $[x_k,x_{k+1}]$.  Integrating on this interval and then summing in $k$, we see from \eqref{p-inv}, \eqref{p-3}, \eqref{kdel} that this contribution is acceptable.
\end{proof}

\subsection{Selecting a good scale}

We now use some (localized) Littlewood--Paley theory to extend this $L^2$ estimate into the upper half-plane.

\begin{lemma}[Local Littlewood-Paley estimate] We have
$$ \int_0^{n^{\eps^2-1}} \int_{J_3} \left|\frac{P'}{P}(x+iy) + \pi in\rho(x_J)\right|^2\ y dx dy \ll |J_0|$$
where $x_J$ is the center of $J_0$, and $\rho$ is defined by \eqref{rho-def} with $I$ replaced by $I_0$.
\end{lemma}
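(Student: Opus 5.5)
We will apply the Littlewood--Paley identity \eqref{littlewood-paley} to a localized version of the harmonic function $u(x+iy) \coloneqq \log \frac{|P(x+iy)|}{P_0} - \pi n \rho(x_J) y$. This function is harmonic in the upper half-plane away from the real axis. By the Cauchy--Riemann equations applied to $\log P$, we have $\partial_x u - i \partial_y u = \frac{P'}{P} + \pi i n \rho(x_J)$. Hence $|\nabla u|^2 = |\frac{P'}{P}(x+iy) + \pi i n\rho(x_J)|^2$, and this is exactly the integrand in the lemma.

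The function $u$ is not globally square-integrable, so we will localize. Fix a smooth cutoff $\psi$ adapted to $J_2$ that equals $1$ on a neighbourhood of $J_3$, with $|\psi'| \ll 1/(\eps|J_0|)$. Let $v$ be the Poisson extension of $\psi(x) \log \frac{|P(x)|}{P_0}$. By \Cref{lem-log}, this boundary datum has $L^2$ norm squared $O(|J_0|)$, so \eqref{littlewood-paley} gives $\int_0^\infty\int_\R |\nabla v|^2\, y\,dx\,dy \ll |J_0|$.

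It then suffices to show that the difference $w \coloneqq u - v$ has small gradient in the region $x \in J_3$, $0<y\le n^{\eps^2-1}$. For instance, the bound $|\nabla w(x+iy)|^2 \ll 1/(|J_0|\, y)$ would suffice, and is far more than needed: integrating over $y \le n^{\eps^2-1} \ll |J_0|$ gives $O(n^{\eps^2-1})$, which is $O(|J_0|)$.

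To get this bound, write $\log\frac{|P(x+iy)|}{P_0} = -nU_\mu(x+iy) - \log P_0$. By the Poisson representation \eqref{poisson} (justified as in \Cref{prelim}), this equals the Poisson integral of its boundary values. Splitting the boundary values via $\psi$ and $1-\psi$, we get that $w$ is the Poisson integral of $(1-\psi)\log\frac{|P|}{P_0}$, minus $\pi n\rho(x_J)y$. The first piece vanishes near $J_3$. Hence for $x\in J_3$, its $y$-derivative at small heights is controlled by the density of states: directly, $-n\partial_y U_\mu$ equals $\pi n$ times the Poisson integral of $\mu$, and \eqref{poisson-mu} gives $\pi n\rho(x) + O(\log n/y + n y)$.

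We plan to handle this more cleanly by working with $\partial_y\log|P| = -\Im \frac{P'}{P}\cdot$ (sign conventions aside), that is, $\pi n\int \Poisson_y(x-u)\,d\mu(u)$. The portion of $\mu$ coming from $J_2$ is exactly accounted for by $v$ plus a residual. The portion outside $J_2$, at distance $\gg \eps|J_0|$ from $x$, contributes a term of size at most $O(n y/(\eps|J_0|)^2)$, which is harmless. Comparing with $\pi n \rho(x_J)$ uses \eqref{goa}: for $x\in J_3$ we have $\rho(x) - \rho(x_J) = O(|J_0|)$, giving an error of $O(n|J_0|)$, and its square integrated against $y\,dy$ over $y\le n^{\eps^2-1}$ on $J_3$ gives $n^2|J_0|^2 n^{2\eps^2-2}|J_0| = o(|J_0|)$ since $|J_0| \le n^{-\eps}$.

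There is a subtlety here: the residual boundary data $\psi\log|P|$ and the log potential of $\mu$ restricted to $J_2$ do not coincide. We will therefore instead compare $u$ with $v$ via the identity $u - v = $ the Poisson extension of $(1-\psi)\log\frac{|P|}{P_0} - \pi n\rho(x_J)y$. Its gradient on $J_3\times(0,n^{\eps^2-1}]$ will be estimated by splitting the boundary term into the region $|t - x| \ge \eps|J_0|$, where the Poisson kernel derivatives are $O(y/(\eps|J_0|)^3)$ and $O(1/(\eps|J_0|)^2)$ against the logarithmic integrand (controlled in $L^2$ by \Cref{lem-log} near $J_1$, and by \eqref{umisc} and \Cref{prelim}(i) farther out).

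The main obstacle is this: the term $\pi n\rho(x_J)y$ is not produced by any bounded boundary data. It must be cancelled against the far-away part of $\log|P|$, whose Poisson extension has vertical slope about $-\pi n\rho$ only after subtracting the local oscillation. To handle this, we will use \Cref{prelim}(ii) and \eqref{umud-deriv} directly. These give the slope of $U_\mu$ at heights $y\ge 2\log n/n$ to accuracy $O(\log n/(ny)+y)$, and the errors $n\cdot(\log n/(ny))$ squared times $y$ integrate to $O(\log^2 n \log n)\,|J_0|$. This is a logarithmic loss. Avoiding it is exactly where the localized $L^2$ input of \Cref{lem-log} (rather than the pointwise \eqref{umu-bound}) must be used, so that the local part is absorbed into $v$ and only smooth far-field contributions remain in $w$.
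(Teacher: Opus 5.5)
\textbf{There is a genuine gap: you never bound $\nabla w$.} Your first step is sound and matches the paper. The identity $|\nabla u|^2 = |\frac{P'}{P} + \pi i n\rho(x_J)|^2$ is correct, and the Poisson extension $v$ of the localized data $\psi\log\frac{|P|}{P_0}$ is handled by \eqref{littlewood-paley} and \Cref{lem-log}. The piece of $(1-\psi)\log\frac{|P|}{P_0}$ lying in $J_1\setminus J_2$ is also harmless: Cauchy--Schwarz against the kernel $|x-t|^{-2}$ gives a gradient contribution $O(\eps^{-3/2}|J_0|^{-1})$, which is negligible.

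But bounding $\nabla w$ is the whole content of the lemma beyond the Littlewood--Paley identity, and your proposal does not do it.
\begin{itemize}
\item For $t\notin I_0$ the boundary data $\log\frac{|P(t)|}{P_0}$ has size $\asymp n$. So the crude Poisson-kernel bounds you list, using \eqref{umisc}, only give $|\nabla w|\ll n$. That yields $\int_0^{n^{\eps^2-1}}\int_{J_3} n^2\,y\,dx\,dy\asymp n^{2\eps^2}|J_0|$, which is too large.
\item Your fallback via \eqref{umud-deriv} loses $\log^3 n$, and it says nothing for $y<2\log n/n$. The loss is fatal: \Cref{dyadic} extracts its $\frac{1}{\log n}$ gain by pigeonholing the $O(|J_0|)$ bound over $\asymp\eps^3\log n$ dyadic scales.
\item Your closing sentence asserts that localizing via \Cref{lem-log} avoids this loss. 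It does not say how the size-$n$ far field cancels against $\pi n\rho(x_J)y$.
\item Your intermediate step splits $\mu$ into its parts inside and outside $J_2$. This conflates a decomposition of the measure with a decomposition of the boundary data, which is why it leads nowhere.
\end{itemize}

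\textbf{The missing idea: the definition \eqref{rho-def} makes the cancellation exact.} For $t\notin I_0$ write $\log\frac{|P(t)|}{P_0} = -n(U_\mu(t)-\alpha)+\log\frac{e^{-n\alpha}}{P_0}$. Then by \eqref{rho-def} (with $I=I_0$) and \Cref{lem-P0}, for $x\in J_2$ and $0<y\le 2n^{\eps^2-1}$,
$$ \int_{\R\setminus I_0}\Poisson_y(x-t)\log\frac{|P(t)|}{P_0}\,dt-\pi n\rho(x_J)y=\pi n\bigl(\rho(x+iy)-\rho(x_J)\bigr)y+O(1).$$
Here the constant term contributes only $O(y\log n)$. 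By \eqref{G-lip} the right-hand side is $O(n(y^2+|J_0|)y+1)=O(1)$, since $|J_0|\le n^{-\eps}$.

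The data on $I_0\setminus J_1$ also contributes $O(1)$. This uses the pointwise bound $\log\frac{|P(t)|}{P_0}\ll\log n+\log\frac{1}{\delta(t)}$, from \eqref{deltax} and \Cref{lem-P0}, together with the decay of $\Poisson_y$.

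Both pieces extend by odd reflection to harmonic functions on $R(J_2,2n^{\eps^2-1})$ that are bounded by $O(1)$ there. Interior gradient estimates on discs of radius $\asymp n^{\eps^2-1}$ then give $|\nabla|\ll n^{1-\eps^2}$ on $J_3\times[0,n^{\eps^2-1}]$. Hence $\int_0^{n^{\eps^2-1}}\int_{J_3}n^{2-2\eps^2}\,y\,dx\,dy\ll|J_0|$.

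This is the paper's $u_2$/$u_3$ argument. With it your outline closes; it is simplest to put all of $J_1$ into the Littlewood--Paley piece, as the paper does.
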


\begin{proof}  Similarly to \eqref{umo}, we can use the harmonicity of $\log \frac{|P(x+iy)|}{P_0}$ to write
$$ \log \frac{|P(x+iy)|}{P_0} = \int_\R \Poisson_y(x-v) \log \frac{|P(v)|}{P_0}\ dv$$
leading to the splitting
$$ \log \frac{|P(x+iy)|}{P_0} - \pi n \rho(x_J) y = u_1(x+iy) + u_2(x+iy) + u_3(x+iy)$$
where
\begin{align*}
     u_1(x+iy) &\coloneqq \int_{J_1} \Poisson_y(x-v) \log \frac{|P(v)|}{P_0}\ dv \\
     u_2(x+iy) &\coloneqq \int_{I_0 \backslash J_1} \Poisson_y(x-v) \log \frac{|P(v)|}{P_0}\ dv \\
     u_3(x+iy) &\coloneqq \int_{\R \backslash I_0} \Poisson_y(x-v) \log \frac{|P(v)|}{P_0}\ dv - \pi n \rho(x_J) y.
\end{align*}
Applying $\partial_x - i \partial_y$ (and the Cauchy--Riemann equations), we conclude from the triangle inequality that
$$ \left|\frac{P'}{P}(x+iy) + \pi in\rho(x_J)\right|^2 \ll |\nabla u_1(x+iy)|^2 + |\nabla u_2(x+iy)|^2 + |\nabla u_3(x+iy)|^2.$$
From \eqref{littlewood-paley} we have
$$ \int_0^\infty \int_\R |\nabla u_1(x+iy)|^2\ y dx dy = \frac{1}{2} \int_{J_1} \left|\log \frac{|P(x)|}{P_0}\right|^2\ dx$$
so by \Cref{lem-log} it suffices to show that
\begin{equation}\label{u2-est} \int_0^{n^{\eps^2-1}} \int_{J_3} |\nabla u_2(x+iy)|^2\ y dx dy \ll |J_0| \end{equation}
and
\begin{equation}\label{u3-est} \int_0^{n^{\eps^2-1}} \int_{J_3} |\nabla u_3(x+iy)|^2\ y dx dy \ll |J_0|.\end{equation}
As the functions $u_2(x+iy)$ and $u_3(x+iy)$ are harmonic in the rectangle $R( J_2, 2 n^{\eps^2-1} )$ (which contains a $n^{\eps^2-1}$-neighborhood of the region of integration in \eqref{u2-est}, \eqref{u3-est}), it will suffice by elliptic regularity to establish the pointwise bounds
\begin{equation}\label{ellip} u_2(x+iy), u_3(x+iy) \ll 1
\end{equation}
in this rectangle.

In the case of $u_2$, we can use \eqref{deltax} and \Cref{lem-P0} to bound
$$\log \frac{|P(v)|}{P_0} \ll \log n + \log \frac{1}{\delta(v)}$$
for $v \in I_0 \backslash J_1$, and the claim then follows (with room to spare) from the decay of $\Poisson_y$.  For $u_3$, we see from \eqref{rho-def}, \eqref{umu-def}, and \Cref{lem-P0} (and again using the decay of $\Poisson_y$) that
\begin{align*}
 u_3(x+iy) &= \pi n \rho(x+iy) y - \pi n \rho(x_J) y + \int_{\R \backslash I_0} \Poisson_y(x-v) \log \frac{e^{-n\alpha}}{P_0}\ dv\\
&= \pi n (\rho(x+iy) - \rho(x_J)) y + O(1).
\end{align*}
The claim then follows from \eqref{G-lip}.
\end{proof}

By subdividing $y$ into dyadic blocks and using the pigeonhole principle, we can now locate a good scale $\eta$ for the ordinate $y$, for which $P(x+iy)$ behaves like a plane wave:

\begin{corollary}[Good scale for ordinate]\label{dyadic}  There exists $\eta$ in the range
\begin{equation}\label{eta-range}
    n^{\eps^3/2-1} \leq \eta \leq n^{\eps^3-1}
\end{equation}
such that
$$ \int_{\eta/2}^{2\eta} \int_{J_3} \left|\frac{P'}{P}(x+iy) + \pi in\rho(x_J)\right|^2\ dx dy \ll_\eps \frac{|J_0|}{\eta \log n}.$$
\end{corollary}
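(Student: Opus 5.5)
We obtain \Cref{dyadic} from the local Littlewood--Paley estimate by a dyadic pigeonhole argument. Write
$$F(x+iy) \coloneqq \left|\frac{P'}{P}(x+iy) + \pi i n \rho(x_J)\right|^2.$$
The local Littlewood--Paley estimate then reads
$$\int_0^{n^{\eps^2-1}} \int_{J_3} F(x+iy)\ y\, dx\, dy \ll |J_0|.$$
Since $F \geq 0$, we may discard the part of the $y$-integral outside $[n^{\eps^3/2-1}/2,\ 2n^{\eps^3-1}]$. For $n$ large this range lies inside $(0, n^{\eps^2-1}]$, because $\eps^3 < \eps^2$.

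\emph{Step 1 (choosing the blocks).} Consider the scales $\eta_j \coloneqq 4^j n^{\eps^3/2-1}$ for $0 \leq j \leq N$, where $N$ is the largest integer with $\eta_N \leq n^{\eps^3-1}$. Then $N \asymp \log(n^{\eps^3/2}) \asymp \eps^3 \log n$. The blocks $[\eta_j/2, 2\eta_j]$ have disjoint interiors, since $2\eta_j = \eta_{j+1}/2$, and all of them lie in the retained range. On the $j$-th block we have $y \asymp \eta_j$, so
$$\sum_{j=0}^{N} \eta_j \int_{\eta_j/2}^{2\eta_j} \int_{J_3} F\ dx\, dy \ll \sum_{j=0}^N \int_{\eta_j/2}^{2\eta_j}\int_{J_3} F\ y\, dx\, dy \ll |J_0|.$$

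\emph{Step 2 (pigeonhole).} There are $N+1 \gg_\eps \log n$ nonnegative terms summing to $O(|J_0|)$, so at least one index $j$ satisfies
$$\eta_j \int_{\eta_j/2}^{2\eta_j}\int_{J_3} F\, dx\, dy \ll \frac{|J_0|}{N+1} \ll_\eps \frac{|J_0|}{\log n}.$$
Taking $\eta \coloneqq \eta_j$, which lies in the range \eqref{eta-range} by construction, and dividing by $\eta$ gives the claim.

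There is no serious obstacle here. The only points to check are that the dyadic blocks fit inside the range controlled by the Littlewood--Paley lemma, and that the number of blocks is $\gg_\eps \log n$; both hold since $\eps^3 < \eps^2$ and $n$ is large depending on $\eps$.
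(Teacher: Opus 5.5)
Your proposal is correct and is exactly the paper's argument. The paper obtains the corollary by subdividing $y$ into dyadic blocks within the stated range and pigeonholing the local Littlewood--Paley estimate; you carry this out in detail, and the $\asymp \eps^3 \log n$ blocks account for the $\ll_\eps$ dependence.
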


The point here is the gain of $\frac{1}{\log n}$ over the ``natural'' bound of $O(|J_0|/\eta)$.

Henceforth $\eta$ is chosen as in \Cref{dyadic}.
Observe from the mean-value property of harmonic functions that if
$$ \left|\frac{P'}{P}(x+i\eta) + \pi in\rho(x_J)\right| \geq \frac{1}{\eta \log^{1/3} n}$$
for some $x \in J_4$, then
$$ \int_{\eta/2}^{2\eta} \int_{[x-\eta,x+\eta]} \left|\frac{P'}{P}(x'+iy) + \pi in\rho(x_J)\right|^2\ dx dy \ll \frac{1}{\log^{2/3} n}.$$
We conclude that the bound
\begin{equation}\label{ej}
    \left|\frac{P'}{P}(x+i\eta) + \pi in\rho(x_J)\right| < \frac{1}{\eta \log^{1/3} n}
\end{equation}
holds for all $x \in J_4$ outside of the union of $O_\eps( \frac{|J_0|}{\eta \log^{1/3} n} )$ intervals of length $O(\eta)$.  Informally, this means that $P(x+i\eta)$ locally behaves like a scalar multiple of the plane wave $e^{-\pi i n\rho(x_J) x}$ for $x \in J_4$ outside of a small exceptional set.

\subsection{Localizing to a microscale}

We need to prove \eqref{jo}, which we write as
\begin{equation}\label{jo2}
 \left( \int_{J_0} \frac{|P(x)|}{P_0}\ dx \right) \left( \sum_{k: x_k \in J_0} \frac{P_0}{|P'(x_k)|} \right) \geq \frac{(2-O(\eps))}{\pi^2} |J_4|^2.
\end{equation}
We consider a smooth partition of unity
$$ 1 = \sum_{j \in \Z} \varphi_j(x)$$
where each $\varphi_j$ is smooth, takes values in $[0,1]$, and is supported on an interval $K_j$ of length $\eta \log^{1/6} n$ and obeys the derivative bound
\begin{equation}\label{derivb}
 \varphi'_j(t) \ll (\eta \log^{1/6} n)^{-1}
\end{equation}
for all $t$, with the $K_j$ boundedly overlapping in $j$.  Call an index $j$ \emph{good} if $K_j \Subset J_4$ and \eqref{ej} holds for all $x \in K_j$, and \emph{bad} otherwise.  From the above discussion we see that there are only $O_\eps( \frac{|J_0|}{\eta \log^{1/3} n} )$ bad indices $j$ for which $K_j$ intersects $J_4$.  In particular, if $j$ is restricted to good indices, then $\sum_j \varphi_j(x)$ agrees with $1$ outside of a set of measure $O_\eps( \frac{|J_0|}{\eta \log^{1/6} n} )$, which implies that
\begin{equation}\label{stim}
\int_\R \sum_j \varphi_j(x)\ dx = |J_4| - O_\eps\left( \frac{|J_0|}{\log^{1/6} n} \right) = (1-O(\eps)) |J_4| .
\end{equation}

We will establish the following bounds, analogous to \Cref{trig-lem}:

\begin{lemma}\label{newtrig}\
    \begin{itemize}
\item[(i)] ($L^1$ lower bound) We have
\begin{equation}\label{a-low}
\sum_j \int_\R \frac{|P(x)|}{P_0} \varphi_j(x)\ dx \geq \frac{4 - O(\eps)}{\pi} \int_\R \frac{|P(x+i\eta)|}{P_0 e^{\pi n \eta \rho(x_J)}} \sum_j \varphi_j(x)\ dx + O_\eps\left( \frac{|J_0|}{\log^{1/6} n} \right)
\end{equation}
\item[(ii)] (Lower bound on reciprocal of derivatives) We have
\begin{equation}\label{b-low}
\sum_j \sum_k \frac{P_0}{|P'(x_k)|} \varphi_j(x_k) \geq \frac{1}{2\pi} \int_\R \frac{e^{\pi n \eta \rho(x_J)} P_0}{|P(x+i\eta)|} \sum_j \varphi_j(x)\ dx + O_\eps\left( \frac{|J_0|\log\log n}{\log^{1/6} n} \right)
\end{equation}
\end{itemize}
In both estimates the sum is over good $j$.
\end{lemma}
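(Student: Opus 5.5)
The plan is to prove both parts by imitating the proof of \Cref{trig-lem}. For each good $j$ we freeze $P$ on the line $\mathrm{Im}\, z=\eta$ to a plane wave. We then move the contour between the real axis and the lines $\mathrm{Im}\, z = \pm\eta$ using the weighted residue theorem (\Cref{residue-weight}), with the non-holomorphic cutoff $\varphi_j$ built into the weight. Write $\lambda \coloneqq \pi n \rho(x_J)$. Let $j$ be good, and fix a base point $x_j \in K_j$. Since $|K_j| = \eta\log^{1/6} n$, integrating \eqref{ej} along $K_j+i\eta$ gives
$$P(x+i\eta) = c_j e^{-i\lambda x}\bigl(1+O(\log^{-1/6} n)\bigr), \qquad x\in K_j,$$
for a complex constant $c_j$ with $|c_j| = |P(x_j+i\eta)|e^{\lambda\eta\cdot 0}$ up to the same error. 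Note that $\lambda\eta \geq n^{\eps^3/2}$ is huge. We choose a phase $\phi_j$ with $e^{i\phi_j}c_j = |c_j|$.

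For (i), we use the Fej\'er bound \eqref{fourier-sum-fejer} to write
$$|P(x)| \geq P(x)\, \frac{4}{\pi}\sum_{m \text{ odd}} \Bigl(1-\frac mM\Bigr)_+ (-1)^{(m-1)/2}\frac{\cos(m(\lambda x+\phi_j))}{m}.$$
We then write each cosine as a real part of $e^{\pm im(\lambda x+\phi_j)}$. We multiply by $\varphi_j$ and integrate, so it suffices to evaluate $\int_\R P(x)\varphi_j(x) e^{im(\lambda x+\phi_j)}\,dx$. For this we apply \Cref{residue-weight} to the entire function $P$ on the rectangle $K_j\times[0,\eta]$, with weight $w(x+iy) = \varphi_j(x)e^{im(\lambda (x+iy)+\phi_j)}$. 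The vertical sides vanish because $\varphi_j$ vanishes there. On the upper edge, the plane-wave approximation shows that only $m=1$ survives. That term gives $e^{-\lambda\eta}|c_j|\int\varphi_j$. The terms with $m\geq 3$ carry an extra factor $e^{-(m-1)\lambda\eta}$ and are negligible. The conjugate exponentials $e^{-im(\cdot)}$ are handled by shifting downward instead, using $P(\bar z)=\overline{P(z)}$. Summing over $j$ and letting $M\to\infty$ produces the main term $\frac4\pi\int |P(x+i\eta)|e^{-\lambda\eta}\varphi_j/P_0$. The remaining error is the area term $\frac1\pi\int P\,\partial_{\bar z}w$, which is bounded by
$$\int_0^\eta\!\!\int_{K_j} |P(x+iy)|\,e^{-m\lambda y}|\varphi_j'(x)|\,dx\,dy.$$
By \eqref{derivb}, this gains the factor $\eta/|K_j| = \log^{-1/6} n$ over $\int |P(x+i\cdot)|e^{-\lambda\cdot}$. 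Hence we need an averaged bound $\int_{K_j}|P(x+iy)|e^{-\lambda y}\,dx \ll P_0|K_j|$, uniformly for $0\le y\le \eta$. I would get this from subharmonicity of $\log|P(z)e^{i\lambda z}|$ on the strip, together with the bounds on both edges: \eqref{mark} on the real axis and the plane-wave form at height $\eta$. An alternative route is to integrate the local Littlewood--Paley estimate from $y$ up to $\eta$. After summing in $j$, this error is absorbed into $O_\eps(|J_0|\log^{-1/6}n)$.

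For (ii), we follow the second half of \Cref{trig-lem}. By the triangle inequality,
$$\sum_k \frac{\varphi_j(x_k)}{|P'(x_k)|} \geq -\mathrm{Im}\sum_k \varphi_j(x_k)\frac{e^{i(\lambda x_k-\phi_j)}}{P'(x_k)},$$
after adjusting the phase appropriately. The right-hand side is a sum of residues of $w/P$ with $w(z) = \varphi_j(x)e^{i(\lambda z-\phi_j)}$, taken over the rectangle $K_j\times[-\eta,\eta]$. On the lower edge, $P(x-i\eta) = \overline{c_j}e^{i\lambda x}(1+o(1))$ and $w = e^{\lambda\eta}e^{i\lambda x}\varphi_j\,e^{-i\phi_j}$, so the ratio is essentially the constant $e^{\lambda\eta}/|c_j|$. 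This gives the main term $\frac1{2\pi}\int e^{\lambda\eta}\varphi_j/|P(x+i\eta)|$. On the upper edge the integrand is smaller by a factor $e^{-2\lambda\eta}$ and is negligible. The area term is
$$\frac1\pi\int_{K_j\times[-\eta,\eta]}\frac{|\varphi_j'(x)|\,e^{-\lambda y}}{|P(x+iy)|}\,dA.$$
Here $1/P$ has only simple real poles, so this is integrable. To bound it, I would use \Cref{stosh}, which gives $1/|P(x+iy)| \le 2\Delta x_k/(a_k P_0\,\delta(x+iy))$. The logarithmic singularity $\int dy/\delta$ costs a factor of $\log$ of the ratio $\eta/\Delta x_k$; since $\eta \le n^{\eps^3-1}$, this ratio is polynomially bounded in $n$, hence the factor is $O(\log n)$. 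That is too lossy as stated. So I expect to split $y$ into the range $|y|\le \Delta x_k$, where \Cref{stosh} is used, and $|y|\geq\Delta x_k$, where the plane-wave and harmonic-measure control of $\log|P|$ is used. The contribution of $1/a_k$ is summed via \eqref{p-inv}. This should yield the $\log\log n/\log^{1/6} n$ loss.

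The main obstacle, in both parts, is controlling $P$ (respectively $1/P$) uniformly in the intermediate region $0<|y|<\eta$. The good-scale information only holds on the line $\mathrm{Im}\, z=\eta$ and in an averaged sense. I would first try interpolating the plane-wave bound at height $\eta$ with the real-axis information, using subharmonicity and \Cref{harm-lemma} on the thin rectangle, since this rectangle has aspect ratio $\log^{1/6} n$.
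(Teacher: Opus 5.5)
Your skeleton is sound and matches the paper's: the Fej\'er square-wave bound, \Cref{residue-weight} on $K_j\times[0,\eta]$ (resp.\ $K_j\times[-\eta,\eta]$), the main term read off the shifted line, and the $m\geq 3$ terms and the opposite edge discarded because they carry an extra factor of at least $e^{-2\lambda\eta}$. Freezing the phase to $\lambda x+\phi_j$ instead of using the exact $\theta(x)$ of \eqref{polar} is a harmless variant. It makes $\partial_{\bar z}w$ involve only $\varphi_j'$, at the price of a relative error $O(\log^{-1/6}n)$ in the main term; in (ii) that error must be made additive using an a priori bound on $\int_{J_4} e^{\lambda\eta}P_0/|P(x+i\eta)|\,dx$.

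The genuine gap is the step you flag yourself. You need $\int_{J_4}|P(x+iy)|e^{-\lambda y}\,dx\ll P_0|J_0|$ and $\int_{J_4}P_0e^{\lambda y}/|P(x+iy)|\,dx\ll|J_0|\log\log n$ for $0<y\leq\eta$. Only these versions integrated over $J_4$ hold; a per-$K_j$ bound is not available. Interpolating on a rectangle of height $\eta$ cannot give them, because nothing controls the size of the data on its top edge. The condition \eqref{ej} constrains only $P'/P$: it gives the phase and the near-constancy of $|P(x+i\eta)|$ on each good $K_j$, but says nothing about $|c_j|e^{-\lambda\eta}/P_0$. The only size information at height $\eta$ is \eqref{umud} with \Cref{lem-P0}, which gives $|P(x+i\eta)|e^{-\lambda\eta}/P_0=n^{O(1)}$. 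The top edge of a height-$\eta$ rectangle carries harmonic measure $y/\eta$, so this costs a factor $n^{O(y/\eta)}$, which is fatal for $y\asymp\eta$. An $L^1$ bound on the top edge would avoid this, but it is itself an instance of the estimate you are proving. Integrating the Littlewood--Paley estimate controls $\log|P|$ only in $L^2$ and still needs the size at height $\eta$.

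The missing idea is to use a rectangle much taller than $\eta$. Take $R^+([a_-,a_+],n^{\eps^2-1})$ with $J_3\Subset[a_-,a_+]\Subset J_2$ and $\delta(a_\pm)\gg n^{-2}$, and apply harmonic measure to $u=\frac1n\log\frac{|P|}{P_0}-\pi y\rho(x_J)$ rather than to $|P|$.
\begin{itemize}
\item On the top edge \eqref{umud} gives $u\ll\frac{\log n}{n}$. From height $y\leq\eta$ that edge has harmonic measure at most $n^{\eps^3-\eps^2}$, so it contributes only $O(1/n)$.
\item On the sides one only has $u\ll\log n$, but \Cref{harm-lemma}(iii) gives them harmonic measure $O(e^{-cn^{\eps-\eps^2}})$. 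Working with $e^{nu}$ directly fails here, since $e^{O(n)}e^{-cn^{\eps-\eps^2}}$ is not small.
\end{itemize}
This yields $u(x+iy)\leq\int\Poisson_y(x-v)\max(u(v),0)\,dv+O(1/n)$. Exponentiating, applying Jensen and integrating in $x$ reduces (i) to \eqref{mark}. For (ii), the same argument with $-u$, on the rectangle shifted to start at height $n^{-1}$, reduces matters to bounding $\int_{a_-}^{a_+}P_0/|P(v+in^{-1})|\,dv$.

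Relatedly, your use of \Cref{stosh} in (ii) overlooks the weight $e^{\lambda|y|}$ in the area integrand, which reaches $e^{\lambda\eta}\geq e^{cn^{\eps^3/2}}$. \Cref{stosh} is usable only where this weight is $O(1)$, so the cut must be at $|y|=n^{-1}$, not at $\Delta x_k$. By \eqref{deltak-bound} a gap may be $\asymp\log^4 n/n$, which makes $e^{\lambda\Delta x_k}$ superpolynomial, and a $k$-dependent cut leaves no rectangle for the harmonic-measure step. With the cut at $n^{-1}$, \eqref{ko} gives contributions $\ll\frac{\Delta x_k}{a_k}\max(1,\log(n\Delta x_k))$, times $n^{-1}$ for the strip $0\leq y\leq n^{-1}$. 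Then \eqref{deltak-bound} together with \eqref{p-inv} produces exactly the $\log\log n$ loss.
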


From \Cref{newtrig}(i), \eqref{mark} we have (assuming $n$ sufficiently large depending on $\eps$)
$$ \int_{J_0} \frac{|P(x)|}{P_0}\ dx \geq \frac{4 - O(\eps)}{\pi} \int_\R \frac{|P(x+i\eta)|}{P_0 e^{\pi n \eta \rho(x_J)}} \sum_j \varphi_j(x)\ dx.$$
Similarly, from \Cref{newtrig}(ii), \eqref{p-inv}
$$ \sum_{k: x_k \in J_0} \frac{P_0}{|P'(x_k)|} \geq \frac{1 - O(\eps)}{2\pi} \int_\R \frac{e^{\pi n \eta \rho(x_J)} P_0}{|P(x+i\eta)|} \sum_j \varphi_j(x)\ dx.$$
By Cauchy--Schwarz, we can then lower bound the left-hand side of \eqref{jo2} by
$$
\frac{2 - O(\eps)}{\pi^2} \left( \int_\R \sum_j \varphi_j(x)\ dx \right)^2.$$
which gives \eqref{jo2} thanks to \eqref{stim}.

It remains to establish \Cref{newtrig}.

\subsection{Lower bounding the \texorpdfstring{$P$}{P} integral}

We now prove \Cref{newtrig}(i), mimicking the proof of \Cref{trig-lem}(i).  We write $P(x+i\eta)$ in polar coordinates as
\begin{equation}\label{polar}
    P(x+i\eta) = |P(x+i\eta)| e^{i\theta(x)}
\end{equation}
for some smooth $\theta \colon J_1 \to \R$ (well-defined up to a multiple of $2\pi$).  From \eqref{ej} and the chain rule we have
\begin{equation}\label{thet} |\theta'(x) + \pi n \rho(x_J)| \ll \frac{1}{\eta \log^{1/3} n}
\end{equation}
for all $x \in 2K_j$.

Let
\begin{equation}\label{M-def}
    M \coloneqq \lfloor 1/\eps \rfloor.
\end{equation}
From \Cref{lem-square} and the triangle inequality we have
\begin{align*}
\int_\R \frac{|P(x)|}{P_0} \varphi_j(x)\ dx &\geq \frac{4}{\pi} \sum_{m \text{ odd}} \frac{(-1)^{(m-1)/2} (1-\frac{m}{M})_+}{m} \int_\R \frac{P(x)}{P_0} \cos(m\theta(x)) \varphi_j(x)\ dx\\
&= \frac{4}{\pi} \Re \sum_{m \text{ odd}} \frac{(-1)^{(m-1)/2} (1-\frac{m}{M})_+}{m} \int_\R \frac{P(x)}{P_0} e^{-im\theta(x)} \varphi_j(x)\ dx.
\end{align*}
Using \Cref{residue-weight}, we have for every $m$ in the above sum that
\begin{align*} \int_\R \frac{P(x)}{P_0} e^{-im\theta(x)} \varphi_j(x)\ dx &= \int_\R \frac{P(x+i\eta)}{P_0} e^{-\pi m n\eta \rho(x_J)} e^{-im\theta(x)} \varphi_j(x)\ dx \\
\quad &+ 2i \int_0^\eta \int_\R \frac{P(x+iy)}{P_0} \partial_{\bar z} (e^{-\pi mn y \rho(x_J)} e^{-i m \theta(x)} \varphi_j(x))\ dx dy.
\end{align*}
In the sum
$$\sum_{m \text{ odd}} \frac{(-1)^{(m-1)/2} (1-\frac{m}{M})_+}{m} \frac{P(x+i\eta)}{P_0} e^{-\pi mn \eta \rho(x_J)} e^{-\pi im\theta(x)},$$
the $m=1$ term can be evaluated using \eqref{polar}, \eqref{M-def} as
$$ \frac{|P(x+i\eta)|}{P_0 e^{\pi n \eta \rho(x_J)}} (1 - O(\eps)),$$
while the contribution of the $m > 1$ terms can be estimated using the triangle inequality as
$$ \ll_\eps \frac{|P(x+i\eta)|}{P_0 e^{3 \pi n \eta \rho(x_J)}}.$$
By \eqref{goa}, \eqref{polar}, this latter term can be absorbed into the error of the $m=1$ term.
To prove \Cref{newtrig}(i), it thus suffices by the triangle inequality to establish the bound
\begin{align*}
    &\sum_j \int_0^\eta \int_\R \frac{|P(x+iy)|}{P_0} |\partial_{\bar z} (e^{-\pi mn y \rho(x_J)} e^{-i m \theta(x)} \varphi_j(x))|\ dx dy \\
    &\quad \ll_\eps |J_0| \log^{-1/6} n
\end{align*}
for each odd $1 \leq m \leq M$.  From the product rule and \eqref{thet}, \eqref{derivb}, the quantity
$$\partial_{\bar z} (e^{-\pi mn y \rho(x_J)} e^{-i m \theta(x)} \varphi_j(x))$$
is supported on the region $x \in K_j$ and bounded by $O_\eps(e^{-\pi n y \rho(x_J)} / \eta \log^{1/3} n)$, so it will suffice to show that
$$ \sum_j \int_0^\eta \int_{K_j} \frac{|P(x+iy)|}{P_0} e^{-\pi n y \rho(x_J)}\ dx dy \ll |J_0| \eta.$$

By the bounded overlap of the $K_j$ and the triangle inequality, it suffices to show that
$$ \int_{J_4} \frac{|P(x+iy)|}{P_0} e^{-\pi n y \rho(x_J)}\ dx \ll |J_0|$$
for each $0 < y \leq \eta$.

To estimate this, we use the pigeonhole principle to find an interval $J_3 \Subset [a_-, a_+] \Subset J_2$ with $\delta(a_-), \delta(a_+) \gg n^{-2}$.  We work with the function $u(x+iy) \coloneqq -U_\mu(x+iy) - \frac{1}{n} \log P_0 - \pi y \rho(x_J)$, which is harmonic in the upper half-plane, and in particular on $R^+([a_-,a_+], n^{\eps^2-1})$.  On the lower edge of this rectangle we have
\begin{equation}\label{udip} u(x) = \frac{1}{n} \log \frac{|P(x)|}{P_0}
\end{equation}For $a_\pm+iy$ in the left or right edges of this rectangle, we have the crude bounds
$$ n^{-O(n)} \ll |P(a_\pm+iy)| \ll e^{O(n)}$$
and hence (by \Cref{lem-P0}, \eqref{alpha})
$$ u(a_\pm+iy) \ll \log n.$$
Finally, for $x + i n^{\eps^2-1}$ on the upper edge of the rectangle, we can use \eqref{umud} and \Cref{lem-P0} to conclude that
$$ u(x + i n^{\eps^2-1}) \ll \frac{\log n}{n}.$$
Applying\footnote{Technically, there is an issue because $u$ has (mild) logarithmic singularities on the lower edge of the rectangle, but this can be dealt with by a standard limiting argument, shifting $u$ by $i\delta$ for a small $\delta>0$ and then taking limits as $\delta \to 0$ using dominated convergence.} \eqref{harmonic}, \Cref{harm-lemma}, and the triangle inequality, we obtain the bound
\begin{equation}\label{ubound}
 u(x+iy) \leq \int_{a_-}^{a^+} \Poisson_y(x-v) \max(u(v),0)\ dv + O\left(\frac{1}{n}\right)
\end{equation}
(with some room to spare in the error term).  Exponentiating this, we conclude that
$$ \frac{|P(x)|}{P_0} e^{-\pi n y \rho(x_J)} \ll \exp\left( \int_{a_-}^{a^+} \Poisson_y(x-v) n\max(u(v),0)\ dv\right)$$
and hence by Jensen's inequality (extending $u(v)$ by zero outside of $[a_-, a_+]$ to make the Poisson kernel have total mass one)
$$ \frac{|P(x)|}{P_0} e^{-\pi n y \rho(x_J)} \ll 1 + \int_{a_-}^{a^+} \Poisson_y(x-v) \exp(n\max(u(v),0))\ dv.$$
Integrating this in $x$, and using the bound $\exp(n\max(u(v),0)) \ll 1 + \frac{|P(v)|}{P_0}$, we conclude that
$$ \int_{J_4} \frac{|P(x+iy)|}{P_0} e^{-\pi n y \rho(x_J)}\ dx \ll |J_0| + \int_{a_-}^{a_+} \frac{|P(v)|}{P_0}\ dv,$$
and the claim follows from \eqref{mark}. This completes the proof of \Cref{newtrig}(i).

\subsection{Lower bounding the \texorpdfstring{$P'$}{P'} sum}

We now prove \Cref{newtrig}(ii); the proof will be somewhat similar to that in the previous section, and also inspired by the proof of \Cref{trig-lem}(ii).  Let $\theta$ and $a_+, a_-$ be as in the previous subsection.  For each good $j$, we apply \Cref{residue-weight} to conclude that
\begin{equation}\label{lime}
\begin{split}
&    \frac{1}{2\pi i} \int_\R \frac{P_0 e^{-\pi n \eta \rho(x_J)} e^{i\theta(x)} \varphi_j(x)}{P(x-i\eta)}\ dx - \frac{1}{2\pi i} \int_\R \frac{P_0 e^{\pi n \eta \rho(x_J)} e^{i\theta(x)} \varphi_j(x)}{P(x+i\eta)}\ dx\\
& \quad = \sum_k \frac{P_0}{P'(x_k)} e^{i\theta(x_k)} \varphi_j(x_k) + \frac{1}{\pi} \int_{-\eta}^{\eta} \int_\R \frac{P_0 \partial_{\bar z} (e^{\pi n y \rho(x_J)} e^{i \theta(x)} \varphi_j(x)}{P(x+iy)}\ dx dy.
\end{split}
\end{equation}
From \eqref{polar} one has
$$ \int_\R \frac{P_0 e^{\pi n \eta \rho(x_J)} e^{i\theta(x)} \varphi_j(x)}{P(x+i\eta)}\ dx = \int_\R \frac{P_0 e^{\pi n \eta \rho(x_J)} \varphi_j(x)}{|P(x+i\eta)|}\ dx.$$
Meanwhile, from \eqref{umud}, \eqref{umu-def}, and \Cref{lem-P0} we have
$$ \frac{1}{n} \log \frac{P_0}{|P(x-i\eta)|} = - \pi \eta \rho(x_J) + O\left( \frac{\log n}{n} \right)$$
and thus
$$ \frac{P_0}{|P(x-i\eta)|} = n^{O(1)} e^{-\pi n \eta \rho(x_J)}$$
which implies (using \eqref{goa} and the triangle inequality) that
$$ \int_\R \frac{P_0 e^{\pi n \eta \rho(x_J)} e^{i\theta(x)} \varphi_j(x)}{P(x-i\eta)}\ dx \ll n^{O(1)} e^{-cn} $$
for some absolute constant $c>0$.  Finally, using \eqref{ej}, \eqref{derivb}, and the Leibniz rule we see that $\partial_{\bar z}( e^{\pi n y \rho(x_J)} e^{i\theta(x)} \varphi_j(x) )$ vanishes unless $x \in K_j$, in which case this expression is $O( \frac{e^{\pi n y \rho(x_J)}}{\eta \log^{1/6} n} )$. Taking imaginary parts in \eqref{lime} and applying the triangle inequality, we conclude that
\begin{align*}
\sum_k \frac{P_0}{|P'(x_k)|} \varphi_j(x_k) &\geq \frac{1}{2\pi} \int_\R \frac{P_0 e^{\pi n \eta \rho(x_J)} \varphi_j(x)}{|P(x+i\eta)|}\ dx\\
&\quad - O\left( \frac{1}{\eta \log^{1/6} n} \int_{-\eta}^{\eta} \int_{K_j} \frac{P_0 e^{\pi n y \rho(x_J)}}{|P(x+iy)|}\ dx dy \right) \\
&\quad - O(n^{O(1)} e^{-cn}  ).
\end{align*}
Summing in $j$, and using the bounded overlap of the $K_j$, we will be done if we can show that
$$\int_{-\eta}^{\eta} \int_{J_4} \frac{P_0 e^{\pi n y \rho(x_J)}}{|P(x+iy)|}\ dx dy \ll \eta |J_0| \log\log n.$$
From \eqref{fsymm} it suffices to show that
\begin{equation}\label{jp}
    \int_{0}^{\eta} \int_{J_4} \frac{P_0 e^{\pi n y \rho(x_J)}}{|P(x+iy)|}\ dx dy \ll \eta |J_0| \log\log n.
\end{equation}
A technical difficulty here is that $\frac{1}{|P(x+iy)|}$ ceases to be locally integrable in $x$ in the limiting case $y=0$, although it remains locally integrable jointly in $x,y$.  To get around this issue we shall treat the contribution of small $y$ separately. Our arguments will be slightly inefficient and lose a factor of $O(\log\log n)$, but we can afford to lose as much as $\log^{1/6} n$ in \eqref{jp} over the expected bound of $O(|J_0|)$, so we will still be able to close the argument.

We first deal with the local contribution when $0 \leq y \leq n^{-1}$.  Here, the $e^{\pi ny \rho(x_J)}$ term is bounded thanks to \eqref{goa}. If we let $x_{k_-} < \dots < x_{k_+}$ be the nodes in $J_3$, then we may bound this contribution by
$$\ll \sum_{k_- \leq k < k_+} \int_0^{n^{-1}} \int_{x_k}^{x_{k+1}} \frac{P_0}{|P(x+iy)|}\ dx dy.$$
By \Cref{stosh} and \eqref{pk-def} one has
\begin{equation}\label{ko} \frac{P_0}{|P(x+iy)|} \ll \frac{\Delta x_k}{a_k} \left( \frac{1}{|x-
x_k+iy|} + \frac{1}{|x-x_{k+1}+iy|} \right)
\end{equation}
and hence by straightforward calculation
$$ \int_0^{n^{-1}} \int_{x_k}^{x_{k+1}} \frac{P_0}{|P(x+iy)|}\ dx dy \ll \frac{\Delta x_k}{na_k}  \max(1, \log(n \Delta x_k)).$$
By \eqref{deltak-bound} one has $1 + \log(n \Delta x_k) \ll \log\log n$.  Using \eqref{p-inv}, we thus see that the size of this contribution is $O( |J_0| \log\log n / n)$, which is acceptable.

Now we deal with the global contribution when $n^{-1} < y \leq \eta$.  It will suffice to show that
$$ \int_{J_4} \frac{P_0}{|P(x+iy)|} e^{\pi n y \rho(x_J)}\ dx \ll |J_0| \log\log n$$
for each such $y$.

We work on the shifted rectangle $R^+([a_-,a_+], n^{\eps^2-1}) \backslash R^+([a_-,a_+], n^{-1})$ (to avoid the poles of $\frac{1}{P}$).
Arguing as in the proof of \eqref{ubound} (but now on the shifted rectangle, and with the function $u$ defined by \eqref{udip} replaced by $-u$), we have
$$ -u(x+iy) \leq \int_{a_-}^{a^+} \Poisson_{y-n^{-1}}(x-v) \max(-u(v+in^{-1}),0)\ dv + O\left(\frac{1}{n}\right)$$
and thus
$$ \frac{P_0}{|P(x+iy)|} e^{\pi n y \rho(x_J)} \ll \exp\left(\int_{a_-}^{a^+} \Poisson_{y-n^{-1}}(x-v) n\max(-u(v+in^{-1}),0)\ dv\right)$$
Using Jensen's inequality and integrating as before, and using $\exp(n\max(-u(v+in^{-1}),0)) \ll 1 + \frac{P_0}{|P(v+in^{-1})|}$, we obtain
$$ \int_{J_4} \frac{P_0}{|P(x+iy)|} e^{\pi n y \rho(x_J)}\ dx \ll |J_0| + \int_{a_-}^{a_+} \frac{P_0}{|P(v+in^{-1})|}\ dv,$$
so it suffices to show that
$$\int_{a_-}^{a_+} \frac{P_0}{|P(x+in^{-1})|}\ dx \ll |J_0| \log\log n.$$
Here we argue as in the local case.  By \eqref{ko} we can bound the left-hand side by
$$ \ll \sum_{k_0 \leq k < k_+} \int_{x_k}^{x_{k+1}} \frac{\Delta x_k}{a_k} \left( \frac{1}{|x-x_k+in^{-1}|} + \frac{1}{|x-x_{k+1}+in^{-1}|} \right)\ dx.$$
Performing the integral and using \eqref{p-inv} and \eqref{deltak-bound}, one can bound this by
$$ \ll \sum_{k_0 \leq k < k_+} \frac{\Delta x_k}{a_k} \log(n \Delta x_k) \ll |J_0| \log\log n.$$
This completes the proof of \Cref{newtrig}(ii), and hence of \Cref{main}(ii).

\bibliographystyle{amsplain}

\end{document}